
\documentclass{amsart}



\usepackage{amssymb}
\usepackage{cite}      


\newtheorem{thm}          {Theorem}     [section]
\newtheorem{prop}         {Proposition} [section]
\newtheorem{lemma}        {Lemma}	[section]

\newtheorem{cor}          {Corollary}   [section]

\newtheorem{rem}          {Remark}	[section]



\newcommand{\RR}{\mathbb{R}}


\newcommand{\dist}{\operatorname{dist}}

\newcommand{\diam}{\operatorname{diam}}

\renewcommand{\div}{\operatorname{div}}

\newcommand{\e}{{\rm e}}



\newcommand{\eps}{\varepsilon}

\newcommand{\bs}{\boldsymbol}





\newcommand{\be}{
\begin{equation}
}
\newcommand{\bel}[1]{
\begin{equation}
\label{#1}}
\newcommand{\ee}{
\end{equation}
}
\newtheorem{subn}{\name}

\newcommand{\bsn}[1]{\def\name{#1}
\begin{subn}}
\newcommand{\esn}{
\end{subn}}
\newtheorem{sub}{\name}[section]
\newcommand{\es}{
\end{sub}}
\newcommand{\bsl}[1]{
\begin{sub}\label{#1}}
\newcommand{\bth}[1]{\def\name{Theorem}
\begin{sub}\label{t:#1}}
\newcommand{\blemma}[1]{\def\name{Lemma}
\begin{sub}\label{l:#1}}
\newcommand{\bcor}[1]{\def\name{Corollary}
\begin{sub}\label{c:#1}}
\newcommand{\bdef}[1]{\def\name{Definition}
\begin{sub}\label{d:#1}}
\newcommand{\bprop}[1]{\def\name{Proposition}
\begin{sub}\label{p:#1}}

\newcommand{\BA}{
\begin{array}}
\newcommand{\EA}{
\end{array}}
\newcommand{\BAN}{\renewcommand{\arraystretch}{1.2}
\setlength{\arraycolsep}{2pt}
\begin{array}}
\newcommand{\BAV}[2]{\renewcommand{\arraystretch}{#1}
\setlength{\arraycolsep}{#2}
\begin{array}}
\newcommand{\BSA}{
\begin{subarray}}
\newcommand{\ESA}{
\end{subarray}}
\newcommand{\BAL}{
\begin{aligned}}
\newcommand{\EAL}{
\end{aligned}}
\newcommand{\BALG}{
\begin{alignat}}
\newcommand{\EALG}{
\end{alignat}}
\newcommand{\BALGN}{
\begin{alignat*}}
\newcommand{\EALGN}{
\end{alignat*}}
\newcommand{\note}[1]{\textit{#1.}\hspace{2mm}}

\newcommand{\Remark}{\note{Remark}}



\newcommand{\abs}[1]{\left |#1\right |}
\newcommand{\norm}[1]{\left \|#1\right \|}


\def\angb<#1>{\langle #1 \rangle}


\newcommand{\myfrac}[2]{{\displaystyle \frac{#1}{#2} }}
\newcommand{\myint}[2]{{\displaystyle \int_{#1}^{#2}}}



\newcommand{\prt}{
\partial}


\def\ga{\alpha}     \def\gb{\beta}       
\def\gth{\theta}                         
\def\gf{\phi}           
            \def\gl{\lambda}
        \def\gn{\nu}         
            
\def\gs{\sigma}       \def\gt{\tau}
      
                \def\gz{\zeta}
\def\Gg{\Gamma}     \def\Gd{\Delta}      \def\Gf{\Phi}

\def\Gw{\Omega}              


   \def\CB{{\mathcal B}}

\def\CJ{{\mathcal J}}


   \def\BBR {\mathbb R}



\numberwithin{equation}{section}


\begin{document}

\title{Isolated boundary singularities of semilinear 
 elliptic equations}

\author{Marie-Fran\c{c}oise Bidaut-V\'eron}
\address{
Universit\'e Fran\c cois Rabelais\hfill\break\indent
Laboratoire de Math\'ematiques et Physique Th\'eorique (UMR CNRS 6083)\hfill\break\indent F\'ed\'eration Denis Poisson\hfill\break\indent
37200 Tours, France} 

\author{Augusto C.\@ Ponce}
\address{
Universit\'e catholique de Louvain\hfill\break\indent 
D\'epartement de math\'ematique\hfill\break\indent 
Chemin du Cyclotron 2\hfill\break\indent 
1348 Louvain-la-Neuve, Belgium} 

\author{Laurent V\'eron}
\address{
Universit\'e Fran\c cois Rabelais\hfill\break\indent
Laboratoire de Math\'ematiques et Physique Th\'eorique (UMR CNRS 6083)\hfill\break\indent F\'ed\'eration Denis Poisson\hfill\break\indent
37200 Tours, France} 

\date{\today}

\begin{abstract}
Given a smooth domain $\Omega\subset\RR^N$ such that $0 \in \partial\Omega$ and given a nonnegative smooth function $\zeta$ on $\partial\Omega$, we study the behavior near $0$ of positive solutions of $-\Delta u=u^q$ in $\Omega$ such that $u = \zeta$ on $\partial\Omega\setminus\{0\}$. We prove that if $\frac{N+1}{N-1} < q < \frac{N+2}{N-2}$, then $u(x)\leq C \abs{x}^{-\frac{2}{q-1}}$ and we compute the limit of $\abs{x}^{\frac{2}{q-1}} u(x)$ as $x \to 0$. We also investigate the case $q= \frac{N+1}{N-1}$. The proofs rely on the existence and uniqueness of solutions of related equations on spherical domains.
\end{abstract}

\maketitle
\tableofcontents


\section{Introduction and main results}\label{sec1}

Let $\Omega$ be a smooth open subset of $\RR^N$, with $N \geq 2$, such that
$0 \in \partial\Omega$. Given $q>1$ and $\zeta \in C^\infty(\partial\Omega)$ with $\zeta \ge 0$ on $\partial\Omega$, consider the problem
\begin{equation}\label{E1}
\left\{
\begin{alignedat}{2}
-\Delta u & = u^q && \quad\mbox{in } \Omega,\\
u & \geq 0 && \quad\mbox{in } \Omega,\\
u & = \zeta && \quad\mbox{on } \partial\Omega\setminus\{0\}.
\end{alignedat}
\right.
\end{equation}  
By a solution of \eqref{E1} we mean a function $u\in C^2(\Omega) \cap
C(\overline\Omega\setminus\{0\})$ which satisfies \eqref{E1} in the classical sense.
A solution may develop an isolated singularity at $0$. Our main goal in this paper is to describe the behavior of $u$ in a neighborhood of this point.

\medskip
In the study of boundary singularities of \eqref{E1}, one finds three critical exponents; namely,
\begin{equation*}
q_1= \tfrac{N+1}{N-1}, \quad  q_2= \tfrac{N+2}{N-2} \quad \text{and} \quad q_3= \tfrac{N+1}{N-3},
\end{equation*}
with the usual convention if $N=2$ or $N=3$. When $1<q<q_1$, it is proved by Bidaut-V\'eron--Vivier~\cite{BidViv:00} that for every solution
$u$ of \eqref{E1} there exists $\alpha \geq 0$ (depending on $u$) such that 
\begin{equation*}
u(x)=\alpha \abs x^{-N} \dist(x,\partial\Omega) \, \big(1+o(1) \big)\quad \mbox{as } x\to 0.
\end{equation*}
In this paper we mainly investigate the case $q_1 \le q < q_3$. 

\medskip
The counterpart of \eqref{E1} for an interior singularity,
\[
-\Gd u = u^{q} \quad \text{in } \Omega\setminus\{x_0\},
\]
where $x_0 \in \Omega$, was studied by P.-L.~Lions~\cite{Lio:80} in the subcritical case $1<q < \frac{N}{N-2}$, by Aviles~\cite{Avi:87} when $q = \frac{N}{N-2}$ and by Gidas-Spruck~\cite{GidSpr:81} in the range  $\frac{N}{N-2} < q < \frac{N+2}{N-2}$. We prove some counterparts of the works of Gidas-Spruck and Aviles in the framework of boundary singularities.

When \eqref{E1} is replaced by an equation with an absortion term,
 \begin{equation}\label{E2}
-\Gd u+u^{q}=0 \quad \text{in } \Omega,
\end{equation}
the problem has been first adressed by Gmira-V\'eron~\cite{GmiVer:91} (and later to nonsmooth domains in \cite{FabVer:96}). These results are important in the theory of boundary trace of positive solutions of \eqref{E2} which was developed by Marcus-V\'eron~\cite{MarVer:98,MarVer:98a,MarVer:01} using analytic tools and by Le Gall~\cite{Leg:95} and Dynkin-Kuznetsov~\cite{DynKuz:96,DynKuz:98} with a probabilistic approach. We refer the reader to V\'eron~\cite{Ver:81} for the case of interior singularities of \eqref{E2}.

\medskip

Let us first consider the case where $\Omega$ is the upper-half space $\RR^N_+$, and we look
for solutions of \eqref{E1} of the form
\[
u(x) = {|x|}^{-\frac{2}{q-1}} \omega\big(\tfrac{x}{|x|}\big).
\]
By an easy computation, $\omega$ must satisfy
\begin{equation}\label{En3}
\left\{
\begin{alignedat}{2}
-\Delta'\omega & =\ell_{N,q}\omega + \omega^q && \quad \mbox{in }S^{N-1}_+,\\
\omega & \geq 0 && \quad\mbox{in } S^{N-1}_+,\\
\omega & = 0 && \quad\mbox{on }\partial S^{N-1}_+,
\end{alignedat}
\right.
\end {equation}  
where $\Delta'$ denotes the Laplace-Beltrami operator in the unit sphere $S^{N-1}$, 
\[
\ell_{N,q}= \tfrac{2(N-q(N-2))}{(q-1)^2} \quad \text{and} \quad
S^{N-1}_+ = S^{N-1} \cap \RR^N_+.
\]

Concerning equation \eqref{En3}, we prove

\begin{thm}\label{thm1.0} 
\quad
\begin{itemize}
\item[$(i)$] If $1<q\leq q_1$, then \eqref{En3} admits no
  positive solution.
\item[$(ii)$] If $q_1<q< q_3$, then \eqref{En3} admits a unique
positive solution.
\item[$(iii)$] If $q\geq q_3$, then \eqref{En3} admits no positive solution.
\end{itemize}
\end{thm}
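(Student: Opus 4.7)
The analysis hinges on the fact that $\ell_{N,q_1}=\lambda_1:=N-1$, the first Dirichlet eigenvalue of $-\Delta'$ on $S^{N-1}_+$ (with positive eigenfunction $\phi_1(\sigma)=\sigma_N$), as one checks by direct computation. Moreover, $q\mapsto\ell_{N,q}$ is strictly decreasing on $(1,q_2)$, increasing on $(q_2,\infty)$, and tends to $0^-$ at infinity, so $\ell_{N,q}>\lambda_1$ precisely when $q\in(1,q_1)$. This comparison drives the trichotomy.

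\textit{Part $(i)$.} Multiplying \eqref{En3} by $\phi_1$ and integrating by parts, both functions vanishing on the equator, one obtains
\[
(\lambda_1-\ell_{N,q})\int_{S^{N-1}_+}\omega\,\phi_1\,d\sigma=\int_{S^{N-1}_+}\omega^q\,\phi_1\,d\sigma.
\]
The right-hand side is strictly positive for any positive $\omega$, while the left-hand side is nonpositive when $q\leq q_1$; this is a contradiction, and in the borderline case $q=q_1$ it forces $\omega\equiv 0$.

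\textit{Part $(ii)$, existence.} For $q_1<q<q_3$, I would apply the Mountain Pass Lemma to
\[
J(\omega)=\tfrac{1}{2}\int_{S^{N-1}_+}\bigl(|\nabla'\omega|^2-\ell_{N,q}\omega^2\bigr)\,d\sigma-\tfrac{1}{q+1}\int_{S^{N-1}_+}\omega_+^{q+1}\,d\sigma
\]
on $H^1_0(S^{N-1}_+)$. The assumption $q>q_1$ yields $\ell_{N,q}<\lambda_1$ and hence coercivity of the quadratic part, while $q<q_3$ is equivalent to $q+1<\frac{2(N-1)}{N-3}$, the Sobolev critical exponent on the $(N-1)$-dimensional manifold $S^{N-1}_+$, so $H^1_0\hookrightarrow L^{q+1}$ is compact and the Palais--Smale condition holds. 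The mountain-pass critical point is nonnegative by testing against its negative part (using $\ell_{N,q}<\lambda_1$) and strictly positive by the strong maximum principle.

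\textit{Part $(ii)$ uniqueness, and Part $(iii)$.} For uniqueness I plan to first show that any positive solution depends only on the polar angle $\theta\in(0,\pi/2)$ measured from $e_N$, either by a moving-planes argument on $S^{N-1}_+$ using reflections across great circles perpendicular to $e_N$, or equivalently by transporting via stereographic projection from $-e_N$ to $B_1\subset\RR^{N-1}$ (a conformal diffeomorphism onto a star-shaped domain) and invoking Gidas--Ni--Nirenberg on the pushed-forward equation. The problem then reduces to the ODE
\[
-\tfrac{1}{\sin^{N-2}\theta}\bigl(\sin^{N-2}\theta\,\omega'\bigr)'=\ell_{N,q}\omega+\omega^q,\qquad \omega'(0)=0,\;\omega(\pi/2)=0,
\]
whose positive solution is unique by a shooting and energy-monotonicity argument of Kwong--Ni--Nussbaum type. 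For Part $(iii)$ I would use the same stereographic projection: setting $\omega(\sigma)=\bigl(\tfrac{2}{1+|y|^2}\bigr)^{(N-3)/2}v(y)$ transforms \eqref{En3} into $-\Delta v=A(|y|)v+B(|y|)v^q$ in $B_1$ with $v=0$ on $\partial B_1$, and the Pohozaev identity obtained by multiplying by $y\cdot\nabla v$ on the star-shaped ball gives a bulk term whose sign rules out $v>0$ exactly when $q\geq q_3$ (at $q=q_3$ the $v^q$-contribution scales with the dilation vector field, matching the boundary term and forcing $v\equiv 0$). The main obstacle is the uniqueness step, where both the symmetrization on the sphere and the 1D ODE analysis require careful execution.
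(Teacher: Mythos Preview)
Your proposal is essentially correct and follows the paper's strategy closely in parts $(i)$ and $(ii)$, with only a cosmetic variant for existence: the paper obtains a positive solution by minimizing $\int(|\nabla'w|^2-\ell_{N,q}w^2)$ on the constraint $\int (w^+)^{q+1}=1$ rather than by Mountain Pass, but both arguments rest on exactly the two facts you isolate ($\ell_{N,q}<N-1$ for coercivity, and $q+1$ subcritical for compactness). For uniqueness the paper proceeds as you outline---radial symmetry via moving planes on the sphere (citing Padilla), followed by a Kwong/Kwong--Li type ODE analysis---but be aware that the one-dimensional argument is considerably more delicate than a single energy-monotonicity estimate: the paper handles the cases $N=2$, $N\ge 4$, and $N=3$ by three separate arguments, the last one requiring an extra algebraic check that $\ell_{N,q}$ falls in the admissible range.

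The genuine difference is in part $(iii)$. The paper does \emph{not} pass through stereographic projection; instead it proves a Poho\v zaev identity directly on $S^{N-1}_+$ by testing against the tangential vector field $\nabla'\phi$, where $\phi(\sigma)=\sigma_N$ is the first eigenfunction. This yields an identity with constant coefficients in which the bulk terms are visibly nonnegative precisely when $q\ge q_3$ \emph{and} $\ell\le -\tfrac{N-1}{q-1}$ (a condition one then verifies for $\ell=\ell_{N,q}$), while the boundary term is nonpositive because $\langle\nabla'\phi,\nu\rangle<0$ on the equator. Your route through stereographic projection should also work, but the transformed equation acquires radially varying coefficients $A(|y|),\,B(|y|)$, and the Euclidean Poho\v zaev computation then requires checking the signs of $y\cdot\nabla A$ and $y\cdot\nabla B$; the paper's intrinsic approach avoids this bookkeeping entirely. (Minor point: your conformal substitution $\omega=\Phi^{(N-3)/2}v$ has the exponent inverted relative to the standard conformal Laplacian transformation; this is easily fixed but worth getting right before you compute.)
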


In Section~\ref{sec8} we study uniqueness of solutions of \eqref{En3} with $\ell_{N,q}$ replaced by any $\ell \in \RR$. The proofs are inspired from some interesting ideas taken from Kwong~\cite{Kwo:91} and Kwong-Li~\cite{KwoLi:92}. The nonexistence of solutions of \eqref{En3} when $q \ge q_3$ is based on a Poho\v zaev identity for spherical domains; see Theorem~\ref{poth} below.

\medskip

We now consider the case where $\Omega \subset \RR^N$ is a smooth domain such that $0 \in \partial\Omega$. Without loss of generality, we may assume that $- \boldsymbol \e_N$ is the outward unit normal vector of $\partial\Omega$ at $0$. We prove the following classification of isolated singularities of solutions of \eqref{E1}:

\begin{thm}\label{thm1.2}
Assume that $q_{1} < q < q_{2}$. If $u$ satisfies \eqref{E1}, then either $u$ can be continuously extended at $0$ or for every $\eps > 0$ there exists $\delta > 0$ such that if $x \in \Omega\setminus\{0\}$, $\frac{x}{|x|} \in S_+^{N-1}$ and $|x|< \delta$,
\begin{equation}\label{E6}
\Big||x|^{\frac{2}{q-1}} u(x) - \omega \big( \tfrac{x}{|x|} \big) \Big| < \eps,
\end{equation}
where $\omega$ is the unique positive solution of \eqref{En3}.
\end{thm}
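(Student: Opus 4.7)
The strategy has three ingredients: a sharp upper bound $u(x)\leq C|x|^{-2/(q-1)}$ near the singular point, an Emden--Fowler reduction to an autonomous dynamical system on the spherical cap $S^{N-1}_+$, and the uniqueness result of Theorem~\ref{thm1.0}. For the first, I would argue by the doubling lemma of Pol\'a\v cik--Quittner--Souplet: if the bound failed along a sequence $x_n\to0$, one rescales around nearby maximum-type points $y_n$ to produce uniformly bounded positive solutions
\begin{equation*}
 v_n(z)=u(y_n)^{-1}u\bigl(y_n+u(y_n)^{-(q-1)/2}z\bigr)
\end{equation*}
of $-\Delta v_n=v_n^q$. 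The limit is a nontrivial positive classical solution of $-\Delta v=v^q$ either on the whole space $\RR^N$ or on a half-space with zero Dirichlet datum, according as $u(y_n)^{(q-1)/2}\dist(y_n,\partial\Omega)$ tends to $\infty$ or remains bounded. Both scenarios are ruled out by the Gidas--Spruck Liouville theorems and their half-space counterparts, valid in the subcritical range $1<q<q_2$.

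After flattening $\partial\Omega$ near $0$ by a smooth diffeomorphism fixing the origin, I set $\alpha=2/(q-1)$, $t=-\log|x|$, $\sigma=x/|x|$, and $w(t,\sigma)=|x|^{\alpha} u(x)$. A direct calculation converts the equation into
\begin{equation*}
 w_{tt}+c_{N,q}\,w_t+\Delta' w+\ell_{N,q}w+w^q=\mathcal{R}(t,\sigma;w,\nabla w),
\end{equation*}
where $c_{N,q}=2\alpha+2-N=\bigl(2(q+1)-N(q-1)\bigr)/(q-1)$ is strictly positive \emph{precisely because $q<q_2$}, and $\mathcal{R}=O(e^{-t})$ is an error term arising from the flattening. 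The boundary condition becomes $w(t,\cdot)=e^{-\alpha t}\zeta(e^{-t}\cdot)\to0$ uniformly on $\partial S^{N-1}_+$ as $t\to\infty$.

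By the upper bound, $w$ is uniformly bounded, and boundary Schauder estimates upgrade this to $C^{2,\gamma}$ bounds uniform in $t$. Every sequence $t_n\to\infty$ therefore admits, along a subsequence, $w(t_n+\cdot,\cdot)\to W$ in $C^2_{\rm loc}(\RR\times\overline{S^{N-1}_+})$, with $W$ satisfying the unperturbed autonomous equation and zero boundary trace. Dissipation of the Lyapunov functional
\begin{equation*}
 \mathcal{E}(s)=\int_{S^{N-1}_+}\!\Bigl[\tfrac12 W_s^2+\tfrac12|\nabla' W|^2-\tfrac{\ell_{N,q}}{2}W^2-\tfrac{1}{q+1}W^{q+1}\Bigr]d\sigma,\qquad \mathcal{E}'(s)=-c_{N,q}\!\int_{S^{N-1}_+}\!W_s^2\,d\sigma,
\end{equation*}
combined with the uniform bound on $W$, forces $W_s\equiv 0$, so $W$ is a bounded nonnegative solution of \eqref{En3}. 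Theorem~\ref{thm1.0} then gives $W\equiv 0$ or $W\equiv\omega$; connectedness of the $\omega$-limit set singles out a single alternative for every sequence. If it is $\omega$, \eqref{E6} follows directly. If it is $0$, then $|x|^{\alpha} u(x)\to0$, and a standard removable-singularity argument (available because $q<q_2$) shows that $u$ extends continuously to~$0$.

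The heaviest technical input is the a priori bound. The required half-space Liouville theorem for $1<q<q_2$, together with the analysis of blow-up points at arbitrary rescaled distance from $\partial\Omega$, is delicate; once it is in hand, the remainder reduces to now-classical dynamical-systems arguments, with the sign $c_{N,q}>0$ being exactly where the assumption $q<q_2$ enters for the second time.
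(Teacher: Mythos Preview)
Your outline matches the paper's proof almost step for step: the a~priori bound via the doubling lemma and half-space Liouville theorem, the Emden--Fowler change of variables $t=-\log|x|$, the energy/Lyapunov argument to force the $\omega$-limit set into the set of stationary solutions of \eqref{En3}, Theorem~\ref{thm1.0} plus connectedness to pin down a single limit, and the removable-singularity result when that limit is zero.

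Two small corrections. First, your Lyapunov functional has the wrong signs: multiplying the limit equation by $W_s$ and integrating gives
\[
\frac{d}{ds}\int_{S^{N-1}_+}\Bigl[\tfrac12 W_s^2-\tfrac12|\nabla' W|^2+\tfrac{\ell_{N,q}}{2}W^2+\tfrac{1}{q+1}W^{q+1}\Bigr]\,d\sigma=-c_{N,q}\int_{S^{N-1}_+}W_s^2\,d\sigma,
\]
not the expression you wrote (the $|\nabla' W|^2$ term must carry the minus sign). Second, the removable-singularity step (Theorem~\ref{prop6.1} in the paper) hinges on $q>q_1$, equivalently $\ell_{N,q}<N-1$, which is what makes the linearized comparison ODE have a root strictly below $-\tfrac{2}{q-1}$; it is not a consequence of $q<q_2$. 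Relatedly, the dissipation argument only needs $c_{N,q}\neq 0$ (i.e.\ $q\neq q_2$), not $c_{N,q}>0$, which is why the paper's Proposition~\ref{cv1} also applies for $q_2<q<q_3$ once the a~priori bound is assumed.
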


When $q_2 < q < q_3$, we have a similar conclusion provided $u$ satisfies the estimate
\[
u(x)\leq C|x|^{-\frac{2}{q-1}} \quad \forall x\in \Gw,
\]
for some constant $C > 0$; see Proposition~\ref{cv1} below. In the critical case $q=q_{1}$ there is a superposition of the linear and nonlinear effects since their characteristic exponents $\frac{2}{q-1}$ and $N-1$ coincide. The counterpart of Theorem~\ref{thm1.2} in this case is the following:

\begin{thm}\label{thm1.2a}
Assume that $q= q_1$. If $u$ satisfies \eqref{E1}, then either $u$ can be continuously extended at $0$ or for every $\eps > 0$ there exists $\delta > 0$ such that if $x \in \Omega\setminus\{0\}$ and $|x|< \delta$,
\begin{equation}\label{E6a}
\Big||x|^{N-1} \big(\log{\tfrac{1}{|x|}} \big)^{\frac{N-1}{2}} u(x) - \kappa \tfrac{x_N}{|x|}  \Big| < \eps,
\end{equation}
where $\kappa$ is a positive constant depending only on the dimension $N$.
\end{thm}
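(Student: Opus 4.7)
My plan is to mimic the logarithmic blow-up that underlies Theorem~\ref{thm1.2}, and to absorb the resonance at $q=q_1$ through an extra logarithmic scaling factor. Set $t=\log\tfrac{1}{\abs{x}}$, $\sigma=\tfrac{x}{\abs{x}}$, and write $u(x)=\abs{x}^{-(N-1)}w(t,\sigma)$. Since $\tfrac{2}{q_1-1}=N-1$ and $\ell_{N,q_1}=N-1$, a direct computation gives
\[
w_{tt}+Nw_t+\Delta'w+(N-1)w+w^{q_1}=0
\]
on $(t_0,\infty)\times S^{N-1}_+$, with $w=0$ on $\partial S^{N-1}_+$. The critical point is that $N-1$ is the first Dirichlet eigenvalue of $-\Delta'$ on $S^{N-1}_+$, with $L^2$-normalized eigenfunction $\phi_1(\sigma)=c_N\sigma_N$; hence $L:=-\Delta'-(N-1)$ is a nonnegative self-adjoint operator on $H^1_0(S^{N-1}_+)$ with one-dimensional kernel $\RR\phi_1$. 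This resonance, which rules out a nontrivial self-similar profile by Theorem~\ref{thm1.0}\,$(i)$, is exactly what forces the logarithmic correction.

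The first step is the a priori bound
\[
u(x)\le C\abs{x}^{-(N-1)}\paran{\log\tfrac{1}{\abs{x}}}^{-(N-1)/2}\quad\text{for}\ 0<\abs{x}\le r_0,
\]
valid whenever $u$ does not extend continuously at $0$. I would prove it by comparison with a separable supersolution built from the conjectured profile $\abs{x}^{-(N-1)}t^{-(N-1)/2}\phi_1$, whose design is driven by the balance $Na'+\kappa_0 a^{q_1}\approx 0$ identified below. Coupled with standard interior elliptic estimates in $\Gw\setminus\{0\}$, this bound implies that the translated family $\{w(s+\cdot,\cdot)\}_{s\ge 1}$ is precompact in $C^2\loc(\RR\times\overline{S^{N-1}_+})$.

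Next, perform a modal decomposition. Set $a(t):=\int_{S^{N-1}_+}w(t,\sigma)\phi_1(\sigma)\,d\sigma$; using $L\phi_1=0$ and the self-adjointness of $L$, the equation projects to
\[
a''(t)+Na'(t)=-\int_{S^{N-1}_+}w(t,\sigma)^{q_1}\phi_1(\sigma)\,d\sigma.
\]
Writing $w=a(t)\phi_1+w_\perp$ with $w_\perp\perp\phi_1$, the transverse part solves $(w_\perp)_{tt}+N(w_\perp)_t-Lw_\perp=-(w^{q_1})_\perp$, and on the orthogonal complement of $\RR\phi_1$ the operator $L$ has spectrum contained in $[\lambda_2-(N-1),\infty)$ with $\lambda_2>N-1$. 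A Duhamel representation on $[t_0,\infty)$ for the bounded solution of this linear equation, combined with the a priori bound, yields $\norm{w_\perp(t,\cdot)}_{L^\infty}=O(a(t)/t)=o(a(t))$. Consequently $\int w^{q_1}\phi_1\,d\sigma=\kappa_0\,a(t)^{q_1}(1+o(1))$ with $\kappa_0=\int_{S^{N-1}_+}\phi_1^{q_1+1}\,d\sigma$. Since $a''$ is of lower order than $Na'$ along a decaying positive branch, the scalar ODE $Na'(t)=-\kappa_0 a(t)^{q_1}(1+o(1))$ integrates, via $\tfrac{1}{q_1-1}=\tfrac{N-1}{2}$, to $a(t)=\kappa_1\,t^{-(N-1)/2}(1+o(1))$ with $\kappa_1=\paran{N(N-1)/(2\kappa_0)}^{(N-1)/2}$. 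Multiplying by $t^{(N-1)/2}$ and unwinding the change of variables yields \eqref{E6a}, with $\kappa=c_N\kappa_1$ depending only on $N$.

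The principal obstacle is the slaving estimate $\norm{w_\perp(t,\cdot)}=o(a(t))$. Although the restriction of $L$ to the orthogonal complement of $\phi_1$ has a spectral gap, the forcing $(w^{q_1})_\perp$ couples every mode to the principal one at order $a(t)^{q_1}$, and the linear equation $\partial_{tt}+N\partial_t-L$ is not standardly dissipative: in each transverse mode it has one growing and one decaying exponential. Constructing the Green's function that selects the decaying branch and controlling the Duhamel iterates in a bootstrap is the most delicate point of the argument; once it is in place, the reduction to the scalar ODE for $a(t)$ and the identification of $\kappa$ proceed by standard techniques.
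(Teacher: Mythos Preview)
Your skeleton --- project onto the first Dirichlet eigenfunction $\phi_1$, control the orthogonal part via the spectral gap, and reduce to a scalar ODE for the leading coefficient --- is precisely the paper's strategy. But two of your steps, as written, do not go through.

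The a priori bound cannot be obtained by ``comparison with a separable supersolution'': for $-\Delta u=u^q$ with $q>1$ there is no one-sided maximum principle letting you conclude $u\le\bar u$ from $-\Delta\bar u\ge\bar u^q$ together with boundary ordering, since the linearized operator $-\Delta-q\xi^{q-1}$ need not be coercive near a large $\bar u$. The paper instead uses Theorem~\ref{thm1.1} to get $v=r^{N-1}\tilde u$ bounded, then Proposition~\ref{cv1} together with Theorem~\ref{thm1.0}\,(i) to force $v(t,\cdot)\to 0$ uniformly (at $q=q_1$ the only nonnegative solution of \eqref{En3} is zero); Jensen's inequality then turns the projected equation into $y''+Ny'+y^{q_1}\le Ce^{-t}$ for $y(t)=\int v\phi_1$, and the elementary ODE Lemma~\ref{lemma2.1} delivers $y(t)\le Ct^{-(N-1)/2}$. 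For the transverse part you propose a Duhamel/Green-function bootstrap and flag it as the principal obstacle; the paper bypasses this entirely. With $Y(t)=\|v_2(t,\cdot)\|_{L^2}$ and the elementary inequality $YY_{tt}\ge\int v_2(v_2)_{tt}$, multiplying the equation by $v_2$, using the second Dirichlet eigenvalue $2N$, and absorbing the nonlinearity via $\|v\|_{L^\infty}\to 0$ yields the linear differential inequality $Y''+NY'-(N+1-\eps)Y\ge -Ce^{-t}$, whose bounded nonnegative solutions decay like $e^{-t/2}$ --- exponential rather than your conjectured $O(a(t)/t)$, and no Green function is needed. You also drop the boundary-flattening perturbation $H$ (your equation for $w$ is exact only on $\RR^N_+$ with $\zeta\equiv 0$) and the alternative $\kappa=0$, which requires the separate removability result Theorem~\ref{prop6.2}.
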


\medskip
Our characterization of boundary isolated singularities is complemented by the existence of singular solutions which has been recently obtained by del~Pino-Musso-Pacard \cite{DelMusPac:07}. We recall their result:

\begin{thm}\label{thm1.3}
Assume that $\Omega \subset \RR^N$ is a smooth bounded domain. There exists $p \in ( q_1,q_{2})$ such that for every $q_1 \leq q < p$ and for every $\xi_{1},\xi_2,\ldots,\xi_{k}\in\prt\Gw$, there exists a positive function $u\in C(\bar\Omega\setminus\{\xi_j\}_{j=1}^k)$, vanishing on $\prt\Gw\setminus\{\xi_j\}_{j=1}^k$, solution of $-\Delta u=u^q$ in $\Omega$,
 such that 
\[
u(x)\to + \infty \quad \text{as $x\to\xi_{j}$ nontangentially for every $i = 1,2,\ldots,k$.}
\]
\end{thm}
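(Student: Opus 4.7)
My plan is to build the singular solution by a Lyapunov--Schmidt reduction around an explicit ansatz obtained from the half-space singular profiles of Theorems~\ref{thm1.0} and~\ref{thm1.2a}. For $q_{1} < q < q_{2}$, Theorem~\ref{thm1.0} yields a unique positive solution $\omega$ of \eqref{En3}, hence the model singular solution $U_{q}(y) = |y|^{-\frac{2}{q-1}}\omega(y/|y|)$ on $\RR^{N}_{+}$. At each $\xi_{j} \in \partial\Omega$ I would flatten the boundary by a local diffeomorphism, pull back $U_{q}$ with pole at $\xi_{j}$, and cut off away from $\xi_{j}$. Summing over $j$ and absorbing leading curvature terms by a small multiplicative correction produces a global ansatz $U_{0} \in C(\bar\Omega \setminus \{\xi_{j}\}_{j=1}^{k})$ that vanishes on $\partial\Omega \setminus \{\xi_{j}\}_{j=1}^{k}$ and whose residual $E := -\Delta U_{0} - U_{0}^{q}$ is supported near the singular set and vanishes to positive order in $|x - \xi_{j}|$. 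For $q = q_{1}$, the analogue is built from the logarithmically corrected profile $\kappa\, |x|^{-(N-1)}(\log\frac{1}{|x|})^{-\frac{N-1}{2}} \frac{x_{N}}{|x|}$ suggested by Theorem~\ref{thm1.2a}.

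Next I look for $u = U_{0} + \phi$, with $\phi = 0$ on $\partial\Omega\setminus\{\xi_{j}\}_{j=1}^{k}$, where $\phi$ solves
\begin{equation*}
L\phi := -\Delta\phi - qU_{0}^{q-1}\phi = E + N(\phi), \qquad N(\phi) := (U_{0}+\phi)^{q} - U_{0}^{q} - qU_{0}^{q-1}\phi,
\end{equation*}
in a weighted H\"older space that forces $\phi$ to be strictly less singular than $U_{0}$ at each $\xi_{j}$, so that $u = U_{0} + \phi$ still blows up at the correct rate there. The analytical heart is the invertibility of the model operator $L_{\infty} := -\Delta - qU_{q}^{q-1}$ on $\RR^{N}_{+}$. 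By separating variables $(r,\sigma) \in (0,\infty) \times S^{N-1}_{+}$ and decomposing along the eigenfunctions of the linearization of \eqref{En3} at $\omega$ (whose spectrum is controlled through the uniqueness statement of Theorem~\ref{thm1.0}), one computes the indicial roots of $L_{\infty}$ at $0$ and at $\infty$ explicitly. The condition $q < p$ for a suitable $p \in (q_{1}, q_{2})$ is exactly what keeps the chosen weight strictly between two consecutive indicial roots and produces a bounded right inverse of $L_{\infty}$. A parametrix gluing these local inverses with the Dirichlet inverse of $-\Delta$ away from the singular set then yields a bounded right inverse of $L$ on $\Omega$, possibly modulo a finite-dimensional cokernel generated by the approximate translation and scaling symmetries of $U_{q}$.

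With such a right inverse in hand, the equation $L\phi = E + N(\phi)$ is solved by contraction in the weighted norm: $N(\phi)$ is superlinear in $\phi$ and $E$ is a priori small since $U_{0}$ matches $U_{q}$ to leading order near each $\xi_{j}$. Any residual finite-dimensional obstruction is removed in Lyapunov--Schmidt fashion by tuning the pole locations of the ansatz along $\partial\Omega$ near each $\xi_{j}$, which yields a finite-dimensional algebraic system solved by the implicit function theorem. The main obstacle is this last spectral step: obtaining a bounded right inverse of $L_{\infty}$ uniformly as $q$ approaches the threshold $p$, killing the finite-dimensional obstructions, and certifying that the weight stays strictly between two consecutive indicial roots. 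This is precisely what forces $p$ to be strictly smaller than $q_{2}$ and constitutes the technical core of the construction.
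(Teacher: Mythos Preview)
The paper does not prove Theorem~\ref{thm1.3} at all: it is explicitly introduced with ``We recall their result'' and attributed to del~Pino--Musso--Pacard~\cite{DelMusPac:07}, so there is no ``paper's own proof'' to compare against. What you have sketched is, in outline, precisely the Lyapunov--Schmidt / gluing strategy of~\cite{DelMusPac:07}: build an ansatz from the half-space profile, linearize in weighted H\"older spaces, analyze the indicial roots of the model operator on $\RR^N_+$ to obtain a right inverse, and close by a fixed-point argument with finitely many parameters to kill. In that sense your proposal is on target, but it is a high-level roadmap rather than a proof; the genuinely hard work --- the precise weighted-space theory, the computation of indicial roots via the spectrum of the linearization of \eqref{En3} at $\omega$, and the uniform invertibility needed to run the contraction --- is exactly what~\cite{DelMusPac:07} carries out and what would have to be reproduced here in full.
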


In view of Theorems~\ref{thm1.2} and \ref{thm1.2a} any such solution must have the singular behavior we have obtained therein. In \cite{DelMusPac:07}, the authors conjecture that such solutions exist for every $q_1 \leq q < q_2$.

\bigskip

Some of the main ingredients in the proofs of Theorems~\ref{thm1.2} and \ref{thm1.2a} are Theorem~\ref{thm1.0} above concerning existence and uniqueness of positive solutions of \eqref{En3}, a removable singularity result (see Theorems~\ref{prop6.1} and \ref{prop6.2} below) and the following a priori bound of solutions of \eqref{E1}:

\begin{thm}\label{thm1.1}
Assume that $1<q<q_{2}$. Then, every solution of \eqref{E1} satisfies
\begin{equation}\label{E5}
u(x)\leq C|x|^{-\frac{2}{q-1}} \quad \forall x\in \Gw,
\end{equation}
for some constant $C>0$ independent of the solution.
 \end{thm}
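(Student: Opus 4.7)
The plan is to argue by contradiction using the rescaling (blow-up) method, combined with the doubling lemma of Pol\'a\v{c}ik--Quittner--Souplet to locate good points and the Liouville-type nonexistence theorems of Gidas--Spruck on $\RR^N$ and on the half-space $\RR^N_+$.

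\emph{Setup and doubling.} Suppose the bound fails. Then there exist solutions $u_n$ of \eqref{E1} and points $x_n \in \Omega$ with
\[
M_n(x_n)\, |x_n| \longrightarrow \infty, \qquad M_n := u_n^{(q-1)/2}.
\]
Interior elliptic estimates together with the boundedness of $\zeta$ on $\partial\Omega\setminus\{0\}$ force $x_n \to 0$. I apply the doubling lemma in $\overline\Omega$ with singular set $\Gamma=\{0\}$, weight $M_n$, and parameter $k_n\to\infty$ chosen so that $k_n \leq \tfrac{1}{3}M_n(x_n)|x_n|$, obtaining $y_n \in \Omega$ with $M_n(y_n) \geq M_n(x_n)$, $M_n(y_n)|y_n| \geq 2 k_n$, and
\[
u_n(z) \leq 2^{2/(q-1)}\, u_n(y_n) \quad \text{whenever } z \in \overline\Omega\setminus\{0\},\; |z-y_n| \leq k_n/M_n(y_n).
\]
In particular $M_n(y_n) \to \infty$, and the rescaled position of the singular point $0$ lies at distance at least $2k_n$ from the origin, hence escapes every fixed ball.

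\emph{Rescaling and limit problems.} Set $\lambda_n := M_n(y_n)^{-1}$ and
\[
v_n(z) := u_n(y_n)^{-1}\, u_n(y_n + \lambda_n z), \qquad z \in \Omega_n := \lambda_n^{-1}(\Omega - y_n).
\]
Then $-\Delta v_n = v_n^q$ in $\Omega_n$, $v_n(0) = 1$, $0 \leq v_n \leq 2^{2/(q-1)}$ on $B_{k_n}(0)\cap\overline{\Omega_n}$, and the Dirichlet data $u_n(y_n)^{-1}\zeta(y_n + \lambda_n \cdot)$ tend to zero uniformly on compact subsets, because $\zeta$ is bounded and $u_n(y_n)\to\infty$. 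I distinguish cases according to whether $\alpha_n := M_n(y_n)\,\dist(y_n,\partial\Omega)$ tends to $\infty$ or to some finite $d \geq 0$, and pass to a $C^2_\loc$ limit via interior, respectively boundary, Schauder estimates. If $\alpha_n \to \infty$, then $\Omega_n$ exhausts $\RR^N$, producing a positive classical solution $v$ of $-\Delta v = v^q$ on $\RR^N$ with $v(0)=1$, which is ruled out by the Gidas--Spruck theorem for $1 < q < q_2$. If $\alpha_n \to d < \infty$, I translate $y_n$ to its closest boundary point and rotate so that the inward unit normal becomes $\bs e_N$; since $\partial\Omega$ is smooth, the rescaled boundary flattens as $\lambda_n \to 0$ and $\Omega_n$ converges to a half-space. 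The limit $v$ is then a bounded, nontrivial, nonnegative solution of $-\Delta v = v^q$ in $\RR^N_+$ vanishing on $\partial \RR^N_+$, contradicting the Liouville theorem on the half-space (Gidas--Spruck, Dancer) in the same subcritical range.

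\emph{Main obstacle.} The delicate step is the boundary-blow-up case: one must verify that the domains $\Omega_n$ converge smoothly enough to a half-space for boundary $C^{2,\alpha}$ estimates to apply uniformly along the sequence, and one must control the location of the singular point $0$ under the rescaling. The choice $\Gamma = \{0\}$ in the doubling step is tailored precisely to the latter: the bound $M_n(y_n)|y_n| \geq 2k_n \to \infty$ forces the image of $0$ to recede to infinity and thereby disappear from the limit problem. The subcritical assumption $q < q_2$ is used exactly at the moment of invoking the Gidas--Spruck nonexistence results, both interior and on the half-space.
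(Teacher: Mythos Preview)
Your approach is correct and matches the paper's: both argue by contradiction via the Doubling Lemma, rescale, and invoke the Liouville theorems on $\RR^N$ and $\RR^N_+$. The paper organizes the argument slightly differently, first proving a uniform estimate on annuli $\Omega\cap(B_{2r}\setminus\overline B_r)$ (Proposition~\ref{apr2}) with the doubling set $\Gamma$ taken to be the spherical boundary $\overline\Omega\cap(\partial B_{2r}\cup\partial B_r)$, and then specializing $r=\tfrac23|x|$; your direct choice $\Gamma=\{0\}$ collapses this detour but is otherwise equivalent. One small remark: your assertion that $x_n\to 0$ is neither justified nor needed---what the argument actually uses is $M_n(y_n)\to\infty$, and this already follows from $M_n(y_n)\ge M_n(x_n)$ together with the boundedness of $|x_n|$.
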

 
We establish this estimate using a topological argument, called the \emph{Doubling lemma} (see Lemma~\ref{doubl} below), introduced by Pol\'a\v cik-Quittner-Souplet~\cite{PolQuiSou:07}.

\medskip
Theorems~\ref{thm1.2}, \ref{thm1.2a} and \ref{thm1.1} have been announced in \cite{BidPonVer:07}.


\section{Poho\v zaev identity in spherical domains}\label{sec7}

We first prove the following Poho\v zaev identity in spherical domains.

\begin{thm}\label{poth}
Let $q > 1$, $\ell \in \RR$ and $S$ be a smooth domain in $S_+^{N-1}$. If $v \in C^2(S) \cap C(\overline{S})$ satisfies
\begin{equation}\label{Amain}
\left\{
\begin{alignedat}{2}
-\Delta'v & = \ell v+\abs v^{q-1}v  && \quad \text {in }S,\\
v & =0 && \quad\text {on }\partial S,
\end{alignedat}
\right.
\end{equation}
then
\begin{equation}\label{A}
\big(\tfrac{N-3}{2}-\tfrac{N-1}{q+1}\big)
\myint{S}{}\abs{\nabla' v}^2\gf\,d\gs
- \tfrac{N-1}{2}\big(\tfrac{\ell (q-1) + N-1}{q+1} \big) \myint{S}{} v^{2}\gf\,d\gs 
= \tfrac{1}{2}\myint{\prt S}{}\abs{\nabla' v}^2\langle\nabla'\gf,\gn\rangle\,d\gt,
\end{equation}
where $\nu$ is the outward unit normal vector on $\partial S$, $\nabla'$ the tangential gradient to $S^{N-1}$, and $\phi$ is a first eigenfunction of the Laplace-Beltrami operator $-\Delta'$ in $W_0^{1, 2}(S^{N-1}_+)$.
\end{thm}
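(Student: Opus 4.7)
\medskip
\noindent\textit{Proof plan.} The plan is to combine two integral identities obtained by testing equation~\eqref{Amain} against two different multipliers, and then to eliminate the nonlinear term between them. The crucial ingredient is the observation that a first Dirichlet eigenfunction $\phi$ of $-\Delta'$ on $S^{N-1}_+$ is, up to a positive scalar, the restriction to $S^{N-1}$ of the linear function $x\mapsto x_N$. Consequently, by the Gauss formula applied to the embedding $S^{N-1}\hookrightarrow\mathbb{R}^N$, the intrinsic Hessian of $\phi$ on $S^{N-1}$ satisfies the conformal identity
\[
\nabla'_i\nabla'_j\phi = -\phi\,g_{ij},
\]
whose trace recovers $\Delta'\phi = -(N-1)\phi$. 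This single identity is what will make the quadratic terms in $\nabla' v$ collapse cleanly.

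First I would derive an \emph{energy identity} by multiplying~\eqref{Amain} by $v\phi$ and integrating over $S$. Since $v=0$ on $\partial S$, every boundary contribution vanishes, and integration by parts combined with $\Delta'\phi=-(N-1)\phi$ gives
\[
\int_S|\nabla'v|^2\phi\,d\sigma + \tfrac{N-1}{2}\int_S v^2\phi\,d\sigma = \ell\int_S v^2\phi\,d\sigma + \int_S|v|^{q+1}\phi\,d\sigma.
\]
This identity will be used at the very end to eliminate the nonlinear term $\int_S|v|^{q+1}\phi\,d\sigma$.

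Next I would test~\eqref{Amain} against the Pohozaev-type multiplier $\langle\nabla'v,\nabla'\phi\rangle$. On the right-hand side I rewrite
\[
\ell v\langle\nabla'v,\nabla'\phi\rangle=\tfrac{\ell}{2}\langle\nabla'(v^2),\nabla'\phi\rangle,\qquad |v|^{q-1}v\langle\nabla'v,\nabla'\phi\rangle=\tfrac{1}{q+1}\langle\nabla'(|v|^{q+1}),\nabla'\phi\rangle,
\]
and integrate by parts: the boundary terms vanish because $v^2$ and $|v|^{q+1}$ are zero on $\partial S$, and $\Delta'\phi=-(N-1)\phi$ yields
\[
\tfrac{\ell(N-1)}{2}\int_S v^2\phi\,d\sigma+\tfrac{N-1}{q+1}\int_S|v|^{q+1}\phi\,d\sigma.
\]
On the left-hand side I integrate by parts once and expand $\nabla'\langle\nabla'v,\nabla'\phi\rangle$. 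The Hessian identity $\nabla'_i\nabla'_j\phi=-\phi g_{ij}$ produces the bulk term $-\phi|\nabla'v|^2$; the remaining piece $\tfrac12\langle\nabla'|\nabla'v|^2,\nabla'\phi\rangle$ is integrated by parts once more and contributes $\tfrac{N-1}{2}\phi|\nabla'v|^2$ together with a boundary term $\tfrac12|\nabla'v|^2\langle\nabla'\phi,\nu\rangle$. Using that $v=0$ on $\partial S$ forces $\nabla'v=(\partial_\nu v)\nu$, so that $(\partial_\nu v)^2=|\nabla'v|^2$ along $\partial S$, the leftover boundary piece from the original integration by parts combines with this to give a net factor of $-\tfrac12$. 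Collecting terms, the left-hand side becomes
\[
\tfrac{N-3}{2}\int_S|\nabla'v|^2\phi\,d\sigma-\tfrac{1}{2}\int_{\partial S}|\nabla'v|^2\langle\nabla'\phi,\nu\rangle\,d\tau.
\]

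Finally, equating the two sides of this second identity and substituting the energy identity to eliminate $\int_S|v|^{q+1}\phi\,d\sigma$, a short algebraic manipulation — in which the coefficient of $\int_S v^2\phi\,d\sigma$ reorganizes as $\tfrac{N-1}{2}\cdot\tfrac{\ell(q-1)+(N-1)}{q+1}$ — yields~\eqref{A}. I expect the main obstacle to lie not in the algebra but in carrying out the intrinsic-geometric computation of the Hessian of $\phi$ on $S^{N-1}$: it is exactly here that the particular form of the first Dirichlet eigenfunction on the hemisphere is essential, and where one must take care to work with covariant rather than naive partial derivatives. Once the Hessian identity is secured, the remainder is bookkeeping of integration by parts.
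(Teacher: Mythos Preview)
Your proposal is correct and follows essentially the same route as the paper: the paper packages the Poho\v zaev computation by applying the divergence theorem to the vector field $P=\langle\nabla'\phi,\nabla'v\rangle\nabla'v$, but this amounts to exactly the same integration by parts you perform after testing the equation against the multiplier $\langle\nabla'v,\nabla'\phi\rangle$. The key inputs---the Hessian identity $D^2\phi+\phi\,g=0$, the energy identity obtained by testing against $v\phi$, and the boundary reduction $\nabla'v=(\partial_\nu v)\nu$ on $\partial S$---are identical in both arguments.
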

 
We recall that the first eigenvalue of $-\Delta'$ in $W_0^{1, 2}(S^{N-1}_+)$ is $N-1$ and the eigenspace associated to this eigenvalue is spanned by the function $\phi(x) = \frac{x_N}{|x|}$.
 
\begin{proof}
Let
$$
P=\langle\nabla'\gf,\nabla' v\rangle\nabla' v.
$$
By the Divergence theorem,
\begin{equation}\label{7.1}
\int_{S} \div{P} \, d\gs= \int_{\prt S} \langle P,\gn\rangle\, d\gt.
\end{equation}
Note that
\begin{equation*}
\div{P} = \langle \nabla' v, \nabla'\gf \rangle\Gd' v+D^2v(\nabla' v, \nabla' \gf) + D^2\gf(\nabla' v,\nabla' v).
\end{equation*}
where $D^2v$ is the Hessian operator. Now,
\begin{equation*}
D^2v(\nabla' v, \nabla' \gf)=\myfrac{1}{2}\langle \nabla'\abs{\nabla' v}^2,\nabla' \gf\rangle.
\end{equation*}
Using the classical identity
\begin{equation*}
D^2\gf+\gf\,g=0
\end{equation*}
where $g=(g_{i,j})$ is the metric tensor on $S^{N-1}$, we get
\[
D^2\gf(\nabla' v,\nabla' v) = - g(\nabla' v, \nabla' v) \phi = - |\nabla' v|^2 \phi.
\]
We replace these identities in the expression of $\div{P}$,
\begin{equation*}
\div{P} = -\langle\nabla' v, \nabla'\gf\rangle \big(\ell v+\abs v^{q-1}v\big)
+\frac{1}{2}\langle\nabla' \abs{\nabla' v}^2,\nabla' \gf\rangle
-\abs{\nabla' v}^2 \gf.
\end{equation*}
Integrating over $S$, we obtain
$$
\int_{S} \div{P} \, d\gs = - \int_{S} \langle\nabla' v, \nabla'\gf \rangle \big(\ell v+\abs v^{q-1}v\big)
\,d\gs + \frac{1}{2}\int_{S} \langle\nabla' \abs{\nabla' v}^2,\nabla' \gf\rangle\,d\gs
-\int_{S} \abs{\nabla' v}^2 \gf\,d\gs.
$$
Note that
\begin{equation*}
\begin{aligned}
\int_{S} \langle\nabla' v, \nabla'\gf \rangle \big(\ell v+\abs v^{q-1}v \big)\,d\gs
& = \int_{S} \Big\langle \nabla' \Big(\tfrac{\ell}{2} v^2 + \tfrac{1}{q+1}\abs v^{q+1} \Big), \nabla'\gf \Big\rangle\,d\gs\\
& = -\int_{S} \Big(\tfrac{\ell}{2} v^2 + \tfrac{1}{q+1}\abs v^{q+1} \Big)\Gd'\gf\,d\gs\\
& = (N-1) \int_{S} \Big(\tfrac{\ell}{2} v^2 + \tfrac{1}{q+1}\abs v^{q+1} \Big) \gf \,d\gs,
\end{aligned}
\end{equation*}
and
\begin{equation*}
\begin{aligned}
\int_{S} \langle\nabla' \abs{\nabla' v}^2,\nabla' \gf\rangle\,d\gs
& = - \int_{S} \abs{\nabla' v}^2\Gd' \gf\,d\gs+
\int_{\prt S} \abs{\nabla' v}^2\langle\nabla'\gf,\gn\rangle\,d\gt\\
& = (N-1)\int_{S} \abs{\nabla' v}^2 \gf \,d\gs+
\myint{\prt S}{}\abs{\nabla' v}^2\langle\nabla'\gf,\gn\rangle\,d\gt.
\end{aligned}
\end{equation*}
These identities imply
\begin{multline}\label{7.2}
\int_{S} \div{P} \,d\gs =
-\tfrac{\ell (N-1)}{2} \int_{S} v^2 \phi \, d\sigma - \tfrac{N-1}{q+1} \int_{S} \abs v^{q+1} \gf\,d\gs+
\tfrac{N-3}{2} \int_{S} \abs{\nabla' v}^2 \gf\,d\gs+\\
+ \tfrac{1}{2} \int_{\prt S}\abs{\nabla' v}^2\langle\nabla'\gf,\gn\rangle\,d\gt.
\end{multline}
On the other hand, since $v$ satisfies \eqref{Amain},
\begin{equation*}
\begin{split}
\myint{S}{} \big(\ell v^2 + \abs v^{q+1} \big)\gf\,d\gs
& =-\myint{S}{} (\Gd' v) v\gf\,d\gs\\
& =\myint{S}{}\langle \nabla' v, \nabla' (v\gf) \rangle\,d\gs =\myint{S}{}\abs{\nabla' v}^2\gf\,d\gs+\myint{S}{}\langle \nabla' v, \nabla'\gf \rangle v\,d\gs.
\end{split}
\end{equation*}
Since $v\nabla' v = \frac{1}{2} \nabla'{(v^2)}$ and $\Gd'\gf=-(N-1)\gf$,
\[
\myint{S}{}\langle \nabla'v,\nabla' \gf\rangle v\,d\gs = \frac{1}{2} \int_S \langle \nabla'(v^2), \nabla'\phi \rangle\, d\sigma = \tfrac{N-1}{2} \int_S v^2 \phi \, d\sigma.
\]
Thus,
\begin{equation*}
\myint{S}{} \big(\ell v^2 + \abs v^{q+1} \big)\gf\,d\gs =\myint{S}{}\abs{\nabla' v}^2\gf\,d\gs+\tfrac{N-1}{2}\myint{S}{}v^2\gf\,d\gs.
\end{equation*}
This implies
$$
\myint{S}{}\abs v^{q+1}\gf\,d\gs=
\myint{S}{}\abs{\nabla' v}^2\gf\,d\gs+ \big(\tfrac{N-1}{2}-\ell \big) \myint{S}{}v^2\gf\,d\gs.
$$
Inserting this identity in \eqref{7.2}, we obtain
\begin{multline}\label{7.3}
\myint{S}{} \div{P} \,d\gs= \big(\tfrac{N-3}{2}-\tfrac{N-1}{q+1}\big)
\myint{S}{}\abs{\nabla' v}^2\gf\,d\gs
- \big(\tfrac{\ell (N-1)}{2}+\tfrac{N-1}{q+1}\big(\tfrac{N-1}{2}-\ell \big) \big) \myint{S}{} v^{2}\gf\,d\gs +\\
+\tfrac{1}{2}\myint{\prt S}{}\abs{\nabla' v}^2\langle\nabla'\gf,\gn\rangle\,d\gt.
\end{multline}
Since $v$ vanishes on $\prt S$, $\nabla' v= \langle \nabla' v,\gn\rangle \nu$ and, in particular, $|\nabla' v| = |\langle \nabla' v, \nu \rangle |$.
Thus,
\begin{equation}\label{7.4}
\begin{split}
\int_{\partial S} \langle P, \nu \rangle \, d\tau = \myint{\prt S}{}\langle\nabla'\gf,\nabla' v\rangle\langle\nabla' v,\gn\rangle\,d\gt
& = \myint{\prt S}{}\big(\langle\nabla' v,\gn\rangle \big)^2 \langle\nabla'\gf, \nu \rangle \,d\gt\\
& =\myint{\prt S}{}\abs{\nabla' v}^2\langle\nabla'\gf,\gn\rangle\,d\gt.
\end{split}
\end{equation}
Combining \eqref{7.1}, \eqref{7.3} and \eqref{7.4}, we get the Poho\v zaev identity.
\end{proof} 

Using the Poho\v zaev identity on $S^{N-1}_+$ we can prove that the Dirichlet problem \eqref{Amain} can only have trivial solutions for suitable values of $q$ and $\ell$.

\begin{cor}\label{cor7.1}
Let $N \ge 4$. If $q \geq q_3$ and $\ell \leq - \frac{N-1}{q-1}$, then the function identically zero is the only solution in $C^2(S^{N-1}) \cap C^2(\overline{S^{N-1}})$ of the Dirichlet problem
\begin{equation*}
\left\{
\begin{alignedat}{2}
-\Delta'v & = \ell v+\abs v^{q-1}v  && \quad \text {in }S^{N-1}_+,\\
v & =0 && \quad\text {on }\partial S^{N-1}_+.
\end{alignedat}
\right.
\end{equation*}
\end{cor}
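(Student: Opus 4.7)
The plan is to apply the Pohozaev identity \eqref{A} of Theorem~\ref{poth} to the full upper hemisphere $S = S^{N-1}_+$, with the first eigenfunction $\phi(x) = x_N/|x|$, and then to extract sign information from the boundary integral. First I would compute the factor $\langle \nabla'\phi, \nu \rangle$ on $\partial S^{N-1}_+$: restricted to $S^{N-1}$ the function $\phi$ is simply $x \mapsto x_N$, so $\nabla'\phi(x) = \mathbf{e}_N - x_N x$, which reduces to $\mathbf{e}_N$ on the equator $\{x_N = 0\}$. The outward unit normal tangent to $S^{N-1}$ there, pointing away from the upper hemisphere, is $\nu = -\mathbf{e}_N$, whence $\langle \nabla'\phi, \nu \rangle \equiv -1$ on $\partial S^{N-1}_+$. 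Inserting this in \eqref{A} gives
\begin{equation*}
A \int_{S^{N-1}_+} |\nabla' v|^2 \phi \, d\sigma + B \int_{S^{N-1}_+} v^2 \phi \, d\sigma = -\tfrac{1}{2} \int_{\partial S^{N-1}_+} |\nabla' v|^2 \, d\tau,
\end{equation*}
with $A = \tfrac{N-3}{2} - \tfrac{N-1}{q+1}$ and $B = -\tfrac{N-1}{2} \cdot \tfrac{\ell(q-1) + N-1}{q+1}$.

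The second step is a purely algebraic sign check. The inequality $A \ge 0$ is equivalent to $(N-3)(q+1) \ge 2(N-1)$, i.e.\ $q \ge q_3$ (using $N \ge 4$ to divide by $N - 3$), while $B \ge 0$ is equivalent to $\ell(q-1) + N - 1 \le 0$, i.e.\ $\ell \le -\tfrac{N-1}{q-1}$. Both hold by hypothesis, and since $\phi > 0$ throughout $S^{N-1}_+$, the left-hand side above is nonnegative while the right-hand side is nonpositive. Consequently each of the three integrals must vanish. Whenever at least one of $A, B$ is strictly positive, the corresponding bulk term alone forces $v \equiv 0$: from $\int v^2 \phi \, d\sigma = 0$ if $B > 0$; or from $\int |\nabla' v|^2 \phi \, d\sigma = 0$ together with the Dirichlet condition if $A > 0$.

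The main obstacle is the doubly critical case $q = q_3$ \emph{and} $\ell = -\tfrac{N-1}{q-1}$, in which $A = B = 0$ and the Pohozaev identity only extracts the boundary information $\nabla' v \equiv 0$ on the equator. Combined with the Dirichlet condition, this means that $v$ and its full tangential gradient on $S^{N-1}$ vanish along a piece of the boundary. To close the argument I would rewrite the semilinear equation as the linear elliptic equation $-\Delta' v + c(x) v = 0$ with bounded potential $c(x) := -\ell - |v(x)|^{q-1} \in L^\infty(S^{N-1}_+)$, and invoke Aronszajn's unique continuation theorem (applicable since the Cauchy data of $v$ vanish on a relatively open subset of $\partial S^{N-1}_+$) to conclude that $v \equiv 0$ on $S^{N-1}_+$.
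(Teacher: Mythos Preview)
Your argument is correct and follows essentially the same route as the paper: apply the Poho\v zaev identity with $\phi(x)=x_N/|x|$, exploit the sign of $\langle\nabla'\phi,\nu\rangle$ on the equator to force all three integrals to vanish, and treat the doubly critical case $q=q_3$, $\ell=-\tfrac{N-1}{q-1}$ via unique continuation. The only cosmetic difference is that the paper, instead of invoking Cauchy unique continuation directly, extends $v$ by zero to the full sphere (which yields a distributional solution precisely because $v$ and $\nabla' v$ vanish on the equator) and then applies weak unique continuation from the open lower hemisphere.
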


\begin{proof}
Let $v$ be a solution of the Dirichlet problem. Applying the Poho\v zaev identity with $\phi(x) = \frac{x_N}{|x|}$, then the left-hand side of the Poho\v zaev identity is nonnegative, while its right-hand side is nonpositive. Thus, both sides are zero. If at least one of the inequalitites $q \geq q_3$ or $\ell \leq - \frac{N-1}{q-1}$ is strict, then we immediately deduce that $v = 0$ in $S_+^{N-1}$.\\
If $q = q_3$ and $\ell = - \frac{N-1}{q-1}$, then 
\[
\int\limits_{\prt S_+^{N-1}}\abs{\nabla' v}^2\langle\nabla'\gf,\gn\rangle\,d\gt = 0.
\]
Since $\langle \nabla'\gf,\gn\rangle < 0$ on $\partial S_+^{N-1}$, we conclude that $\nabla' v = 0$ on $\partial S_+^{N-1}$. Define the function $\tilde v : S^{N-1} \to \RR$ by 
\begin{equation*}
\tilde v(x) =
\begin{cases}
v(x) 	& \text{if } x \in S_+^{N-1},\\
0 	& \text{otherwise.}
\end{cases}
\end{equation*}
Then, $\tilde v$ satisfies (in the sense of distributions)
\begin{equation*}
-\Delta' \tilde v = \ell \tilde v+\abs{\tilde v}^{q-1} \tilde v  \quad \text {in }S^{N-1}.
\end{equation*}
Since $\tilde v$ vanishes in an open subset of $S^{N-1}$, by the unique continuation principle we have $\tilde v = 0$ in $S^{N-1}$ and the conclusion follows.
\end{proof}

\noindent\Remark When $S\subsetneq S_{+}^{N-1}$ and $q>q_{3}$ the previous non-existence result can be improved if we define
\begin{equation}\label{O}
\gl(S,\gf)=\sup\left\{\mu\geq 0:\myint{S}{}|\nabla'\gz|^2\gf d\gs\geq\mu
\myint{S}{}\gz^2\gf d\gs,\;\;\forall\gz\in C^\infty_{0}(S)\right\}.
\end{equation}
This constant $\gl(S,\gf)$ is actually zero if $S= S_{+}^{N-1}$. With this inequality (\ref{A}) turns into 
\begin{equation}\label{B}
\left[\big(\tfrac{N-3}{2}-\tfrac{N-1}{q+1}\big)\gl(S,\gf)
- \tfrac{N-1}{2}\big(\tfrac{\ell (q-1) + N-1}{q+1} \big) \right]\myint{S}{} v^{2}\gf\,d\gs 
\leq \tfrac{1}{2}\myint{\prt S}{}\abs{\nabla' v}^2\langle\nabla'\gf,\gn\rangle\,d\gt.
\end{equation}
Therefore, the statement of Corollary \ref{cor7.1}still holds if $q>q_{3}$ and
\begin{equation}\label{C}
\ell (q-1)\geq 1-N+\frac{q(N-3)-N-1}{N-1}\gl(S,\phi).
\end{equation}
Note that $\gl(S,\phi)$ tends to infinity if $S$ shrinks to a point.
 
\section{Uniqueness of solutions of a \textsc{pde} in $S_+^{N-1}$}\label{sec8}

In this section we address the question of uniqueness of positive solutions of the Dirichlet problem
\begin{equation}\label{8.1}
\left\{
\begin{alignedat}{2}
-\Gd'v & = \ell v + v^{q} && \quad\text{in } S^{N-1}_+,\\
v & \ge 0  && \quad\text{in } S^{N-1}_+,\\
v & = 0 && \quad\text{on } \prt S^{N-1}_+,
\end{alignedat}
\right.
\end{equation}
where $\ell \in \RR$. A solution of \eqref{8.1} is understood in the classical sense.

\medskip
We shall prove the following results:

\begin{thm}\label{thm8.1}
Assume that $N=2$. If $q >1$, then for every $\ell \in \RR$ the Dirichlet problem \eqref{8.1} has at most one positive solution.
\end{thm}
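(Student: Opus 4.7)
When $N=2$, the upper half sphere $S^1_+$ is the open arc parameterized by $\theta\in(0,\pi)$, and $\Delta'$ reduces to $d^2/d\theta^2$. So the problem becomes an ODE boundary value problem
\[
-v''(\theta) = \ell v(\theta) + v(\theta)^q,\qquad v(0)=v(\pi)=0,\qquad v>0\ \text{on}\ (0,\pi),
\]
and the plan is to carry out a standard time-map / shooting analysis. First, I would show that any positive solution is symmetric about $\theta=\pi/2$. Because the right-hand side $-\ell v - v^q$ is locally Lipschitz in $v$, the initial value problem is uniquely solvable. If $\theta^*\in(0,\pi)$ is a point where $v$ attains its maximum, then $v(\theta^*+t)$ and $v(\theta^*-t)$ solve the same IVP (with data $v(\theta^*)=M$, $v'(\theta^*)=0$), so they agree; hence $v$ is symmetric about $\theta^*$, and $v(0)=v(2\theta^*)=0$ forces $\theta^*=\pi/2$ since $v>0$ on $(0,\pi)$.

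Next I would use the conserved energy obtained by multiplying the equation by $v'$:
\[
E := \tfrac{1}{2}(v')^2 + \tfrac{\ell}{2}v^2 + \tfrac{1}{q+1}v^{q+1} \equiv \tfrac{\ell}{2}M^2 + \tfrac{M^{q+1}}{q+1},
\]
where $M:=v(\pi/2)$. On $(0,\pi/2)$, $v'>0$ strictly: indeed, a zero of $v'$ there would force, via the energy relation, $F(v)=F(M)$ where $F(v)=\tfrac{\ell}{2}v^2+\tfrac{v^{q+1}}{q+1}$; an elementary analysis (monotone increasing on $[(-\ell)_+^{1/(q-1)},\infty)$) combined with the fact that $F(M)=E>0$ gives $v=M$ there, contradicting the uniqueness of the maximum. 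Separating variables and substituting $v=Ms$, I would then derive the time-map identity
\[
T(M) := \int_0^1 \frac{ds}{\sqrt{\ell(1-s^2) + \tfrac{2M^{q-1}}{q+1}(1-s^{q+1})}} = \frac{\pi}{2}.
\]

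Finally, I would establish strict monotonicity of $T$. Writing $g(M,s):=\ell(1-s^2) + \tfrac{2M^{q-1}}{q+1}(1-s^{q+1})$, one has
\[
\frac{\partial g}{\partial M} = \tfrac{2(q-1)}{q+1}M^{q-2}(1-s^{q+1}) > 0 \quad\text{for all}\ s\in[0,1),\ M>0,
\]
so the integrand $1/\sqrt{g(M,s)}$ is a strictly decreasing function of $M$ at every $s\in(0,1)$. Since $g(M,s)>0$ on $[0,1)$ whenever $M$ is the maximum of a positive solution (as noted above, this is precisely the condition that $v'$ be strictly positive on $(0,\pi/2)$), integrating the pointwise inequality gives that $T$ is strictly decreasing on the admissible range of $M$. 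Hence $T(M)=\pi/2$ admits at most one root, proving uniqueness. The argument is essentially an ODE calculation, so no step is a serious obstacle; the only point requiring some care is verifying that the expression under the square root stays positive, which follows directly from the existence of the positive solution.
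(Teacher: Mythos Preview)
Your proof is correct. Both your argument and the paper's rest on the same first integral of the autonomous equation $v''+\ell v+v^q=0$, but you package it differently. You derive the time map $T(M)=\int_0^1 \big[\ell(1-s^2)+\tfrac{2M^{q-1}}{q+1}(1-s^{q+1})\big]^{-1/2}\,ds$ and show it is strictly decreasing in $M$, so $T(M)=\pi/2$ has at most one root. The paper instead introduces $V(\xi)=v_\theta(v^{-1}(\xi))$, notes that $(V^2)_\xi=-2(\ell\xi+\xi^q)$ is solution--independent (this is just your energy identity differentiated), and compares two putative solutions directly: $V_1^2-V_2^2$ is constant between crossing points, which contradicts the sign of the derivatives there. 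Your route is the classical time-map analysis and is arguably cleaner for $N=2$; the paper's phrasing is chosen because it foreshadows the method used for $N\ge 3$, where the $\cot\theta$ term destroys the first integral and one must work with carefully chosen energy-like quantities instead. One minor point: your justification that a second critical point $\theta_0\in(0,\pi/2)$ forces $v(\theta_0)=M$ is fine, but the contradiction is quickest by noting that symmetry about $\theta_0$ (from the IVP) gives $v(2\theta_0)=v(0)=0$ with $2\theta_0\in(0,\pi)$, rather than invoking ``uniqueness of the maximum''.
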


\begin{thm}\label{thm8.2}
Assume that $N \ge 4$. If $1 < q < q_3$, then for every $\ell \in \RR$ the Dirichlet problem \eqref{8.1} has at most one positive solution.
\end{thm}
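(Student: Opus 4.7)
The plan is to reduce \eqref{8.1} to an ODE in the polar angle, and then adapt to that ODE the shooting / monotone separation method of Kwong~\cite{Kwo:91} and Kwong--Li~\cite{KwoLi:92}, with the subcritical exponent condition $q<q_3$ entering through a Pohozaev-type identity analogous to Theorem~\ref{poth}.

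\textbf{Step 1: axial symmetry.} I would first show that any positive solution $v$ of \eqref{8.1} depends only on the polar angle $\theta \in [0,\pi/2]$ measured from the pole $\boldsymbol{\e}_{N}$. Reflections across any meridian hyperplane (i.e.\ a hyperplane through the $x_N$-axis) map $S_+^{N-1}$ to itself and fix $\partial S_+^{N-1}$ pointwise, and $\Delta'$ commutes with such isometries. The moving-plane method applied to these reflections, together with the strong maximum principle and Hopf's lemma (using $v=0$ on $\partial S_+^{N-1}$ and $v>0$ in the interior), forces $v$ to be invariant under every such reflection, hence axially symmetric.

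\textbf{Step 2: ODE reduction.} In the variable $\theta$, the PDE becomes the Sturm--Liouville problem
\[
(\sin^{N-2}\theta \, v')' + \sin^{N-2}\theta\,(\ell v + v^q) = 0, \qquad \theta\in(0,\pi/2),
\]
with $v'(0)=0$ (regularity at the pole) and $v(\pi/2)=0$. The drift $(N-2)\cot\theta$ behaves like $(N-2)/\theta$ near $\theta=0$, which is the coefficient of the radial Laplacian in $\RR^{N-1}$; this identifies $N-1$ as the ``effective dimension'' of the ODE, with Sobolev exponent $\tfrac{(N-1)+2}{(N-1)-2}=q_3$.

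\textbf{Step 3: Kwong--Li uniqueness.} For each $\alpha>0$ denote by $v(\,\cdot\,;\alpha)$ the local $C^{2}$ solution of the ODE satisfying $v(0;\alpha)=\alpha$, $v'(0;\alpha)=0$, and let $T(\alpha)$ be its first zero. Uniqueness in \ref{thm8.2} is equivalent to showing that the set $\{\alpha>0 : T(\alpha)=\pi/2\}$ contains at most one point. Following Kwong--Li, I would study the variation $w=\partial_{\alpha}v$, which satisfies the linearized equation, and combine a Sturm-type counting of its zeros with an energy identity obtained by multiplying the ODE by $\sin^{N-2}\theta\,(\theta\,v'+\beta v)$ and integrating on $[0,\theta_{0}]$ for $\theta_{0}\in\Lambda$. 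The resulting quadratic identity has the same structural form as Theorem~\ref{poth}, and the coefficient $\big(\tfrac{N-3}{2}-\tfrac{N-1}{q+1}\big)$ of the gradient term is \emph{strictly negative} precisely when $q<q_3$. This sign pins down the direction in which $T(\alpha)$ moves across $\pi/2$ and rules out two solutions.

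\textbf{Main obstacle.} The hard part is Step 3: carrying out the Kwong--Li monotone-separation argument for the non-radial ODE on $[0,\pi/2]$. Unlike the Euclidean ball setting of \cite{Kwo:91,KwoLi:92}, the drift $(N-2)\cot\theta$ is singular like $1/\theta$ at $\theta=0$ but vanishes at $\theta=\pi/2$, so the weight $\sin^{N-2}\theta$ is degenerate at both endpoints in opposite ways. One has to check that the monotonicity of the Pohozaev-type functional persists with this weight and that the boundary contribution at $\theta=\pi/2$ (an analogue of the $\int_{\partial S}|\nabla' v|^{2}\langle\nabla'\phi,\nu\rangle$ term in Theorem~\ref{poth}) carries the right sign, exactly under the hypothesis $q<q_3$. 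The case $N=2$ in Theorem~\ref{thm8.1} is much softer because the ODE reduces to $v''+\ell v+v^q=0$, for which a direct first-integral / phase-plane analysis applies with no subcriticality assumption needed.
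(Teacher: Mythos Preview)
Your Steps~1 and~2 are correct and match the paper: axial symmetry is obtained by the moving-plane method on the sphere (the paper cites \cite{Pad:97}), and the ODE reduction to \eqref{8.2} is exactly as you describe.

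Step~3, however, is not what the paper does, and as written it is only a heuristic. You propose to study the shooting map $\alpha\mapsto T(\alpha)$ via the linearized solution $\partial_\alpha v$ and a Pohozaev-type multiplier $\sin^{N-2}\theta\,(\theta v'+\beta v)$. The paper uses neither of these. Instead, it makes the explicit substitution $w(\theta)=(\sin\theta)^{\alpha}v(\theta)$ with $\alpha=\tfrac{2(N-2)}{q+3}$, $\beta=\tfrac{2(N-2)(q-1)}{q+3}$, and defines an energy
\[
E(\theta)=(\sin\theta)^{\beta}\tfrac{w_\theta^2}{2}+G(\theta)\tfrac{w^2}{2}+\tfrac{w^{q+1}}{q+1},
\]
for an explicit $G$ (Lemma~\ref{lemma8.1}), which satisfies the exact identity $E_\theta=G_\theta\,\tfrac{w^2}{2}$. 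The condition $q<q_3$ (with $N\ge4$) enters here, not through the Pohozaev coefficient $\tfrac{N-3}{2}-\tfrac{N-1}{q+1}$ you mention, but because it forces $G_\theta>0$ near $\theta=0$, so that $G_\theta$ is either positive on all of $(0,\tfrac{\pi}{2})$ or changes sign exactly once.

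The contradiction argument is then purely a comparison of two hypothetical solutions, with no linearization and no shooting map. After reducing (via Lemmas~\ref{lemma8.0} and \ref{lemma8.0a}) to two solutions $v_1,v_2$ whose graphs cross exactly once, Lemma~\ref{lemma8.3} shows $v_2/v_1$ is monotone, and one compares $E_2-\gamma^2 E_1$ for a suitably chosen constant $\gamma$ (either $\gamma=v_{2,\theta}(\tfrac{\pi}{2})/v_{1,\theta}(\tfrac{\pi}{2})$ or $\gamma=v_2(c)/v_1(c)$ depending on the sign pattern of $G_\theta$). The boundary values of $E_2-\gamma^2 E_1$ at $0$ and $\tfrac{\pi}{2}$ are incompatible with the sign of its derivative, yielding the contradiction. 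This avoids precisely the ``main obstacle'' you flag: the degeneracy of $\sin^{N-2}\theta$ at the endpoints is absorbed into the choice of $\alpha,\beta$ and the verification that $E$ extends continuously to $[0,\tfrac{\pi}{2}]$ when $N\ge4$.
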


\begin{thm}\label{thm8.3}
Assume that $N=3$. Then, the Dirichlet problem \eqref{8.1} has at most one positive solution under one of the following assumptions:
\begin{itemize}
\item for every $1 < q \le 5$ and $\ell \in \RR$,
\item for every $q > 5$ and $\ell \le \frac{2(3-q)}{(q+3)(q-1)}$.
\end{itemize}
\end{thm}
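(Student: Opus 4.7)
The approach parallels Theorem~\ref{thm8.2}, but requires an additional quantitative ingredient because the spatial dimension of $S^{2}_{+}$ is only two, so the Kwong--Li scheme does not apply unconditionally for supercritical $q$. I would first show that every positive solution $v$ of \eqref{8.1} is axially symmetric with respect to $\boldsymbol{\e}_{N}$. The moving plane method on the sphere, carried out across each great circle through $\boldsymbol{\e}_{N}$, gives this symmetry in the standard way; the nonlinearity $v\mapsto\ell v+v^{q}$ is locally Lipschitz, $v$ is bounded by elliptic regularity up to $\partial S^{2}_{+}$, and a spherical Hopf lemma provides the strict comparison on the boundary of the reflected cap. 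Consequently $v=v(\theta)$ depends only on the polar angle $\theta\in[0,\pi/2]$, and \eqref{8.1} reduces to the singular Sturm--Liouville problem
\begin{equation*}
-(\sin\theta\,v')'=\sin\theta\,(\ell v+v^{q})\quad\text{on }(0,\tfrac{\pi}{2}),\qquad v'(0)=0=v(\tfrac{\pi}{2}).
\end{equation*}

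The uniqueness proof then follows the Kwong--Li scheme. Suppose $v_{1}$ and $v_{2}$ are two positive solutions. The idea is to introduce a one-parameter rescaling, e.g.~$v_{2,\alpha}=\alpha\,v_{2}$, and to count the zeros of the difference $w_{\alpha}:=v_{1}-v_{2,\alpha}$ in $(0,\tfrac{\pi}{2})$ by a Sturmian comparison applied to the linearised equation satisfied by $w_{\alpha}$. An integrated Pohozaev-type identity, obtained by testing the linearised equation against a suitable multiplier of the form $(\theta\,v'+\lambda v)\,\psi(\theta)$ with $\psi$ built in analogy with the first eigenfunction $\phi(x)=x_{N}/|x|$ used in Theorem~\ref{poth}, yields a weighted quadratic form in $v$, $v'$ and $v^{q+1}$ whose sign governs uniqueness. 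When $1<q\le q_{2}=5$ the subcritical coefficients in this form keep the correct sign for every $\ell\in\mathbb{R}$, giving unconditional uniqueness; when $q>5$ the coefficient of the $v^{2}$-term changes sign, and positivity is recovered precisely under the threshold $\ell\le\frac{2(3-q)}{(q+3)(q-1)}$, which is the break-even value produced by the identity.

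The main obstacle is the explicit choice of the multiplier $\psi$ that simultaneously absorbs the boundary contribution at the polar singularity $\theta=0$ (where $\sin\theta$ degenerates) and produces exactly the sharp bound $\frac{2(3-q)}{(q+3)(q-1)}$ in the supercritical range $q>5$; a slightly suboptimal choice either loses the unconditional case $1<q\le 5$ or only gives a weaker threshold when $q>5$. A secondary, more technical point is to justify the moving-plane argument up to the equator, which requires controlling $\nabla v$ near $\partial S^{2}_{+}$; this follows from standard elliptic regularity combined with the homogeneous Dirichlet condition.
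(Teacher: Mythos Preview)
Your reduction to the one-dimensional problem via moving planes is correct and matches the paper. After that, however, your outline is too schematic to be a proof, and the specific mechanism you describe (a Poho\v zaev-type identity with a multiplier $(\theta v'+\lambda v)\psi$) is not the one that produces the result. The paper's Poho\v zaev identity (Theorem~\ref{poth}) is used for \emph{nonexistence}, not uniqueness; the uniqueness argument rests on two different and quite concrete devices, one per case.

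For $\ell\le\frac{2(3-q)}{(q+3)(q-1)}$ the paper uses the energy identity of Lemma~\ref{lemma8.1}: with $w=(\sin\theta)^{\alpha}v$, $\alpha=\frac{2}{q+3}$, one has $E_\theta=G_\theta\,w^{2}/2$ for an explicit $G$. The threshold on $\ell$ is exactly the condition that $G_\theta<0$ on $(0,\pi/2)$; combined with the monotonicity of $v_2/v_1$ (Lemma~\ref{lemma8.3}) and the choice $\hat\gamma=v_2(0)/v_1(0)$, this forces $(E_2-\hat\gamma^{2}E_1)_\theta<0$ while the endpoint values go the wrong way. A subtlety you do not address is that for $N=3$ the energy $E$ blows up at $\theta=0$, so one must check $E_2(\theta)-\hat\gamma^{2}E_1(\theta)\to 0$ by expanding to second order.

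For $1<q\le 5$ and $\ell\ge-\frac14$ the paper does something different (closer to Kwong~\cite{Kwo:91} than to Kwong--Li): set $z=(\sin\theta)^{1/2}v$, so that $z$ is concave, and argue via the \emph{inverse functions} $Z_i(\xi)=z_{i\theta}(z_i^{-1}(\xi))$ and $Y_i(\xi)=z_{i\theta}(z_i^{-1}(\xi))\sin z_i^{-1}(\xi)$. One shows that two solutions intersecting once cannot both be increasing before, nor both decreasing after, the crossing; this forces a configuration whose maxima $m_1,m_2$ satisfy both $m_2>m_1$ and $m_1\ge m_2$. The hypotheses $q\le5$ and $\ell\ge-\frac14$ enter only to compare the right-hand sides of the ODE for $z$ along the two branches. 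Your sketch does not surface either of these ideas, nor the preliminary reduction (Lemma~\ref{lemma8.0a}) to two solutions whose graphs cross exactly once, which is what makes the monotonicity of $v_2/v_1$ available.
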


\begin{rem}
\rm
In dimension $N =3$ we do not know whether the Dirichlet problem \eqref{8.1} has a unique positive solution if $q > 5$ and $\ell > \frac{2(3-q)}{(q+3)(q-1)}$.
\end{rem}

We first show that the graphs of two positive solutions of \eqref{8.1} must cross.

\begin{lemma}\label{lemma8.0}
Assume that $v_1$ and $v_2$ are positive solutions of \eqref{8.1}. If $v_1 \le v_2$ in $S^{N-1}_+$, then $v_1 = v_2$.
\end{lemma}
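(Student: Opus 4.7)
The plan is to apply the Picone identity to the pair $(v_1, v_2)$ and exploit the assumed ordering $v_1 \le v_2$ to derive an integral inequality whose only solution is equality. Recall that for smooth functions with $u \ge 0$ and $v > 0$ one has the pointwise identity
\[
|\nabla' u|^2 - \nabla'\Big(\frac{u^2}{v}\Big) \cdot \nabla' v = \Big|\nabla' u - \frac{u}{v}\nabla' v\Big|^2 \ge 0.
\]
I would choose $u = v_1$ and $v = v_2$ and integrate this inequality over $S^{N-1}_+$, using $v_1^2/v_2$ as a test function in the equation satisfied by $v_2$.

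The argument would proceed in three steps. First, I would record that standard elliptic regularity places $v_1, v_2 \in C^2(\overline{S^{N-1}_+})$ and that the Hopf lemma gives strict negativity of the outward normal derivatives on $\partial S^{N-1}_+$. Consequently $v_1^2/v_2$ extends continuously to $\overline{S^{N-1}_+}$ with boundary value $0$ and has bounded tangential gradient up to the boundary, which makes it an admissible test function. Second, integrating the Picone inequality and discarding the (vanishing) boundary term via the divergence theorem, together with the equation \eqref{8.1} satisfied by $v_2$, yields
\[
\int_{S^{N-1}_+} |\nabla' v_1|^2 \, d\sigma \ge \int_{S^{N-1}_+} \frac{v_1^2}{v_2} (-\Delta' v_2) \, d\sigma = \ell \int_{S^{N-1}_+} v_1^2 \, d\sigma + \int_{S^{N-1}_+} v_1^2 v_2^{q-1} \, d\sigma.
\]
Third, testing \eqref{8.1} for $v_1$ against $v_1$ itself gives
\[
\int_{S^{N-1}_+} |\nabla' v_1|^2 \, d\sigma = \ell \int_{S^{N-1}_+} v_1^2 \, d\sigma + \int_{S^{N-1}_+} v_1^{q+1} \, d\sigma,
\]
and subtracting these two displays cancels the quadratic terms and leaves
\[
\int_{S^{N-1}_+} v_1^2 \bigl(v_1^{q-1} - v_2^{q-1}\bigr) \, d\sigma \ge 0.
\]

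The conclusion is then immediate from a sign argument: since $v_1 \le v_2$ and $q > 1$, the integrand is pointwise nonpositive, so it must vanish almost everywhere on $S^{N-1}_+$. Because $v_1 > 0$ throughout the open hemisphere, this forces $v_1^{q-1} = v_2^{q-1}$, hence $v_1 = v_2$. The step I expect to require the most care is the justification of the integration by parts, namely showing that $v_1^2/v_2$ produces no singular boundary contribution despite both $v_1$ and $v_2$ vanishing on $\partial S^{N-1}_+$; this is precisely what the Hopf lemma delivers, since it provides matching linear decay rates for the two solutions as one approaches the boundary and thus keeps the ratio $v_1/v_2$ bounded.
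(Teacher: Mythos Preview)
Your proof is correct, but the paper's argument is considerably simpler and avoids the boundary-regularity discussion entirely. Instead of Picone's identity, the paper tests the equation for $v_1$ against $v_2$ and the equation for $v_2$ against $v_1$; the gradient terms $\int_{S^{N-1}_+}\langle\nabla' v_1,\nabla' v_2\rangle\,d\sigma$ are identical in the two expressions, and the linear terms $\ell\int v_1 v_2$ cancel as well, so subtraction gives the \emph{equality}
\[
\int_{S^{N-1}_+} \bigl(v_1^{q-1}-v_2^{q-1}\bigr)v_1 v_2\,d\sigma = 0,
\]
from which the conclusion follows by the same sign argument you use. The advantage of this symmetric cross-testing is that the test functions $v_1,v_2$ are themselves smooth up to the boundary and vanish there, so no appeal to the Hopf lemma or to the behavior of the ratio $v_1/v_2$ is needed. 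Your Picone approach trades that simplicity for a one-sided inequality and the (admittedly routine) verification that $v_1^2/v_2$ is admissible; it works, but here the direct route is cleaner.
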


\begin{proof}
Multiplying by $v_2$ the equation satisfied by $v_1$ and integrating by parts, we get
\[
\int\limits_{S^{N-1}_+} \langle\nabla v_1, \nabla v_2 \rangle \, d\sigma=  \int\limits_{S^{N-1}_+} \big(\ell v_1 + (v_1)^q \big) v_2  \, d\sigma.
\]
Reversing the roles of $v_1$ and $v_2$, we also have
\[
\int\limits_{S^{N-1}_+} \langle\nabla v_2, \nabla v_1 \rangle \, d\sigma=  \int\limits_{S^{N-1}_+} \big(\ell v_2 + (v_2)^q \big) v_1  \, d\sigma.
\]
Subtracting these identities, we have
\[
\int\limits_{S^{N-1}_+} \big({v_1}^{q-1} - {v_2}^{q-1}\big) v_1 v_2 \, d\sigma = 0.
\]
Since the integrand is nonnegative we must have ${v_1}^{q-1} - {v_2}^{q-1} = 0$ and the conclusion follows.
\end{proof}

We consider first the case $N = 2$. The precise structure of of the set of all signed solutions defined on $\BBR$ is already established in \cite[Lemma 1.1]{BidBo:94}, see also Theorem 1.1 therein for the main result. In this paper the proof is based upon the fact that the equation is autonomous. Here we use another argument which is in the line of the one developed in the cases $N\geq 3$ studied below.

\begin{proof}[Proof of Theorem~\ref{thm8.1}]
Denoting by
\[
\gth=\arccos{\tfrac{x_2}{|x|}}, 
\]
then a solution of \eqref{8.1} satisfies
\[
\left\{
\begin{aligned}
& v_{\theta\theta}+\ell v+v^q=0\quad\text {in } \big(0,\tfrac{\pi}{2} \big),\\
& v_\theta(0)=0,\quad v(\tfrac{\pi}{2})=0.
\end{aligned}
\right.
\]
Moreover, for every $\theta \in (0, \frac{\pi}{2}]$, $v_\theta(\theta) < 0$;  indeed $v_{\theta}(\frac{\pi}{2})<0$, and if $\rho=\inf\{\theta>0:v_\theta(\theta)<0 \}>0$, then from uniqueness $\rho=\frac{\pi}{4}$, $v(\theta)=v(\frac{\pi}{2}-\theta)$, hence $v_\theta(0)> 0$, contradiction (notice that this argument is the 1-dim moving plane method). Thus, $v$ is decreasing. Let $V : [0, v(0)] \to \RR$ be the function defined by 
\begin{equation}\label{8.1a}
V(\xi) = v_\theta(v^{-1}(\xi)).
\end{equation}
Then, $V$ is of class $C^1$ in $[0, v(0))$.
Since for every $\xi \in [0, v(0))$,
\[
(v^{-1})_\xi(\xi) = \frac{1}{v_\theta(v^{-1}(\xi))} = \frac{1}{V(\xi)},
\]
we deduce that
\begin{equation}\label{8.1b}
(V^2)_\xi = 2 V V_\xi = 2 V (v_{\theta\theta}\circ v^{-1}) (v^{-1})_\xi =  2 (v_{\theta\theta}\circ v^{-1}) = -2(\ell \xi + \xi^q).
\end{equation}
Assume by contradiction that \eqref{8.1} has two distinct positive solutions, say $v_1$ and $v_2$. We may assume they are both defined in terms of the variable $\theta$. Then, there exists $c_1 \in (0, \frac{\pi}{2})$ such that $v_1(c_1) = v_2(c_1)$. Let $c_2 \in (c_1, \frac{\pi}{2}]$ be the smallest number such that $v_1(c_2) = v_2(c_2)$ (this point $c_2$ exists since ${v_1}_\theta(c_1) \ne {v_2}_\theta(c_1)$). Without loss of generality, we may assume that, for every $\theta \in (c_1, c_2)$, 
\[
v_1(\xi) < v_2(\xi).
\] 
Let $V_1$ and $V_2$ be the functions given by \eqref{8.1a} corresponding to $v_1$ and $v_2$, respectively. For $i \in \{1, 2\}$, let
\[
\alpha_i = v_1(c_i) = v_2(c_i).
\]
By \eqref{8.1b}, for every $\xi \in (\alpha_2, \alpha_1)$,
\[
({V_1}^2)_\xi(\xi) = -2(\ell \xi + \xi^q) = ({V_2}^2)_\xi(\xi).
\]
Hence, the function ${V_1}^2 - {V_2}^2$ is constant. On the other hand, since $v_1 < v_2$ and $v_1, v_2$ are both decreasing, by uniqueness of the Cauchy problem,
\[
{v_1}_\theta(c_1) < {v_2}_\theta(c_1) < 0 \quad \text{and} \quad {v_2}_\theta(c_2) < {v_1}_\theta(c_2) < 0.
\] 
Thus,
\[
{V_1}^2(\alpha_1) - {V_2}^2(\alpha_1) > 0  \quad \text{and} \quad {V_1}^2(\alpha_2) - {V_2}^2(\alpha_2) < 0.
\]
This is a contradiction. We conclude that problem \eqref{8.1} cannot have more than one positive solution.
\end{proof}

\noindent\Remark The proofs in \cite{BidBo:94} as well as the one here are valid for equation
\begin{equation}\label{genE}
v_{\theta\theta}+\ell v+g(v)=0
\end{equation}
where $g\in C^1(\BBR)$, $g(0)=0$ and $r\mapsto\frac{g(r)}{r}$ is increasing on $(0,\infty)$. In \cite[Prop 4.4]{BidJaVer:08} a more general, result is obtained.
\medskip

In order to study \eqref{8.1} in the case of higher dimensions, the first step is to rewrite the Dirichlet problem in terms of an \textsc{ode}. By an adaptation of the moving planes method to $S^{N-1}$ (see \cite{Pad:97}), any positive solution $v$ of (\ref{8.1}) depends only on the geodesic distance to the North pole:
\[
\gth=\arccos{\tfrac{x_N}{|x|}}
\]
and $v$ decreasing with respect to $\theta$. Since in this case
$$
\Gd'v=\myfrac{1}{(\sin{\gth})^{N-2}} \frac{d}{d\theta}\left((\sin{\gth})^{N-2} v_{\gth}\right),
$$
every solution of \eqref{8.1} satisfies the following \textsc{ode} in terms of the variable $\theta$:
\begin{equation}\label{8.2}
\left\{
\begin{aligned}
& v_{\theta\theta}+(N-2)\cot{\gth}\, v_\theta+\ell v+v^q=0\quad\text {in } \big(0,\tfrac{\pi}{2} \big),\\
& v_\theta(0)=0,\quad v(\tfrac{\pi}{2})=0.
\end{aligned}
\right.
\end{equation}
The heart of the matter is then to apply some ideas from Kwong~\cite{Kwo:91} and Kwong-Li~\cite{KwoLi:92}, originally dealing with positive solutions of
\begin{equation}\label{Un0}
\left\{
\begin{aligned}
& u_{rr}+ (N-2)\frac{1}{r} \, u_r + \ell u + u^q=0 \quad\text {in }(0, a),\\
& u_r(0)=0, \quad u(a)=0.
\end{aligned}
\right.
\end{equation}

By Lemma~\ref{lemma8.0} and the discussion above, the graphs of two positive solutions of \eqref{8.2} must intersect in $(0, \frac{\pi}{2})$. Of course, the number of intersection points could be arbitrarily large (but always finite in view of the uniqueness of the Cauchy problem). The next lemma allows us to reduce the problem to the case where there could be only one intersection point. The argument relies on the shooting method and continuous dependence arguments; we only give a sketch of the proof.

\begin{lemma}\label{lemma8.0a}
Assume that \eqref{8.2} has two distinct positive solutions. Then, there exists two positive solutions  of \eqref{8.2} the graph of which  intersect only once in the interval $(0, \frac{\pi}{2})$.
\end{lemma}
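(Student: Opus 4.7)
The plan is the shooting method combined with continuous dependence on initial data. For each $a > 0$, let $v_a$ denote the unique local solution of the Cauchy problem associated with the ODE in \eqref{8.2} having $v_a(0) = a$ and $(v_a)_\theta(0) = 0$, and set
\[
A = \{a > 0 : v_a \text{ is a positive solution of \eqref{8.2}}\}.
\]
By hypothesis $|A| \ge 2$. A preliminary step is to check that $A$ is relatively closed in $(0,\infty)$: if $a_n \to a$ with $a_n \in A$, then continuous dependence gives $v_{a_n} \to v_a$ uniformly on $[0,\pi/2]$, and Hopf's lemma applied to the linear ODE satisfied by the limit $v_a$ excludes $v_a$ from touching zero in $(0,\pi/2)$.

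Next, pick $a_0 \in A$ with some element of $A$ strictly larger and define $a^* = \inf\{a \in A : a > a_0\}$. Closedness gives $a^* \in A$, and in the generic situation $a^* > a_0$ with $A \cap (a_0, a^*) = \emptyset$. I claim that $v_0 := v_{a_0}$ and $v_* := v_{a^*}$ cross exactly once on $(0, \pi/2)$. Their intersection set is finite (uniqueness for the Cauchy problem) and nonempty (Lemma~\ref{lemma8.0}). To isolate a single crossing, I would track the difference $w_a := v_a - v_0$ for $a \in [a_0, a^*]$: subtracting the equations for $v_a$ and $v_0$ and applying the mean value theorem to $r \mapsto r^q$ yields a linear Sturm-type ODE
\[
w_a'' + (N-2)\cot\theta\, w_a' + c_a(\theta)\, w_a = 0,
\]
whose zeros in $(0,\pi/2)$ are simple for $a > a_0$ and depend continuously on $a$. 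Continuous dependence, together with the emptiness of $A \cap (a_0, a^*)$, shows that each intermediate $v_a$ stays strictly positive on $[0,\pi/2]$, so $w_a(\pi/2) > 0$ for every $a \in (a_0,a^*)$. Since $w_a(0) = a - a_0 > 0$ and $(w_a)_\theta(0) = 0$, zeros cannot enter $(0,\pi/2)$ at $\theta = 0$; they can enter only from $\theta = \pi/2$. The transition from $w_a(\pi/2) > 0$ on $(a_0, a^*)$ to $w_{a^*}(\pi/2) = 0$ forces exactly one simple zero of $w_{a^*}$ to appear in $(0, \pi/2)$.

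The main obstacle is the degenerate case $a^* = a_0$, where $A$ accumulates at $a_0$ from the right and $w_a$ collapses to the zero function. I would resolve this by working with the normalized variation $\widehat w_a := (v_a - v_0)/(a - a_0)$, which converges as $a \downarrow a_0$ to the solution of the linearization of \eqref{8.2} at $v_0$ with $\widehat w(0) = 1$, $\widehat w_\theta(0) = 0$; intersections of $v_a$ with $v_0$ for $a \in A$ near $a_0$ are close to the zeros of this linearized solution in $(0,\pi/2)$. If that zero set is a singleton, a nearby element of $A$ already yields the desired pair; if not, I would iterate the argument along a connected component of $A$, using Sturm comparison applied to the ODE for $w_a$ to control how the crossing number with $v_0$ varies with $a$ and to produce an element of $A$ realising precisely one crossing with $v_0$.
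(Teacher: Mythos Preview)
Your strategy of isolating two ``adjacent'' elements of $A$ is natural, but the core of the argument has a real gap. The unjustified step is the assertion that for $a\in(a_0,a^*)$ the intermediate solution $v_a$ stays strictly positive on $[0,\pi/2]$, so that $w_a(\pi/2)=v_a(\pi/2)>0$. Emptiness of $A\cap(a_0,a^*)$ only says $v_a$ is \emph{not} a positive solution of \eqref{8.2}; this is equally compatible with the first zero of $v_a$ occurring strictly before $\pi/2$. If you let $z(a)$ denote that first zero, continuity together with $z(a_0)=z(a^*)=\pi/2$ and $z(a)\neq\pi/2$ on $(a_0,a^*)$ forces either $z>\pi/2$ throughout or $z<\pi/2$ throughout; you treat only the first alternative and give no reason to exclude the second. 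Even granting $w_a(\pi/2)>0$ on $(a_0,a^*)$, the conclusion that $w_{a^*}$ has \emph{exactly one} zero in $(0,\pi/2)$ does not follow: the number of zeros of $w_a$ in $(0,\pi/2)$ for $a$ just above $a_0$ is the number of zeros of the linearised solution $\widehat w$ there, which you have not shown to be zero; those zeros persist to $a^*$, and the transition $w_a(\pi/2)\to 0$ can add at most one more. Your handling of the degenerate case $a^*=a_0$ inherits the same difficulty.

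The paper's argument avoids these issues by shooting in the opposite direction and not restricting to $A$. It fixes the solution $v_2$ with the larger initial value and lets $\alpha$ decrease continuously from $v_1(0)$ toward $0$, tracking the first two intersections $\sigma_1(\alpha)<\sigma_2(\alpha)$ of $v^\alpha$ with $v_2$. The endpoint $\alpha\to 0$ is decisive: $v^\alpha\to 0$ in $C^1$, so if $\sigma_2(\alpha)<\pi/2$ for every $\alpha$ one gets $\sigma_1(\alpha),\sigma_2(\alpha)\to\pi/2$, and a mean-value argument on $v_2-v^\alpha$ then forces $(v^\alpha)_\theta$ and $(v_2)_\theta$ to agree at a point tending to $\pi/2$, contradicting $(v_2)_\theta(\pi/2)<0$ while $(v^\alpha)_\theta\to 0$. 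Hence some $\tilde\alpha\in(0,v_1(0))$ has $\sigma_2(\tilde\alpha)=\pi/2$, which automatically yields $v^{\tilde\alpha}(\pi/2)=v_2(\pi/2)=0$; this $v^{\tilde\alpha}$ is the desired second solution of \eqref{8.2} meeting $v_2$ exactly once. The robust limit $\alpha\to 0$ replaces the delicate control of the zero set that your approach would require.
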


\begin{proof}[Sketch of the proof]
For each $\alpha >0$ let $v^\alpha$ be the (unique) maximal solution of
\begin{equation*}
\left\{
\begin{aligned}
& v_{\theta\theta}+(N-2)\cot{\gth}\, v_\theta+\ell v+|v|^{q-1}v=0\quad\text {in } I_{\alpha}=(0,m_{\ga})\subset \big(0,\pi \big),\\
& v_\theta(0)=0,\quad v(0)= \alpha.
\end{aligned}
\right.
\end{equation*}
Then $v=v^\alpha$ is obtained by the contraction mapping principle on some interval $[0,\tau_{\alpha}]$, by the formula
\begin{equation}\label{fix}
v(\theta)=\alpha-\myint{0}{\theta}(\sin\sigma)^{2-N}\myint{0}{\sigma}(\sin\tau)^{N-2}(\ell v+|v|^{q-1}v)(\tau)d\tau d\sigma.
\end{equation}
It is extended to its maximal interval $I_{\alpha}$, and by a standard concavity argument, $m_{\ga}=\sup I_{\alpha}=\pi$. Notice that only a solution which vanishes at $\theta=\frac{\pi}{2}$ can be extended by continuity at $\theta=\pi$. By a standard argument $v^{\alpha}(\theta)$ depends continuously on $\alpha$, uniformly when $\theta\in [0,\pi-\epsilon_{0}]$ for any $\epsilon_{0}>0$ small enough, and $\epsilon_{0}=\frac{\pi}{4}$ will be good enough. Since $v_{\theta}=v^{\alpha}_{\theta}$ satisfies
\begin{equation*}
v_{\theta}(\theta)=-(\sin\theta)^{2-N}\myint{0}{\theta}(\sin\tau)^{N-2}(\ell v+|v|^{q-1}v)(\tau)d\tau,
\end{equation*}
it follows that $v^{\alpha}$ depends continuously of $\alpha$ in the $C^{1}([0,\frac{3\pi}{4}])$-topology. If $v_{1}$ and $v_{2}$ are two distinct solutions of (\ref{8.2}) we can suppose that $v_{2}(0)>v_{1}(0)$. We assume now that their graph have more than one intersection and denote by $\sigma_{1}$ and $\sigma_{2}$ respectively their first and second intersections in $(0,\frac{\pi}{2})$. If $\ga\in (0,v_{2}(0))$, we denote by 
$\gs_{j}(\ga)$, $j=1,2,...$, the finite and increasing sequence of intersections, if any, of the graphs of $v_{2}$ and $v^\ga$ in $(0,\frac{\pi}{2})$. Then $\gs_{1}(v_{1}(0))=\gs_{1}$ and $\gs_{2}(v_{1}(0))=\gs_{2}$. Since the derivatives of $v_{2}$ and $v^\ga$ at $\gs_{j}(\ga)$ differ, it follows from implicit function theorem that the mapping $\ga\mapsto \gs_{j}(\ga)$ is continuous. Then,  if $\sigma_{2} (\alpha)<\frac{\pi}{2}$ for any $\ga\in (0,v_{1}(0))$, $\sigma_{1} (\alpha)$ satisfies the same upper bound,. Since $v^\alpha\to 0$ uniformly on $[0,\frac{\pi}{2}]$ and $v_{2\,\theta}(\frac{\pi}{2})<0$, this implies 
$$\lim_{\ga\to 0}\sigma_{1} (\alpha)=\sigma_{2} (\alpha)=\frac{\pi}{2}.$$
By the mean value theorem there exists $\tau(\alpha)\in (\sigma_{2} (\alpha),\sigma_{1} (\alpha))$ where 
$v_{2\,\theta}(\tau(\alpha))=v^\alpha_{\theta}(\tau(\alpha))$. This is impossible as $\tau(\alpha)\to 0$ and 
$$\lim_{\ga\to 0}v_{2\,\theta}(\tau(\alpha))=v_{2,\theta}(\frac{\pi}{2})\neq \lim_{\ga\to 0}v^\alpha_{\theta}(\tau(\alpha))=0.$$\smallskip

Thus there exists $\tilde\ga\in (0,v_{1}(0))$ such that $\sigma_{2} (\tilde\ga)=\frac{\pi}{2}$. Moreover $\sigma_{1} (\tilde\ga)<\frac{\pi}{2}$ otherwhile we would have $v_{2\,\theta}(\frac{\pi}{2})=v_{\theta}^{\tilde\ga}\frac{\pi}{2}$ as above, and $v_{2}=v^{\tilde\ga}$.
Therefore $v^{\tilde\ga}$ is a solution of (\eqref{8.2}) which
 intersects only once $v_{2}$ in $(0,\frac{\pi}{2})$.
\end{proof}

The next result is standard but we present a proof for the convenience of the reader.

\begin{lemma}\label{lemma8.3}
Assume that $v_1$ and $v_2$ are positive solutions of \eqref{8.2} whose graphs coincide at a single point of $(0, \frac{\pi}{2})$. If $v_1(0) > v_2(0)$, then the function
\[
\theta \in (0, \tfrac{\pi}{2}) \longmapsto \frac{v_2(\theta)}{v_1(\theta)}
\]
is increasing.
\end{lemma}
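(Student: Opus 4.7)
The plan is to exhibit the Wronskian-type quantity $W(\theta) := v_1(\theta)(v_2)_\theta(\theta) - v_2(\theta)(v_1)_\theta(\theta)$ and to prove that $W > 0$ on $(0,\tfrac{\pi}{2})$. Since
\[
\bigl(v_2/v_1\bigr)_\theta \;=\; W/v_1^{\,2}
\]
and $v_1 > 0$ on that interval, this is equivalent to the monotonicity claim.

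Substituting the equation \eqref{8.2} for each $v_i$ into the identity $W_\theta = v_1 (v_2)_{\theta\theta} - v_2 (v_1)_{\theta\theta}$ produces the cancellation of the $\ell$-term and gives
\[
W_\theta + (N-2)(\cot\theta)\, W \;=\; -\,v_1 v_2\,\bigl(v_2^{\,q-1} - v_1^{\,q-1}\bigr),
\]
equivalently, after multiplication by the integrating factor $(\sin\theta)^{N-2}$,
\[
\tfrac{d}{d\theta}\bigl[(\sin\theta)^{N-2} W\bigr] \;=\; -(\sin\theta)^{N-2}\, v_1 v_2 \bigl(v_2^{\,q-1} - v_1^{\,q-1}\bigr).
\]
Let $\theta_0 \in (0,\tfrac{\pi}{2})$ denote the unique intersection point. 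By uniqueness for the Cauchy problem associated with \eqref{8.2}, the graphs of $v_1$ and $v_2$ cannot be tangent at $\theta_0$ (otherwise they would coincide), so the sign of $v_1 - v_2$ actually reverses there. Combined with $v_1(0) > v_2(0)$ and $v_1(\tfrac{\pi}{2}) = v_2(\tfrac{\pi}{2}) = 0$, this forces $v_1 > v_2 > 0$ on $(0,\theta_0)$ and $0 < v_1 < v_2$ on $(\theta_0, \tfrac{\pi}{2})$. Since $q > 1$, the sign of $v_2^{\,q-1} - v_1^{\,q-1}$ agrees with that of $v_2 - v_1$; hence $(\sin\theta)^{N-2} W$ is strictly increasing on $(0,\theta_0]$ and strictly decreasing on $[\theta_0, \tfrac{\pi}{2})$.

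Finally, the boundary data give $W(0) = 0$ (from the initial conditions $(v_1)_\theta(0) = (v_2)_\theta(0) = 0$) and $W(\tfrac{\pi}{2}) = 0$ (from the Dirichlet conditions $v_1(\tfrac{\pi}{2}) = v_2(\tfrac{\pi}{2}) = 0$), so $(\sin\theta)^{N-2} W$ vanishes at both endpoints of $(0,\tfrac{\pi}{2})$. A continuous function that is strictly increasing and then strictly decreasing between two zeros must be strictly positive on the interior; hence $W > 0$ on $(0,\tfrac{\pi}{2})$, proving the lemma. The only delicate ingredient is the transversality of the crossing at $\theta_0$, which is immediate from Cauchy uniqueness for \eqref{8.2}; everything else reduces to a linear first-order ODE for $W$ with an integrating factor. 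The argument is uniform in $N \ge 2$ and independent of $\ell$, which is convenient for the applications to \eqref{8.1} that follow.
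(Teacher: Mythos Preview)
Your proof is correct and essentially identical to the paper's own argument: the paper defines the same Wronskian $J = v_1 (v_2)_\theta - v_2 (v_1)_\theta$, derives the same first-order equation with integrating factor $(\sin\theta)^{N-2}$, and concludes positivity of $J$ from the same increase/decrease pattern between the endpoint zeros. The only differences are notational (their $J$ and $\sigma$ are your $W$ and $\theta_0$).
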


\begin{proof}
Let $J : [0, \frac{\pi}{2}] \to \RR$ be the function defined as $J = v_1 {v_2}_\theta - v_2 {v_1}_\theta$. To prove the lemma, it suffices to show that $J > 0$ in $(0, \frac{\pi}{2})$. Using the equations satisfied by $v_1$ and $v_2$, one finds
\[
J_\theta = - (N-2) \cot{\theta} J + \big( {v_1}^{q-1} - {v_2}^{q-1} \big) v_1 v_2.
\]
Thus,
\[
\frac{1}{(\sin{\theta})^{N-2}} \big( (\sin{\theta})^{N-2} J \big)_\theta = \big( {v_1}^{q-1} - {v_2}^{q-1} \big) v_1 v_2.
\]
Let $\sigma \in (0, \frac{\pi}{2})$ be such that $v_1(\sigma) = v_2(\sigma)$. Since ${v_1}_\theta(\sigma) \ne {v_2}_\theta(\sigma)$, we have $v_1 > v_2$ in $(0, \sigma)$ and $v_1 < v_2$ in $(\sigma, \frac{\pi}{2})$, we conclude that the function
\[
\theta \in [0, \tfrac{\pi}{2}] \longmapsto (\sin{\theta})^{N-2} J(\theta)
\]
is increasing in $(0, \sigma)$ and decreasing in $(\sigma, \frac{\pi}{2})$. Since it vanishes at $0$ and $\frac{\pi}{2}$, we have 
\[
(\sin{\theta})^{N-2} J > 0 \quad \text{in $(0, \tfrac{\pi}{2})$.}
\]
Thus $J > 0$ in $(0, \tfrac{\pi}{2})$ and the conclusion follows.
\end{proof}

The following identity will be needed in the proofs of Theorems~\ref{thm8.2} and \ref{thm8.3}.
\begin{lemma}\label{lemma8.1}
Let $v$ be a solution of \eqref{8.2}, $\ga = \frac{2(N-2)}{q+3}$ and $\gb= \frac{2(N-2)(q-1)}{q+3}$.
Set
\begin{equation}\label{8.3a}
w(\gth)=(\sin{\theta})^\alpha \,v(\gth)
\end{equation}
Let $E:(0,\frac{\pi}{2})\mapsto \BBR$ and $G : (0, \frac{\pi}{2}) \to \RR$
be the functions defined by
\begin{equation}\label{8.3a'}
E(\theta)=(\sin\theta)^{\beta}\frac{w_{\gth}^2}{2}
+G(\gth)\frac{w^2}{2}+\frac{w^{q+1}}{q+1},
\end{equation}
\begin{equation}\label{8.2a}
G(\gth)=\Big(\big(\ga(N-2-\ga)+\ell \big)(\sin{\gth})^2 +\ga(\ga+3-N)\Big)(\sin\gth)^{\gb-2}.
\end{equation}
Then,
\begin{equation}\label{X}
E_{\gth}=G_{\gth}\frac{w^2}{2}.
\end{equation}
\end{lemma}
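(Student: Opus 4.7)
The plan is a direct computation: transform the ODE for $v$ into an ODE for $w$ whose structure makes the identity $E_\theta = G_\theta w^2/2$ immediate upon multiplication by $w_\theta$.

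First, I would insert $v(\theta) = (\sin\theta)^{-\alpha} w(\theta)$ into \eqref{8.2}, expand $v_\theta$ and $v_{\theta\theta}$ by the Leibniz rule, and then multiply the whole equation through by $(\sin\theta)^{\alpha+\beta}$. The choice of multiplier is forced by the relation $\beta = \alpha(q-1)$, since this makes the nonlinear term reduce to $w^q$ with coefficient exactly $1$: indeed $(\sin\theta)^{\alpha+\beta} v^q = (\sin\theta)^{\alpha+\beta-\alpha q} w^q = w^q$. After collecting powers of $\sin\theta$ and $\cos\theta$ and using $\cos^2\theta = 1 - \sin^2\theta$ to split the resulting $w$-coefficient, one obtains
\[
(\sin\theta)^\beta w_{\theta\theta} + (N - 2 - 2\alpha)(\sin\theta)^{\beta-1}\cos\theta\, w_\theta + G(\theta)\, w + w^q = 0,
\]
with $G$ exactly as in \eqref{8.2a}. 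The two algebraic checks are that $\alpha(\alpha - N + 3)$ and $\alpha(N-2-\alpha) + \ell$ appear, respectively, as the constant part and the $\sin^2\theta$-part of the coefficient of $w$ (before multiplying by $(\sin\theta)^{\beta-2}$), and that the coefficient of $w_\theta$ simplifies via the key identity
\[
N - 2 - 2\alpha \;=\; (N-2)\Bigl(1 - \tfrac{4}{q+3}\Bigr) \;=\; \tfrac{(N-2)(q-1)}{q+3} \;=\; \tfrac{\beta}{2}.
\]

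With this normalization, the transformed equation reads
\[
(\sin\theta)^\beta w_{\theta\theta} + \tfrac{\beta}{2}(\sin\theta)^{\beta-1}\cos\theta\, w_\theta + G(\theta) w + w^q = 0.
\]
Multiplying by $w_\theta$ gives an exact derivative: the first two terms coalesce into $\frac{d}{d\theta}\bigl[(\sin\theta)^\beta w_\theta^2/2\bigr]$ precisely because the $w_\theta$-coefficient equals half the derivative of the $w_{\theta\theta}$-coefficient; the nonlinear term yields $\frac{d}{d\theta}[w^{q+1}/(q+1)]$; and $Gww_\theta = \frac{d}{d\theta}[Gw^2/2] - G_\theta w^2/2$. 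Summing produces $E_\theta = G_\theta w^2/2$.

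The main obstacle is purely computational: keeping track of the numerous terms produced by differentiating $(\sin\theta)^{-\alpha}w$ twice and verifying that, after multiplication by $(\sin\theta)^{\alpha+\beta}$, the coefficient of $w_\theta$ is exactly $\beta/2$ times $(\sin\theta)^{\beta-1}\cos\theta$. This compatibility between the critical exponents $\alpha$ and $\beta$ is what makes the energy method work cleanly, and once it is in hand the identity \eqref{X} drops out with no further work.
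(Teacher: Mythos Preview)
Your proposal is correct and follows essentially the same approach as the paper: derive the transformed equation for $w$ with the key simplifications $\beta=\alpha(q-1)$ and $N-2-2\alpha=\beta/2$, then multiply by $w_\theta$ and recognize the three exact derivatives. The only cosmetic difference is that the paper first writes the $w$-equation and then multiplies by $(\sin\theta)^\beta$, whereas you multiply the substituted $v$-equation by $(\sin\theta)^{\alpha+\beta}$ in one step; these are the same computation.
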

\begin{proof}
Let $w : (0, \frac{\pi}{2}) \to \RR$ be the function defined by \eqref{8.3a}.
Then,
\begin{equation*}
w_{\theta\theta} + (N-2-2\ga)\cot{\theta} \,w_\theta + \left(\ga(N-2-\ga)+\ell +\frac{\ga(\ga+3-N)}{(\sin{\theta})^2}\right)w + \frac{w^q}{(\sin{\theta})^{\ga(q-1)}} =0.
\end{equation*}
Multiplying this identity by $(\sin{\gth})^\gb$, we get
$$
(\sin{\gth})^\gb\, w_{\theta\theta} + (N-2-2\ga)(\sin{\theta})^{\gb-1} \cos{\theta} \, w_\theta
+ G(\theta) \, w + (\sin{\gth})^{\gb-\ga(q-1)}w^q =0
$$
where $G$  is defined by \eqref{8.2a}. We now observe that $\alpha$ and $\beta$ satisfy
\[
N-2-2\ga = \frac{\beta}{2} \quad \text{and} \quad \gb-\ga(q-1) = 0.
\]
The identity satisfied by $w$ becomes
\begin{equation*}
(\sin{\gth})^\gb\, w_{\theta\theta} + \frac{\beta}{2} (\sin{\theta})^{\gb-1} \cos{\theta} \, w_\theta
+ G(\theta) \, w + w^q =0.
\end{equation*}
Since
$$
\frac{d}{d\gth}\left((\sin{\gth})^\gb\myfrac{(w_\theta)^2}{2}\right)=\left((\sin{\gth})^\gb\, w_{\theta\theta}+ \frac{\gb}{2}(\sin{\gth})^{\gb-1} \cos{\gth} \, w_\theta \right)w_\theta
$$
and
$$
\myfrac{d}{d\gth}\left(G(\gth)\myfrac{w^2}{2}\right)= G(\gth)ww_\theta + G_\theta(\gth)\frac{w^2}{2}
$$
identity (\ref{X}) follows.
\end{proof}

The following proof is inspired from Kwong-Li~\cite{KwoLi:92}.

\begin{proof}[Proof of Theorem~\ref{thm8.2}]
We use the notation of Lemma~\ref{lemma8.1}. We observe that $E$ can be continuously extended at $0$ and $\frac{\pi}{2}$. This is clear at $\frac{\pi}{2}$, where we take 
\begin{equation}\label{Y}
E(\tfrac{\pi}{2}) = \frac{(w_\theta(\tfrac{\pi}{2}))^2}{2} = \frac{(v_\theta(\tfrac{\pi}{2}))^2}{2}.
\end{equation}
To reach the conclusion at $0$, it suffices to observe that for every $\theta \in (0, \frac{\pi}{2})$,
\[
\begin{split}
(\sin{\theta})^\beta(w_\theta(\theta))^2 
& = (\sin{\theta})^\beta\Big( \alpha (\sin{\theta})^{\alpha - 1} \cos{\theta} \, v(\theta) + (\sin{\theta})^\alpha\, v_\theta(\theta) \Big)^2\\
& = (\sin{\theta})^{2\alpha + \beta -2} \Big( \alpha \cos{\theta} \, v(\theta) + \sin{\theta} \, v_\theta(\theta) \Big)^2.
\end{split}
\]
Since $N \ge 4$,
\[
2\alpha + \beta - 2 = \tfrac{2(N-3)}{q+3} \big( q + \tfrac{N-5}{N-3} \big) > 0,
\]
the right-hand side of the previous expression converges to $0$ as $\theta \to 0$. We can then set
$E(0) = 0$. 
Notice that
\[
G_\theta(\gth)
= \Big[\big(\ga(N-2-\ga)+\ell\big)\gb (\sin{\gth})^2 + \ga(\ga+3-N)(\gb-2)\Big](\sin{\gth})^{\gb-3}\cos{\gth}.
\]
By the choices of $\alpha$ and $\beta$,
\[
\ga(\ga+3-N)(\gb-2) = \tfrac{4(N-2)(N-3)^2}{(q+3)^3}\big( q + \tfrac{N-5}{N-3} \big) \big( \tfrac{N+1}{N-3} - q  \big).
\]
Since $N \ge 4$ and $1 < q < \frac{N+1}{N-3}$, this quantity is positive. Hence, there exists $\eps > 0$ such that
\[
G_\theta (\theta)>0 \quad \forall \theta \in (0, \eps).
\]
In view of the expression of $G_\theta$, we have the following possibilities: either
\begin{itemize}
\item[$(i)$] $G_\theta > 0$ in $(0, \frac{\pi}{2})$,
\end{itemize}
or
\begin{itemize}
\item[$(ii)$] there exists $c\in (0, \frac{\pi}{2})$ such that $G_\theta > 0$ in $(0,c)$ and $G_\theta < 0$ in  $(c, \frac{\pi}{2})$.
\end{itemize}
Assume by contradiction that \eqref{8.1} has more than one solution, hence by Lemma~\ref{lemma8.0a} problem \eqref{8.2} has two positive solutions $v_1$ and $v_2$ whose graphs intersect exactly once in the interval $(0, \frac{\pi}{2})$. Without loss of generality, we may assume that $v_1(0) > v_2(0)$. For $i \in \{1, 2\}$, define $w_i$ and $E_i$ accordingly.

\medskip
First, assume that $G$ satisfies property $(i)$ above. 
Let
\[
\gamma = \frac{{v_2}_\theta(\frac{\pi}{2})}{{v_1}_\theta(\frac{\pi}{2})}.
\]
We have from (\ref{Y})
\begin{equation}\label{Z}
(E_2 - \gamma^2 E_1)(0) = 0=(E_2 - \gamma^2 E_1)(\frac{\pi}{2}).
\end{equation}
On the other hand, by Lemma~\ref{lemma8.3} the function
\[
\theta \in (0, \tfrac{\pi}{2}) \longmapsto \frac{v_2(\theta)}{v_1(\theta)}
\]
is increasing. In particular, for every $\theta \in [0, \frac{\pi}{2})$,
\[
\frac{v_2(\theta)}{v_1(\theta)} < \lim_{\theta \to \frac{\pi}{2}-}{\frac{v_2(\theta)}{v_1(\theta)}} = \frac{{v_2}_\theta(\frac{\pi}{2})}{{v_1}_\theta(\frac{\pi}{2})} = \gamma.
\]
Hence,
\[
(w_2)^2 - \gamma^2 (w_1)^2 = (\sin{\theta})^{2\alpha} \big( (v_2)^2 - \gamma^2 (v_1)^2 \big) < 0 \quad \text{in $\big(0, \tfrac{\pi}{2}\big)$}.
\]
Thus, by Lemma~\ref{lemma8.1} and by assumption $(i)$, we have for every $\theta \in (0, \frac{\pi}{2})$,
\[
(E_2 - \gamma^2 E_1)_\theta(\theta) = G_\theta(\theta) \big( (w_2)^2 - \gamma^2 (w_1)^2 \big) < 0.
\]
This contradicts (\ref{Z}). Therefore, problem \eqref{8.1} cannot have two distinct positive solutions if $G$ satisfies $(i)$.

\medskip
Next, we assume that $G$ satisfies property $(ii)$ for some point $c$.
Let
\[
\tilde\gamma = \frac{v_2(c)}{v_1(c)}.
\]
As in the previous case,
$(E_2 - \tilde\gamma^2 E_1)(0) = 0$.
By Lemma~\ref{lemma8.3}, we have
\[
\frac{v_2}{v_1} < \tilde\gamma \quad \text{in $(0, c)$} \quad \text{and} \quad \frac{v_2}{v_1} > \tilde\gamma \quad \text{in $(c, \tfrac{\pi}{2})$}.
\]
Hence $ (E_2 - \tilde\gamma^2 E_1)(\tfrac{\pi}{2}) > 0$.
By Lemma~\ref{lemma8.1} and by assumption $(ii)$, we have for every $\theta \in (0, \frac{\pi}{2})$,
\[
(E_2 - \tilde\gamma^2 E_1)_\theta(\theta) = G_\theta(\theta) \big( (w_2)^2 - \tilde\gamma^2 (w_1)^2 \big) \leq 0.
\]
This is still a contradiction. Therefore, if $G$ satisfies $(ii)$, then problem \eqref{8.1} has a unique positive solution. The proof of Theorem~\ref{thm8.2} is complete.
\end{proof} 

When $N=3$, the proof of uniqueness of positive solutions of \eqref{8.1} is inspired from Kwong-Li~\cite {KwoLi:92} (Case~1 below) and Kwong~\cite{Kwo:91} (Case~2 below).

\begin{proof}[Proof of Theorem~\ref{thm8.3}]
We split the proof in two cases:

\smallskip
\noindent
\textit{Case~1.} $q > 1$ and  $\ell \le \frac{2(3-q)}{(q+3)(q-1)}$.
\smallskip

Let $G : (0, \frac{\pi}{2}) \to \RR$ be the function defined by \eqref{8.2a}. Since $N = 3$, we have
$\alpha = \frac{2}{q+3}$ and $\beta = \frac{2(q-1)}{q+3}$. Thus,
\[
\ga(\ga+3-N)(\gb-2)=\ga^2(\gb-2)= - \tfrac{32}{(q+3)^3} <0.
\]
Moreover, since by assumption  $\ell \le \frac{2(3-q)}{(q+3)(q-1)}$, we have
\[
\big(\ga(N-2-\ga)+\ell\big)\gb + \ga(\ga+3-N)(\gb-2) = \tfrac{2(q-1)}{q+3} \Big[ \tfrac{2(q-3)}{(q+3)(q-1)} + \ell \Big] \le 0.
\]
Therefore, $G$ satisfies
\begin{itemize}
\item[$(iii)$] $G_\theta < 0$ in $(0, \frac{\pi}{2})$.
\end{itemize}
We still consider the function $E$ defined by (\ref{8.3a'}), and astisfying (\ref{X}).
We observe that $E$ can still be continuously extended at $\frac{\pi}{2}$ by (\ref{Y}), 
but not at $0$ since $E(\theta)$ diverges to $+\infty$ as $\theta \to 0$.

\smallskip
Assume by contradiction that \eqref{8.1} has more than one solution, hence as above problem \eqref{8.2} has two positive solutions $v_1$ and $v_2$ whose graphs intersect exactly once in the interval $(0, \frac{\pi}{2})$, and $v_1(0) > v_2(0)$. For $i \in \{1, 2\}$, define $w_i$ and $E_i$ accordingly.

Let
\[
\hat\gamma = \frac{v_2(0)}{v_1(0)}.
\]
By Lemma~\ref{lemma8.3} we find
\[
(w_2)^2 - \hat\gamma^2 (w_1)^2 = (\sin{\theta})^{2\alpha} \big( (v_2)^2 - \hat\gamma^2 (v_1)^2 \big) > 0 \quad \text{in } (0, \tfrac{\pi}{2}).
\]
By Lemma~\ref{lemma8.1} and by assumption $(iii)$, we have for every $\theta \in (0, \frac{\pi}{2})$,
\begin{equation}\label{8.3xbis}
(E_2 - \hat\gamma^2 E_1)_\theta(\theta) = G_\theta(\theta) \big( (w_2)^2 - \hat\gamma^2 (w_1)^2 \big) < 0.
\end{equation}
By Lemma~\ref{lemma8.3},
\[
(E_2 - \hat\gamma^2 E_1)(\tfrac{\pi}{2}) =  \frac{({v_2}_\theta(\tfrac{\pi}{2}))^2 - \hat\gamma^2 ({v_1}_\theta(\tfrac{\pi}{2}))^2}{2} > 0.
\]
Although $E_1$ and $E_2$ cannot be continuously extended at $0$, one checks that
\[
\lim_{\theta \to 0}{\big(E_2(\theta) - \hat\gamma^2 E_1(\theta) \big)} = 0,
\]
by expanding the $v_{i}$ up to the order $2$ at $\theta=0$. 
This contradicts \eqref{8.3xbis}. Therefore, equation \eqref{8.1} has at most one positive solution.

\smallskip
\noindent
\textit{Case~2.} $1 < q \le 5$ and  $\ell > \frac{2(3-q)}{(q+3)(q-1)}$.
\smallskip

Since $1 < q \le 5$, we have $\ell>-\frac{1}{8}$, in particular $\ell \ge - \frac{1}{4}$. The remaining of the argument only requires $1 < q \le 5$ and $\ell \ge - \frac{1}{4}$.\\
Let $z : (0, \frac{\pi}{2}) \to \RR$ be the function defined as
\[
z(\gth)=(\sin\gth)^\frac{1}{2} \,v(\gth).
\] 
Then, $z$ satisfies
\begin{equation}\label{8.5a}
z_{\theta\theta} + \bigg(\ell+\frac{1}{4}+ \frac{1}{4(\sin{\gth})^2}\bigg) z + \frac{z^q}{(\sin\gth)^{\frac{q-1}{2}}} = 0.
\end{equation}

Assume by contradiction that equation  \eqref{8.2} has two positive distinct solutions $v_{1}$ and $v_{2}$ intersecting at some point $\sigma_{0}\in (0,\frac{\pi}{2})$, with $v_{1}(0)>v_{2}(0)$. Define $z_{1}$ and $z_{2}$ accordingly. Then $z_{1}>z_{2}$ on $(0,\sigma_{0})$, $z_{1}<z_{2}$ on $(\sigma_{0},\frac{\pi}{2})$ and $z_{1}(0)=z_{2}(0)=z_{1}(\frac{\pi}{2})=z_{2}(\frac{\pi}{2})=0$. let $\xi_{0}=z_{1}(\sigma_{0})=z_{2}(\sigma_{0})$.
\smallskip

As a first claim, we show that $z_1$ and $z_2$ cannot be both decreasing in $[\sigma_0, \frac{\pi}{2}]$. Indeed, if it holds,  we may consider their inverses $z_i^{-1} : [0, \xi_0] \to [\sigma_0,\frac{\pi}{2}]$. For $i \in \{1, 2\}$, let $Z_i : [0, \xi_0] \to \RR$ be the function given by
\[
Z_i(\xi) = {z_i}_\theta(z_i^{-1}(\xi))
\]
($Z_i$ is well-defined since $\sigma_1 > 0$). Since
\[
{z_1}_\theta(\sigma_0) < {z_2}_\theta(\sigma_0) < 0 \quad \text{and} \quad {z_2}_\theta(\frac{\pi}{2}) < {z_1}_\theta(\frac{\pi}{2}) < 0,
\]
we have
\[
({Z_1}(\xi_0))^2 > ({Z_2}(\xi_0))^2 \quad \text{and} \quad ({Z_1}(0))^2 < ({Z_2}(0))^2 .
\]
From the Mean value theorem, there exists $\eta \in (0, \xi_0)$ such that
\begin{equation}\label{8.6}
({Z_1}^2)_\xi(\eta) > ({Z_2}^2)_\xi(\eta).
\end{equation}
On the other hand, for $i \in \{1, 2\}$ and for every $\xi \in (0, \xi_0)$,
\begin{equation}\label{H}
Z_i {Z_i}_\xi = {z_i}_{\theta\theta} (z_i^{-1}(\xi)) = - \bigg(\ell+\frac{1}{4}+ \frac{1}{4(\sin{z_i^{-1}(\xi)})^2}\bigg) \xi - \frac{\xi^q}{(\sin{z_i^{-1}(\xi))}^{\frac{q-1}{2}}}.
\end{equation}
Since $z_1^{-1}(\xi) < z_2^{-1}(\xi)$ in $(0, \xi_0)$, we deduce that
\[
({Z_1}^2)_\xi = 2 Z_1 {Z_1}_\xi < 2 Z_2 {Z_2}_\xi = ({Z_2}^2)_\xi.
\]
This contradicts \eqref{8.6} and prove the claim.
\smallskip

As a second claim, we now show that $z_1$ and $z_2$ cannot be both increasing in $(0, \sigma_0)$. Assuming that it holds, we may consider their inverses $z_i^{-1} : [0, \xi_0] \to [0, \sigma_0]$. For $i \in \{1, 2\}$, let $Y_i : [0, \xi_0] \to \RR$ be the function defined as
\[
Y_i(\xi)  = {z_i}_\theta(z_i^{-1}(\xi)) (\sin{z_i^{-1}(\xi)}).
\]
Observe that $Y_i$ can be continuously extended to $0$ by taking $Y_i(0) = 0$. 
Since $z_2 < z_1$ in $(0, \sigma_0)$, we have
\begin{equation}\label{8.6a}
({Y_1}(0))^2 = ({Y_2}(0))^2=0 \quad \text{and} \quad ({Y_2}(\xi_0))^2 > ({Y_1}(\xi_0))^2.
\end{equation}
%
On the other hand, for $i \in \{1, 2\}$,
\[
\begin{split}
{Y_i}_\xi 
& = \Big( {z_i}_{\theta\theta} (z_i^{-1}(\xi)) (\sin{z_i^{-1}(\xi)}) + {z_i}_\theta(z_i^{-1}(\xi)) (\cos{z_i^{-1}(\xi)}) \Big) \frac{1}{{z_i}_\theta(z_i^{-1}(\xi))}\\
& = \Big( {z_i}_{\theta\theta} (z_i^{-1}(\xi)) (\sin{z_i^{-1}(\xi)})^2 \Big) \frac{1}{Y_i}  + \cos{z_i^{-1}(\xi)}.
\end{split}
\]
Thus,
\[
Y_i {Y_i}_\xi - Y_i \cos{z_i^{-1}(\xi)} = - \Big((\ell+\tfrac{1}{4})(\sin{z_i^{-1}(\xi)})^2 + \tfrac{1}{4} \Big) \xi - (\sin{z_i^{-1}(\xi))}^{\frac{5-q}{2}} \xi^q.
\]
Since $z_2^{-1}(\xi) > z_1^{-1}(\xi) $ in $(0, \xi_0)$ and $\ell \ge -\frac{1}{4}$, 
\[
(\ell+\tfrac{1}{4})(\sin{z_2^{-1}(\xi)})^2 \ge (\ell+\tfrac{1}{4})(\sin{z_1^{-1}(\xi)})^2.
\]
Since $q \le 5$,
\[
(\sin{z_2^{-1}(\xi))}^{\frac{5-q}{2}} \ge (\sin{z_1^{-1}(\xi))}^{\frac{5-q}{2}}.
\]
We deduce that
\[
Y_2 {Y_2}_\xi - Y_2 \cos{z_2^{-1}(\xi)} \le Y_1 {Y_1}_\xi - Y_1 \cos{z_1^{-1}(\xi)}.
\]
Hence,
\[
\begin{split}
\big( (Y_2)^2 - (Y_2)^2  \big)_\xi
& \le 2 (Y_2 \cos{z_2^{-1}(\xi)} - Y_1 \cos{z_1^{-1}(\xi)})\\
& \le 2\cos{z_1^{-1}(\xi)} (Y_1 - Y_2)\\
& \le \frac{2\cos{z_1^{-1}(\xi)}}{Y_1 + Y_2} \big((Y_1)^2 - (Y_2)^2 \big).
\end{split}
\]
Let $f: (0, \xi_0) \to \RR$ be the function defined by
\[
f(\xi) = \frac{2\cos{z_1^{-1}(\xi)}}{Y_1(\xi) + Y_2(\xi)}.
\]
Using this notation,
\[
\big( (Y_2)^2 - (Y_1)^2  \big)_\xi \le f(\xi) \, \big((Y_2)^2 - (Y_1)^2 \big).
\]
Thus, for every $\xi \in [0, \xi_0]$,
\[
\big( (Y_2)^2 - (Y_1)^2  \big)(\xi) \ge  \big( (Y_2)^2 - (Y_1)^2  \big)(\xi_0) \, \e^{\int_\xi^{\xi_0} f(\tau) \, d\tau}.
\]
This clearly contradicts \eqref{8.6a} and the second claim is proved.

\medskip
We can now conclude the proof. It follows from equation \eqref{8.5a} that both $z_1$ and $z_2$ are concave. Since $z_{1}$ and $z_{2}$ cannot be simultaneously increasing on $(0,\sigma_{0})$ or decreasing on $(\sigma_{0},\frac{\pi}{2})$, at their intersection point there holds
\[
{z_1}_\theta(\sigma_{0}) <0< {z_2}_\theta(\sigma_{0}).
\]
Therefore, the maximum of $z_1$ is achieved in $(0, \sigma_{0})$ while the maximum of $z_2$ is achieved in $(\sigma_{0}, \frac{\pi}{2})$.

\medskip
Denote the maximum of $z_i$ by $m_i$. We first show that $m_2 > m_1$. Indeed, assume by contradiction that $m_2 \le m_1$. Let $\tilde\sigma_2 \in (\sigma_{0}, \frac{\pi}{2})$ be such that 
\[
z_2(\tilde\sigma_2) = m_2.
\]
Let $\tilde\sigma_1$ be the largest number in $(0, \frac{\pi}{2})$ such that 
\[
z_1(\tilde\sigma_1) = m_2.
\]
The restrictions $z_i : [\tilde\sigma_i, \frac{\pi}{2}] \to [0, m_2]$ are both decreasing.
Let $\tilde Z_i : [0, m_2] \to \RR$ be the function defined as
\[
\tilde Z_i(\xi) = {z_i}_\theta(z_i^{-1}(\xi)).
\]
In the interval $[0,m_{2}]$ we have $z_1^{-1}(\xi) < z_2^{-1}(\xi)$ and (\ref{H}), thus, as in the first claim,
\[
({\tilde Z_1}^2)_\xi < ({\tilde Z_2}^2)_\xi.
\]
Since
\[
({\tilde Z_1}(0))^2 < ({\tilde Z_2}(0))^2 \quad \text{and} \quad ({\tilde Z_1}(m_2))^2 \geq 0 = ({\tilde Z_2}(m_2))^2 ,
\]
we have a contradiction.

\medskip
We now show that $m_1\geq m_2$. Assume by contradiction that $m_1 < m_2$. Let $\hat\sigma_1 \in (0, \sigma)$ be such that 
\[
z_1(\hat\sigma_1) = m_1.
\]
Let $\hat\sigma_2$ be the smallest number in $(0, \frac{\pi}{2})$ such that 
\[
z_2(\hat\sigma_1) = m_1.
\]
The restrictions $z_i : [0, \hat\sigma_i] \to [0, m_1]$ are both increasing.
Let $\hat Y_i : [0, m_1] \to \RR$ be the function defined as
\[
\hat Y_i(\xi) = {z_i}_\theta(z_i^{-1}(\xi)) (\sin{z_i^{-1}(\xi)})
\]
if $\xi \ne 0$ and $\hat Y_i(0) = 0$. Then, $\hat Y_i$ is continuous. In the interval $[0, \hat\sigma_i]$ we have $z_1^{-1}(\xi) < z_2^{-1}(\xi)$, thus, as in the second claim, 
\[
\big( (\hat Y_2)^2 - (\hat Y_1)^2  \big)_\xi \le \frac{2\cos{z_1^{-1}(\xi)}}{\hat Y_2 + \hat Y_1} \big((\hat Y_2)^2 - (\hat Y_1)^2 \big).
\]
This contradicts
\[
\big( (\hat Y_2)^2 - (\hat Y_1)^2 \big)(0) = 0 \quad \text{and} \quad \big( (\hat Y_2)^2 - (\hat Y_1)^2 \big)(m_1) > 0.
\]

\smallskip
Finally $m_2 > m_1\geq m_{1}>0$, which is a contradiction. Therefore, problem \eqref{8.1} can have at most one positive solution.
\end{proof}


\section{Proof of Theorem~\ref{thm1.0}}\label{sec9}

\begin{proof}[Proof of $(i)$]
Assume that $1 < q \le q_1$. Let $\phi$ be a positive eigenfunction of $-\Delta'$ in $W^{1, 2}_0(S^{N-1}_+)$ associated to the first eigenvalue $N-1$, and let $\omega$ be a solution of \eqref{En3}. Using $\phi$ as test function, we get 
\[
\int\limits_{S_+^{N-1}} \langle \nabla'\omega, \nabla'\phi \rangle \, d\sigma =  \int\limits_{S_+^{N-1}} (\ell_{N,q} \omega + \omega^q) \phi \, d\sigma.
\]
On the other hand, since $\phi$ is an eigenfunction of $-\Delta'$,
\[
\int\limits_{S_+^{N-1}} \langle \nabla'\omega, \nabla'\phi \rangle \, d\sigma =  (N-1) \int\limits_{S_+^{N-1}} \omega \phi \, d\sigma.
\]
Thus,
\begin{equation}\label{9.1}
(N-1 -\ell_{N, q})\int\limits_{S^{N-1}_+} \omega\gf \, d\gs =\int\limits_{S^{N-1}_{+}} \omega^q \gf\,d\gs.
\end{equation}
Since $q \le q_1$, we have
\[
N-1 - \ell_{N, q} = \tfrac{(N-1)(q+1)}{(q-1)^2} \big( q - \tfrac{N+1}{N-1} \big) \le 0.
\]
Hence, the left-hand side of \eqref{9.1} is nonpositive while the right-hand side is nonnegative. Thus,
\[
\int\limits_{S^{N-1}_{+}} \omega^q \gf\,d\gs = 0
\]
We conclude that $\omega = 0$ in $S^{N-1}_+$. Hence, problem \eqref{En3} has no positive solution.
\end{proof}

\begin{proof}[Proof of $(ii)$]
Since $q > q_1$,
\[
N-1 - \ell_{N, q} = \tfrac{(N-1)(q+1)}{(q-1)^2} \big( q - \tfrac{N+1}{N-1} \big) > 0.
\]
Thus, the functional $\CJ : W_0^{1,2}(S^{N-1}_+) \to \RR$ defined by
$$
\CJ(w)=\int\limits_{S^{N-1}_{+}} \big(\abs{\nabla' w}^2-\ell_{N,q} w^2 \big) \, d\gs
$$
is bounded from below by $0$. On the other hand, since $q < q_3$ we can minimize $\CJ$ over the set
$$
\bigg\{w\in W^{1,2}_{0}(S^{N-1}_{+}) \ ; \ \int\limits_{S^{N-1}_{+}} (w^+)^{q+1} \, d\gs = 1\bigg\}.
$$
Let $w$ be a minimizer. Then, $w^+$ is also a minimizer, whence $w= w^+$ and this function satisfies
\[
- \Delta'w - \ell_{N, q} w = \lambda w^q \quad \text{in $S^{N-1}_+$}
\]
for some $\lambda > 0$. By standard elliptic regularity theory, $w$ is smooth and vanishes on $\partial S^{N-1}_+$ in the classical sense. The function $\lambda^\frac{1}{q-1} w$ is therefore a solution of \eqref{En3}. For uniqueness, one applies Theorem \ref{thm8.1} and Theorem \ref{thm8.2} in the case $N\neq 3$. If $N=3$ we can applies Theorem \ref{thm8.3} since $\ell_{q,3}=\frac{2(3-q)}{(q-1)^2}$ always satisfies the assumption therein.
\end{proof}

\begin{proof}[Proof of $(iii)$]
We may assume that $N \ge 4$, for otherwise there is nothing to prove. Note that if $q \ge q_3$,
\[
\tfrac{N-1}{q-1} - \ell_{N,q} = - \tfrac{N-3}{(q-1)^2} \big( q - \tfrac{N+1}{N-3} \big) \le 0.
\]
Applying Corollary~\ref{cor7.1}, we deduce that \eqref{En3} has no positive solution.
\end{proof}


\section{The a priori estimate}\label{sec3}

In this section we establish Theorem~\ref{thm1.1} whose proof is based on the following result.

\begin{prop}\label{apr2}
Assume that $1<q<q_{2}$. Let $0 < r < \frac{1}{2}\diam{\Omega}$ and $\zeta \in C^\infty(\partial\Omega)$ with $\zeta \ge 0$ on $\partial\Omega$. Then, every solution of 
\begin{equation}\label{apr3}
\left\{
\begin{alignedat}{2}
-\Gd u 	&= u^q 	&& \quad\text{in }\Gw\cap (B_{2r}\setminus \overline B_{r}),\\
u 	& \ge 0 	&& \quad\text{in }\Gw\cap (B_{2r}\setminus \overline B_{r}),\\
u 	& = \gz 	&& \quad\text{on }\prt\Gw\cap (B_{2r}\setminus \overline B_{r}),
\end{alignedat}
\right.
\end{equation}
satisfies
\begin{equation}\label{apr4}
u(x)\leq C \big[ \dist(x, \Gamma_r)  \big]^{- \frac{2}{q-1}}
\quad\forall x \in \Gw\cap (B_{2r}\setminus \overline B_{r}),
\end{equation}
where $\Gamma_r = \overline\Omega \cap (\partial B_{2r} \cup \partial B_{r})$ and $C>0$ is a constant independent of $u$.
\end{prop}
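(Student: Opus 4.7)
The plan is a blow-up and rescaling argument using the Doubling lemma of Pol\'a\v{c}ik--Quittner--Souplet to locate a good blow-up point, followed by the Liouville theorems of Gidas--Spruck \cite{GidSpr:81} for $\RR^N$ and for the half-space in the subcritical range $1<q<q_2$.

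First, I would argue by contradiction. If \eqref{apr4} fails, then there exist solutions $u_k$ of \eqref{apr3} and points $x_k\in\Gw\cap(B_{2r}\setminus\overline{B_r})$ with
\[
M_k(x_k)\,\dist(x_k,\Gamma_r)\to\infty,\qquad M_k:=u_k^{(q-1)/2}.
\]
Applying the Doubling lemma to $M_k$ on the open set $\Gw\cap(B_{2r}\setminus\overline{B_r})$ produces points $y_k$ (close to $x_k$ in the appropriate sense) such that $M_k(y_k)\ge M_k(x_k)$, $M_k(y_k)\,\dist(y_k,\Gamma_r)\to\infty$, and
\[
M_k(y)\le 2M_k(y_k)\quad\text{whenever}\quad \dist(y,y_k)\le k/M_k(y_k).
\]

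Next, set $\lambda_k=1/M_k(y_k)\to 0$ and rescale
\[
v_k(z)=\lambda_k^{2/(q-1)}\,u_k(y_k+\lambda_k z).
\]
Then $v_k$ is a nonnegative classical solution of $-\Gd v_k=v_k^{q}$ on $\Gw_k\cap B_k$, where $\Gw_k:=\lambda_k^{-1}(\Gw-y_k)$, and satisfies $v_k(0)=1$ and $v_k\le 2^{2/(q-1)}$. Since $\lambda_k\to0$ and $\dist(y_k,\Gamma_r)/\lambda_k\to\infty$, the images of $\partial B_r$ and $\partial B_{2r}$ under the rescaling drift to infinity, and only the rescaled $\partial\Gw$ can persist in the limit. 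Set $d_k=\dist(y_k,\partial\Gw)/\lambda_k$. If a subsequence has $d_k\to\infty$, then $\Gw_k\cap B_k$ exhausts $\RR^N$; standard interior elliptic estimates give a $C^2_{\mathrm{loc}}(\RR^N)$-limit $v$, a bounded positive entire solution of $-\Gd v=v^{q}$ with $v(0)=1$, contradicting the Gidas--Spruck Liouville theorem for $1<q<q_2$. If instead $d_k$ remains bounded, extract $d_k\to d\ge 0$; by the smoothness of $\partial\Gw$ the rescaled boundary $\partial\Gw_k\cap B_k$ locally flattens to an affine hyperplane $H$ at distance $d$ from the origin, and the Dirichlet data $\lambda_k^{2/(q-1)}\zeta(y_k+\lambda_k\,\cdot)$ converge uniformly to $0$ on compact sets because $\zeta$ is bounded. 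Boundary $C^{2,\alpha}$ estimates then give a $C^2_{\mathrm{loc}}$-limit $v$ which is a bounded positive solution of $-\Gd v=v^{q}$ on the half-space $\{z\cdot e<d\}$, vanishing on $H$ and with $v(0)=1$. This contradicts the half-space Liouville theorem in the subcritical range $1<q<q_2$, and the proposition follows.

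The main technical obstacle is the boundary case: passing to a classical limit up to the rescaled boundary requires uniform $C^{2,\alpha}$ estimates in a neighborhood of $y_k$ that behave well under rescaling. This is available because $\partial\Gw$ is smooth (hence has locally uniformly controlled principal curvatures), because $\zeta$ is bounded so that the rescaled Dirichlet data vanish at rate $\lambda_k^{2/(q-1)}$, and because $v_k$ is uniformly bounded by $2^{2/(q-1)}$ so Schauder theory applies uniformly after straightening the boundary in a ball of radius proportional to $1/\lambda_k$ around $y_k$.
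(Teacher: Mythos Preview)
Your proposal is correct and follows essentially the same route as the paper: contradiction, Doubling lemma applied to $M=u^{(q-1)/2}$, blow-up/rescaling, dichotomy into interior and boundary limits, and a Liouville theorem on $\RR^N$ or $\RR^N_+$ to reach a contradiction. Two small points worth noting: (i) the paper lets $r_k$ vary in the contradiction (so that $C$ is uniform in $r$ as well as in $u$), which is what is actually used in the derivation of Theorem~\ref{thm1.1}; your setup fixes $r$, but the modification is trivial, and the bound $\dist(y_k,\Gamma_{r_k})\le \tfrac12 r_k<\tfrac14\diam\Omega$ is then what forces $\lambda_k\to 0$; (ii) the half-space Liouville theorem in the subcritical range is cited from Dancer~\cite{Dan:92} rather than \cite{GidSpr:81}.
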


We denote by $B_r$ the ball of radius $r$ centered at $0$. The proof of this estimate is based on two results: a Liouville theorem for the equation $-\Delta u = u^q$ in $\RR^N$ or in $\RR^N_+$ (see \cite{Dan:92}) and the Doubling lemma of Pol\'a\v cik-Quittner-Souplet~\cite{PolQuiSou:07} which we recall:

\begin{lemma}\label{doubl} 
Let $(X,d)$ be a complete metric space, $\Gamma \varsubsetneqq X$ and $\gamma: X \setminus \Gamma \to (0, +\infty)$. Assume that $\gamma$ is bounded on all compact subsets of $X \setminus \Gamma$. Given $k>0$, let $y \in X\setminus \Gamma$ be such that 
\begin{equation*}
\gamma(y) \, \dist(y,\Gamma)>2k.
\end{equation*}
Then, there exists $x \in X \setminus \Gamma$ such that 
\begin{itemize}
\item $\gamma(x) \, \dist(x,\Gamma)>2k$;
\item $\gamma(x)\geq \gamma(y)$;
\item $2\gamma(x)\geq \gamma(z)$, $\forall z\in B_{k/\gamma(x)}(x)$.
\end{itemize}
\end{lemma}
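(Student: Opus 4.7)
My plan is to prove the lemma by an iterative/greedy construction combined with a completeness argument. Set $y_0 := y$ and attempt to build inductively a sequence $(y_n) \subset X \setminus \Gamma$ that either terminates at a point $x$ satisfying all three conclusions, or else produces a contradiction with the assumption that $\gamma$ is bounded on compact subsets of $X \setminus \Gamma$.

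The inductive step works as follows. Given $y_n \in X \setminus \Gamma$ with $\gamma(y_n) \dist(y_n,\Gamma) > 2k$ and $\gamma(y_n) \geq \gamma(y)$, either $2\gamma(y_n) \geq \gamma(z)$ for all $z \in B_{k/\gamma(y_n)}(y_n)$, in which case $x := y_n$ satisfies the three conclusions and we stop; or else there exists $y_{n+1} \in B_{k/\gamma(y_n)}(y_n)$ with $\gamma(y_{n+1}) > 2\gamma(y_n)$. In the latter case I need to verify that the inductive hypothesis persists at step $n+1$. Clearly $\gamma(y_{n+1}) > 2\gamma(y_n) \geq \gamma(y)$. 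The distance estimate is the key step: since $d(y_{n+1},y_n) \leq k/\gamma(y_n) < \dist(y_n,\Gamma)/2$, in particular $y_{n+1} \notin \Gamma$ and
\[
\dist(y_{n+1},\Gamma) \geq \dist(y_n,\Gamma) - k/\gamma(y_n) > 2k/\gamma(y_n) - k/\gamma(y_n) = k/\gamma(y_n),
\]
so that $\gamma(y_{n+1}) \dist(y_{n+1},\Gamma) > 2\gamma(y_n) \cdot k/\gamma(y_n) = 2k$, as required.

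Now I argue that the construction must terminate. Otherwise, one has $\gamma(y_n) \geq 2^n \gamma(y)$ for every $n \geq 0$, and for every $n,m \geq 0$,
\[
d(y_{n+m},y_n) \leq \sum_{j=n}^{n+m-1} \frac{k}{\gamma(y_j)} \leq \frac{k}{\gamma(y)} \sum_{j=n}^{\infty} 2^{-j} = \frac{2k}{2^n \gamma(y)},
\]
so $(y_n)$ is Cauchy and by completeness converges to some $y_\infty \in X$. Moreover $d(y_\infty,y) \leq 2k/\gamma(y) < \dist(y,\Gamma)$, which forces $y_\infty \in X\setminus\Gamma$ with $\dist(y_\infty,\Gamma) > 0$. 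Hence $K := \{y_n : n \geq 0\} \cup \{y_\infty\}$ is a compact subset of $X \setminus \Gamma$. But $\gamma(y_n) \geq 2^n \gamma(y) \to \infty$, contradicting the assumed boundedness of $\gamma$ on $K$.

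I do not foresee a serious obstacle; the only thing to get right is the bookkeeping that keeps $\gamma(y_n)\dist(y_n,\Gamma) > 2k$ preserved along the iteration, which the elementary calculation above handles. The use of completeness to extract a limit and the hypothesis that $\gamma$ is bounded on compacta play dual roles: together they force the greedy doubling procedure to stop in finitely many steps, and the terminal point is the desired $x$.
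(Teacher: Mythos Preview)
Your proof is correct and is essentially the standard argument for the Doubling lemma. Note, however, that the paper does not give its own proof of this statement: it merely quotes the lemma from Pol\'a\v cik--Quittner--Souplet~\cite{PolQuiSou:07} and uses it as a black box in the proof of Proposition~\ref{apr2}. Your iterative doubling construction, together with the Cauchy estimate and the compactness contradiction, is precisely the argument found in that reference, so there is nothing to compare.
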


\smallskip
\begin{proof}[Proof of Proposition~\ref{apr2}]
To simplify the notation we may assume that $\zeta \equiv 0$.
Assume by contradiction that \eqref{apr4} is false. Then, for every integer $k \geq 1$ there exist $0 < r_k < \frac12\diam{\Omega}$, a solution $u_{k}$ of \eqref{apr3} with $r = r_k$, and $y_k \in \Gw\cap (B_{2r_k}\setminus \overline B_{r_k})$ such that 
$$
u_k(y_k) > (2k)^{\frac{2}{q-1}} \big[ \dist(y_k, \Gamma_{r_k})  \big]^{- \frac{2}{q-1}}.
$$
Applying the previous lemma with 
\[
X = \overline{\Omega} \cap (\overline{B}_{2r_k} \setminus B_{r_k}) \quad \text{and} \quad \gamma = u_k^{\frac{q-1}{2}},
\]
one finds $x_k \in X \setminus \Gamma_{r_k}$ such that
\begin{itemize}
\item[$(i)$] $u_k(x_k) > (2k)^{\frac{2}{q-1}} \big[ \dist(x_k, \Gamma_{r_k})  \big]^{- \frac{2}{q-1}}$;
\item[$(ii)$] $u_k(x_k) \geq u_k(y_k)$;
\item[$(iii)$] $2^{\frac{2}{q-1}} u_k(x_k) \geq u_k(z)$, $\forall z\in B_{R_k}(x_k) \cap \Omega$, with $R_k = k[u_k(x_k)]^{-\frac{q-1}{2}}$.
\end{itemize}

By $(i)$ we have $R_k < \frac{1}{2} \dist(x_k, \Gamma_{r_k})$ and thus
\begin{equation*}
B_{R_k}(x_k) \cap \Gamma_{r_k} = \emptyset.
\end{equation*}
Since $\dist(x_k, \Gamma_{r_k}) \le \frac{1}{2}r_k < \frac{1}{4} \diam{\Omega}$, we also deduce from $(i)$ that
\[
u_k(x_k) \ge \left( \frac{8k}{\diam{\Omega}} \right)^\frac{2}{q-1}.
\]
In particular,
$$
u_k(x_k) \to + \infty \quad \text{as } k \to +\infty.
$$
For every $k \ge 1$, let
\begin{align*}
t_k & = [u_k(x_k)]^{-\frac{q-1}{2}},\\
D_k & = \Big\{ \xi \in \RR^N;\ |\xi| \leq k \text{\; and \;} x_k + t_k \xi \in \Omega \Big\}
\intertext{and}
v_k(\xi) & = \frac{1}{u_k(x_k)} \, u_k \big( x_k + t_k \xi  \big) \quad  \forall \xi \in D_k.
\end{align*}
Then, $v_k$ satisfies
\begin{equation*}
-\Gd v_k = v_k^q,  \quad 0 \leq v_k  \leq 2^{\frac{2}{q-1}} \quad \text{and} \quad v_k(0) = 1.
\end{equation*}

Passing to a subsequence if necessary, we may assume that either
\begin{itemize}
\item[$(A)$] for every $a > 0$ there exists $k_0 \ge 1$ such that if $k \ge k_0$, then
$B_{at_k}(x_k) \cap \partial\Omega = \emptyset$,
\end{itemize}
or
\begin{itemize}
\item[$(B)$] there exists $a_0 > 0$ such that for every $k \ge 1$,
$B_{a_0 t_k}(x_k) \cap \partial\Omega \neq \emptyset$.
\end{itemize}
Since the sequence $(v_k)$ is uniformly bounded, it follows that $(\Delta v_k)$ is also uniformly bounded. In both cases, by elliptic (interior and boundary) estimates, we have for every $1 < p < +\infty$ and every $s > 0$,
\[
\|v_k\|_{W^{2,p}(D_k \cap B_s)} \le C_{s, p}.
\]
If $(A)$ holds, then up to a subsequence $(v_k)$ converges locally uniformly in $\RR^N$ to some smooth function $v$ such that
\begin{equation*}
-\Gd v = v^q,  \quad 0 \leq v  \leq 2^{\frac{2}{q-1}} \quad \text{and} \quad v(0) = 1.
\end{equation*}
On the other hand, if $(B)$ holds, then up to a subsequence and a rotation of the domain there exists some smooth function $v$ defined in $\RR^N_+$  such that $(v_k)$ converges locally uniformly to $v$. Since the sequence $(v_k)$ is equicontinuous and for every $k \ge 1$, $v_k(0) = 0$, we have $v(0) = 1$.

In both cases, we deduce that $v$ is a nontrivial bounded solution of 
\[
-\Gd v = v^q
\]
in $\RR^N$ or in $\RR^N_+$, which is impossible (see \cite{Dan:92}). Therefore, estimate \eqref{apr4} must hold.
\end{proof}

\begin{proof}[Proof of Theorem~\ref{thm1.1}.] 
It suffices to establish \eqref{E5} if $x \in \Omega$ and $|x| < \frac34 \diam{\Omega}$. For this purpose, we apply Proposition~\ref{apr2} with $r = \frac23 |x|$.  Since $\dist(x, \Gamma_r) = \frac{1}{3} r$, we deduce that 
\begin{equation*}
u(x)\leq C \big[ \dist(x, \Gamma_r)  \big]^{- \frac{2}{q-1}} =  C \left(\frac{r}{3} \right)^{- \frac{2}{q-1}} = \widetilde C \, |x|^{- \frac{2}{q-1}}.
\end{equation*}
This establishes the result.
\end{proof}


\section{The geometric and analytic framework}\label{sec5}

We recall some of the preliminaries and the geometric framework in \cite{GmiVer:91} which will be used in the remaining of the paper.

\medskip  
We denote by $(x_{1}, \ldots, x_{N})$ the coordinates of $x \in \RR^N$ and by $\CB=\{\bs \e_{1},\ldots, \bs \e_{N}\}$ the canonical orthonormal basis in  $\BBR^N$. 
Since we are assuming that the outward unit normal vector is $-\bs \e_{N}$, 
 $\partial\Omega$ is the graph of a smooth function in a neighborhood of $0$. In other words,
there exist a neighborhood $G$ of $0$ and a smooth function $\gf : G\cap T_{0}\Gw \to \RR$ such that 
$$
G\cap\prt\Gw= \Big\{(x',x_{N}) \in \RR^{N-1} \times \RR \ ; \ x'\in G\cap T_{0}\Gw \text{ and } x_{N}=\gf(x') \Big\}.
$$
Furthermore, 
\[
\gf(0)= 0 \quad \text{and} \quad \nabla\gf(0)=0.
\]
Setting $\Gf (x)=y$, with $y_{i}=x_{i}$ if $i=1,\ldots, N-1$ and $y_{N}=x_{N}-\gf(x')$, we can assume that $\Gf$ is a $C^\infty$ diffeomorphism from $G$ to $\tilde G=\Gf(G)$, and $\Gf(\Gw\cap G)=\tilde G\cap\BBR_{+}^N$. 
To avoid introducing some additional notation, we will assume that 
\[
\tilde G = B_1.
\]

\medskip
Given $\zeta \in C^\infty(\partial\Omega)$, let $z$ be the harmonic extension of $\gz$ in $\Gw$. For every solution $u$ of \eqref{E1}, we denote
\begin{equation*}
u(x)-z(x)=\tilde u(y),\quad z(x)=\tilde z(y) \quad \text{and} \quad \gz(x)=\tilde\gz(y),
\end{equation*}
for every $x=\Gf^{-1}(y)$ with $y\in \tilde G\cap\BBR_{+}^N$. 
Since $u$ is superharmonic and $u = z$ on $\partial\Omega$, we have $\tilde u\geq 0$. 
On the other hand, a straightforward computation yields
$$
\Delta u = \Gd\tilde u + \abs{\nabla\gf}^2\tilde u_{y_{N},y_{N}} - 2\langle\nabla\gf,\nabla\tilde u_{y_{N}}\rangle - \tilde u_{y_{N}}\Gd\gf
$$
Thus, $\tilde u$ satisfies the equation 
\begin{equation*}
-\Gd\tilde u-\abs{\nabla\gf}^2\tilde u_{y_{N},y_{N}}+2\langle\nabla\gf,\nabla\tilde u_{y_{N}}\rangle+\tilde u_{y_{N}}\Gd\gf
 = (\tilde u+\tilde z)^q.
\end{equation*}
Rewriting this equation in terms of spherical coordinates, one obtains
\begin{multline*}
\left( 1  + \eta_1 \right) \tilde u_{rr} + \frac{1}{r^2} \Gd'\tilde u + \frac{N-1 + \eta_2}{r} \, \tilde u_{r} + (\tilde u+\tilde z)^q = \\
= \frac{1}{r^2} \langle\nabla' \tilde u,  \overrightarrow{\eta_3} \rangle + \frac{1}{r} \langle\nabla' \tilde u_r, \overrightarrow{\eta_4} \rangle 
+ \frac{1}{r^2} \langle\nabla'\langle\nabla'\tilde u, {\bf e}_N\rangle, \overrightarrow{\eta_5} \rangle.
\end{multline*}
where
\begin{equation*}
\begin{aligned}
\eta_1 & = - 2\gf_r\langle{\bf n}, {\bf e}_N\rangle+\abs{\nabla\gf}^2\langle{\bf n},{\bf e}_N\rangle^2,\\
\eta_2 & = -r\langle{\bf n},{\bf e}_N\rangle\Gd\gf-2
\langle\nabla'\langle{\bf n},{\bf e}_N\rangle,\nabla'\gf\rangle
+r\abs{\nabla\gf}^2\langle\nabla'\langle{\bf n},{\bf e}_N\rangle,{\bf e}_N\rangle,\\
\overrightarrow{\eta_3} & = - \left(2\gf_r-\abs{\nabla \gf}^2\langle{\bf n},{\bf e}_N\rangle-r\Gd\gf\right) {\bf e}_N, \\
\overrightarrow{\eta_4} & = - \left( \abs{\nabla \gf}^2\langle{\bf n},{\bf e}_N\rangle - 2\gf_r\right){\bf e}_N
+ \frac{2}{r}  \langle{\bf n},{\bf e}_N \rangle \nabla'\gf,\\
\overrightarrow{\eta_5} & = - \abs{\nabla\gf}^2 {\bf e}_N + \frac{2}{r} \nabla'\gf.
\end{aligned}
\end{equation*}
Taking into account the fact that $\phi(0) = 0$ and $\nabla\phi(0)= 0$,
$$
\abs{\gf(x)}\leq Cr^2,\quad \abs{D\gf(x)} \leq Cr \quad \text{and} \quad \abs{D^2\gf}\leq C.
$$
Thus, for every $j = 1, \ldots, 5$,
\begin{equation*}
\| \eta_j(r, \cdot) \|_{L^\infty} \leq C r \quad \forall r \in (0, 1).
\end{equation*}

\begin{lemma}\label{lemma5.0}
Let
\begin{equation}\label{4.2}
t= \log{\tfrac{1}{r}}, \quad v(t,\gs) = r^{\frac{2}{q-1}} \, \tilde u(r,\gs)  \quad \text{and} \quad \alpha(t,\gs) = r^{\frac{2}{q-1}} \, \tilde z(r,\gs).
\end{equation}
Then, $v$ satisfies
\begin{multline}\label{lift4}
\left(1+\epsilon_1\right)v_{tt} + \Gd'v - \left(N- \tfrac{2(q+1)}{q-1}+\epsilon_2\right)v_{t}
+\left(\ell_{N,q}+\epsilon_3\right)v+ (v+\ga)^q = \\
=\langle\nabla'v,\overrightarrow {\epsilon_4}\rangle 
+\langle\nabla'v_t,\overrightarrow {\epsilon_5}\rangle+
\langle\nabla'\langle \nabla' v,{\bf e}_N\rangle,\overrightarrow {\epsilon_6}\rangle,
\end{multline}
where $\epsilon_j$ are functions defined in $(0, +\infty) \times S_+^{N-1}$ satisfying the estimates
\begin{equation}\label{4.3}
\| \epsilon_j(t, \cdot) \|_{L^\infty} \leq C \e^{-t} \quad \forall t \geq 0,
\end{equation}
for every $j = 1, \ldots, 6$.
\end{lemma}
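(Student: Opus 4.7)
The plan is to perform the change of variables directly in the spherical-coordinate equation for $\tilde u$ displayed just before the statement and to verify that the algebraic coefficients match those in \eqref{lift4}. Set $\mu = \frac{2}{q-1}$, so that $v = r^{\mu}\tilde u$ and $\alpha = r^{\mu}\tilde z$. Since $t = \log\frac{1}{r}$, we have $\partial_{r} = -\frac{1}{r}\partial_{t}$, and a direct computation yields
\[
\tilde u_{r} = -r^{-\mu-1}(\mu v + v_{t}),\qquad \tilde u_{rr} = r^{-\mu-2}\bigl[v_{tt} + (2\mu+1)v_{t} + \mu(\mu+1)v\bigr],
\]
together with $\nabla'\tilde u = r^{-\mu}\nabla' v$ and $\nabla'\tilde u_{r} = -r^{-\mu-1}(\mu\nabla' v + \nabla' v_{t})$.

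Next I would substitute these identities into the equation for $\tilde u$ in spherical coordinates. The crucial algebraic fact that makes everything collapse is the identity
\[
\mu q \;=\; \mu + 2,
\]
which is equivalent to $\mu = \frac{2}{q-1}$ and allows a common factor $r^{-\mu-2}$ to be pulled out of every term, including the nonlinearity $(\tilde u + \tilde z)^{q} = r^{-\mu q}(v+\alpha)^{q}$. After collecting the terms multiplying $v_{tt}$, $v_{t}$, $v$, and the tangential derivatives, one obtains precisely \eqref{lift4}, with the error functions given explicitly by
\[
\epsilon_{1} = \eta_{1},\qquad \epsilon_{2} = \eta_{2} - (2\mu+1)\eta_{1},\qquad \epsilon_{3} = \mu(\mu+1)\eta_{1} - \mu\eta_{2},
\]
and $\overrightarrow{\epsilon_{4}} = \overrightarrow{\eta_{3}} - \mu\overrightarrow{\eta_{4}}$, $\overrightarrow{\epsilon_{5}} = -\overrightarrow{\eta_{4}}$, $\overrightarrow{\epsilon_{6}} = \overrightarrow{\eta_{5}}$.

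To identify the coefficient of $v_{t}$, one checks $2\mu + 1 - (N-1) = \frac{2(q+1)}{q-1} - N$, which matches the sign convention used in \eqref{lift4}. The coefficient of $v$ requires the one non-trivial piece of algebra
\[
\mu(\mu+1) - (N-1)\mu \;=\; \mu(\mu + 2 - N) \;=\; \tfrac{2}{q-1}\cdot\tfrac{2 + 2(q-1) - N(q-1)}{q-1} \;=\; \tfrac{2(N - q(N-2))}{(q-1)^{2}} \;=\; \ell_{N,q},
\]
which is exactly the constant appearing in \eqref{En3}; this is the computation that dictates, and hence justifies, the choice of normalization $v = r^{2/(q-1)}\tilde u$.

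Finally, the pointwise bound \eqref{4.3} on the $\epsilon_{j}$'s is immediate: each $\epsilon_{j}$ is a fixed linear combination of the $\eta_{k}$'s with coefficients depending only on $N$ and $q$, so the estimate $\|\eta_{k}(r,\cdot)\|_{L^{\infty}} \le Cr$ transfers directly to $\|\epsilon_{j}(t,\cdot)\|_{L^{\infty}} \le Cr = Ce^{-t}$. I do not anticipate a genuine obstacle in this lemma; the entire argument is careful bookkeeping under the logarithmic-scaling change of variable, the only non-trivial point being the identity $\mu(\mu+2-N) = \ell_{N,q}$ that forces the linear coefficient of $v$ to agree with the one arising in the radial ansatz leading to \eqref{En3}.
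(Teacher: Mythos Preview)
Your computation is correct and complete: the change of variables, the identification $\mu q=\mu+2$, the collection of the coefficients of $v_{tt}$, $v_t$, $v$, the explicit formulas for the $\epsilon_j$ in terms of the $\eta_k$, and the verification $\mu(\mu+2-N)=\ell_{N,q}$ are all accurate. The paper itself does not give a proof of this lemma; it simply refers the reader to \cite{GmiVer:91} for the computation and the explicit expressions of the $\epsilon_j$, so your write-up is in fact more self-contained than the paper's treatment while following the only natural route.
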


We refer the reader to \cite{GmiVer:91} for the proof of Lemma~\ref{lemma5.0} and for the explicit expressions of the functions $\epsilon_j$.

\bigskip

For every $T \geq 0$ and $\delta > 0$, let
\begin{equation*}
Q_T = (T, +\infty) \times S_+^{N-1} \quad \text{and} \quad Q_{T,\delta} = (T-\delta, T+\delta) \times S_+^{N-1}.
\end{equation*}

We have the following $W^{2,p}$-estimates satisfied by $v$:

\begin{prop}\label{prop5.1}
Let $v$ be defined as in Lemma~\ref{lemma5.0}. If $v$ is uniformly bounded in $Q_0$, then for every $1 < p < +\infty$,
\begin{equation}\label{5.3}
\|v\|_{W^{2,p}(Q_{T,1})} \leq C \Big( \|v\|_{L^2(Q_{T,2})} + \e^{- \frac{2T}{q-1}} \Big) \quad \forall T \geq 2,
\end{equation}
for some positive constant depending on $\|v\|_{L^\infty}$ and on $p$.
\end{prop}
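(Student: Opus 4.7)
The plan is to view equation \eqref{lift4} as a linear, uniformly elliptic second-order equation $\tilde{\mathcal L}_T v = -(v+\alpha)^q$ for $v$ on the cylinder $Q_{T,2}$, with homogeneous Dirichlet boundary data on the lateral boundary $(T-2,T+2)\times\partial S_+^{N-1}$, and then to apply the classical Calder\'on--Zygmund $W^{2,p}$ estimates combined with a bootstrap in $p$. All terms of \eqref{lift4} involving derivatives of $v$ up to order two---namely $(1+\epsilon_1)v_{tt}$, $\Delta'v$, the first-order derivatives with coefficients $-(N-\tfrac{2(q+1)}{q-1}+\epsilon_2)$, $\overrightarrow{\epsilon_4}$, and $\overrightarrow{\epsilon_5}$, the zeroth-order coefficient $\ell_{N,q}+\epsilon_3$, and the second-order right-hand side $\langle\nabla'\langle\nabla'v,\mathbf{e}_N\rangle,\overrightarrow{\epsilon_6}\rangle$---are collected into $\tilde{\mathcal L}_T$. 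By Lemma~\ref{lemma5.0}, estimate \eqref{4.3} shows that on every $Q_{T,2}$ with $T\ge 2$ the coefficients of $\tilde{\mathcal L}_T$ are uniformly bounded and its principal coefficients are uniformly close to those of $\partial_t^2+\Delta'$, so $\tilde{\mathcal L}_T$ is uniformly elliptic with ellipticity and continuity constants independent of $T\ge 2$.

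For the case $p=2$, I would use the hypothesis $v\in L^\infty(Q_0)$ and the pointwise bound $|\alpha(t,\sigma)|\le C\e^{-2t/(q-1)}\le C'\e^{-2T/(q-1)}$ on $Q_{T,2}$. Since $|(v+\alpha)^q - v^q|\le C|\alpha|$ with $C$ depending on $\|v\|_{L^\infty}$, and since $|v^q|\le\|v\|_{L^\infty}^{q-1}|v|$,
\[
\|(v+\alpha)^q\|_{L^2(Q_{T,2})}\le C\|v\|_{L^2(Q_{T,2})}+C\e^{-\frac{2T}{q-1}}.
\]
The standard interior-and-boundary $W^{2,2}$ Calder\'on--Zygmund estimate applied to $\tilde{\mathcal L}_T v=-(v+\alpha)^q$ on the pair $Q_{T,1}\subset Q_{T,2}$ (valid thanks to the vanishing of $v$ on the smooth lateral boundary and to the uniform ellipticity and coefficient bounds just established) then gives \eqref{5.3} with $p=2$.

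To pass to arbitrary $p\in(1,+\infty)$, I would bootstrap along a finite chain of nested cylinders $Q_{T,2}=Q_{T,\theta_0}\supset Q_{T,\theta_1}\supset\cdots\supset Q_{T,\theta_k}=Q_{T,1}$. At each step, Sobolev embedding $W^{2,p_j}(Q_{T,\theta_j})\hookrightarrow L^{p_{j+1}}(Q_{T,\theta_j})$ (with gain of integrability depending only on $N$) lifts the integrability of $v$ from $p_j$ to $p_{j+1}$; then $\|v^q\|_{L^{p_{j+1}}}\le\|v\|_{L^\infty}^{q-1}\|v\|_{L^{p_{j+1}}}$ and $\|\alpha\|_{L^{p_{j+1}}(Q_{T,2})}\le C\e^{-2T/(q-1)}$ control the right-hand side in $L^{p_{j+1}}$, and a fresh Calder\'on--Zygmund estimate on $Q_{T,\theta_{j+1}}\subset Q_{T,\theta_j}$ upgrades the bound on $v$ to $W^{2,p_{j+1}}$. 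Finitely many iterations reach any prescribed $p$, always with the right-hand side controlled by $C\|v\|_{L^2(Q_{T,2})}+C\e^{-\frac{2T}{q-1}}$.

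The main obstacle will be ensuring that all Calder\'on--Zygmund constants remain \emph{uniform in $T$} throughout the bootstrap. This rests on the uniform $L^\infty$-smallness of the $\epsilon_j$'s in \eqref{4.3}, which confines $\tilde{\mathcal L}_T$ to a fixed, uniformly elliptic and uniformly continuous class of operators on $Q_{T,2}$ for all $T\ge 2$.
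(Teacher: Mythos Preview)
Your proposal is correct and follows essentially the same route as the paper: collect all derivative terms into a uniformly elliptic operator (the paper calls it $L$ and cites Agmon--Douglis--Nirenberg rather than Calder\'on--Zygmund), bound $\|(v+\alpha)^q\|_{L^p}$ via $\|v\|_{L^\infty}^{q-1}\|v\|_{L^p}+C\e^{-2T/(q-1)}$, and then bootstrap through nested cylinders using Sobolev embedding. The paper's treatment of the nonlinearity is marginally different in form (it writes $\|(v+\alpha)^q\|_{L^p}\le\|v+\alpha\|_{L^\infty}^{q-1}\|v+\alpha\|_{L^p}$ and then splits) but the outcome and structure of the argument are the same.
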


\begin{proof}
Since $\Delta'$ is uniformly elliptic and $\Phi$ is a diffeomorphism, the operator $L$ given by
\begin{multline*}
L(v) = \left(1+\epsilon_1\right)v_{tt} + \Gd'v - \left(N- \tfrac{2(q+1)}{q-1}+\epsilon_2\right)v_{t} +
 \\
- \langle\nabla'v,\overrightarrow {\epsilon_4}\rangle 
- \langle\nabla'v_t,\overrightarrow {\epsilon_5}\rangle -
\langle\nabla'\langle \nabla' v,{\bf e}_N\rangle,\overrightarrow {\epsilon_6}\rangle
\end{multline*}
is uniformly elliptic. Let $\delta > 0$. By the Agmon-Douglis-Nirenberg estimates (see \cite{AgmDouNir:59}) applied to the restriction of $v$ on the set $Q_{T, 1 + \delta}$,
\[
\|v\|_{W^{2,p}(Q_{T,1 + \delta})} \leq C \Big( \|v\|_{L^p(Q_{T, 1 + 2\delta})} + \|(\alpha + v)^q\|_{L^p(Q_{T,1 + 2\delta})} \Big).
\]
Since $\alpha$ and $v$ are uniformly bounded in $Q_0$, for every $s \in (1, 2)$ we have
\[
\begin{split}
\|(\alpha + v)^q\|_{L^p(Q_{T,s})} 
& \le  \|\alpha + v\|_{L^\infty(Q_{T,s})}^{q-1} \|\alpha + v\|_{L^p(Q_{T,s})} \\
& \le C \Big( \|\alpha \|_{L^p(Q_{T,s})} + \| v\|_{L^p(Q_{T,s})} \Big).
\end{split}
\]
Since $\tilde z$ is uniformly bounded in $\Omega$,
\[
\|\alpha \|_{L^p(Q_{T,s})} \le C \e^{- \frac{2T}{q-1}}\|\tilde z\|_{L^\infty(\Omega)} \le C \e^{- \frac{2T}{q-1}}.
\]
Thus,
\begin{equation}\label{5.4}
\|v\|_{W^{2,p}(Q_{T,1+\delta})} \leq C \Big( \|v\|_{L^p(Q_{T,1 + 2\delta})} + \e^{- \frac{2T}{q-1}} \Big).
\end{equation}
In particular,
\[
\|v\|_{W^{2,p}(Q_{T,1})} \leq C \Big( \|v\|_{L^p(Q_{T, \frac{3}{2})})} + \e^{- \frac{2T}{q-1}} \Big).
\]
By a bootstrap argument based on the estimate \eqref{5.4} above and the Sobolev imbedding, we also have
\[
\|v\|_{L^{p}(Q_{T, \frac{3}{2}})} \leq C \Big( \|v\|_{L^2(Q_{T,2})} + \e^{- \frac{2T}{q-1}} \Big).
\]
Combining these inequalities, the estimate follows.
\end{proof}


\section{Removable singularities at $0$}\label{sec6}

The goal of this section is to show that solutions of \eqref{E1} which are not too large in a neighborhood of $0$ must be continuous at $0$.

\begin{thm}\label{prop6.1} 
Let $q > q_{1}$ and let $u$ be a solution of \eqref{E1}. If
\begin{equation}\label{6.1}
\lim_{x \to 0}{\abs x^{\frac{2}{q-1}} u(x) } = 0,
\end{equation}
then $u$ can be continuously extended at $0$.
\end{thm}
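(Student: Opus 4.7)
The strategy is to replace $u$ by $\tilde u = u - z$, where $z \in C^\infty(\overline\Omega)$ is the harmonic extension of $\zeta$, and apply the Martin--Riesz decomposition of the positive superharmonic function $\tilde u$. Since $z$ is continuous at $0$, it suffices to prove that $\tilde u$ extends continuously at $0$ (necessarily with value $0$). Note that $\tilde u \ge 0$, $-\Delta \tilde u = u^q \ge 0$ in $\Omega$, and $\tilde u = 0$ on $\partial\Omega \setminus \{0\}$.

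The first step is to check that $u^q$ gives a finite Green potential. By \eqref{6.1}, for every $\eta > 0$ there exists $\delta > 0$ with $u(y) \le \eta \abs y^{-2/(q-1)}$ whenever $\abs y \le \delta$. Combined with the estimate $G_\Omega(x_0, y) \lesssim \dist(y, \partial\Omega) \le y_N$ for $y$ near $0$ and $x_0$ fixed in $\Omega$, this yields
\[
\int_{\Omega \cap B_\delta} G_\Omega(x_0, y)\, u^q(y)\, dy \ \lesssim\ \eta^q \int_0^\delta r^{N - \frac{2q}{q-1}}\, dr,
\]
and the last integral converges precisely because $q > q_1$ is equivalent to $N - \tfrac{2q}{q-1} > -1$. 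By the Riesz decomposition, together with the Martin representation of the harmonic minorant (which vanishes on $\partial\Omega \setminus \{0\}$ and so must be a nonnegative multiple of the Poisson kernel $P_\Omega(\cdot, 0)$), we obtain
\[
\tilde u(x) \ =\ k\, P_\Omega(x, 0) \ +\ \int_\Omega G_\Omega(x, y)\, u^q(y)\, dy, \qquad x \in \Omega,
\]
for some constant $k \ge 0$.

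Next I would show $k = 0$. Locally $P_\Omega(x, 0) \asymp \dist(x, \partial\Omega)/\abs x^N$; taking $x \to 0$ along the inner normal to $\partial\Omega$ at $0$ (so $\dist(x, \partial\Omega) \asymp \abs x$) gives $P_\Omega(x, 0) \gtrsim \abs x^{-(N-1)}$. Since $q > q_1$ is equivalent to $\tfrac{2}{q-1} < N-1$, we have $\abs x^{2/(q-1)} P_\Omega(x, 0) \to \infty$ along this direction. Hypothesis \eqref{6.1} therefore forces $k = 0$, and $\tilde u = G_\Omega[u^q]$.

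Finally, I would show that $G_\Omega[u^q](x) \to 0$ as $x \to 0$. The integral over $\Omega \setminus B_\delta$ tends to $0$ by dominated convergence, using $G_\Omega(0, \cdot) \equiv 0$ on $\Omega$ and the local boundedness of $u^q$ away from $0$. For the integral over $\Omega \cap B_\delta$, the naive bound $u^q(y) \le \eta^q \abs y^{-2q/(q-1)}$ yields only a majorant of order $\abs x^{-2/(q-1)}$ and is useless; the refinement is to invoke Proposition~\ref{prop5.1}, which, combined with the hypothesis $v \to 0$ uniformly on $S_+^{N-1}$ (just a restatement of \eqref{6.1}), upgrades through Sobolev embedding to $v \to 0$ in $C^1$ as $t \to \infty$. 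Together with the Dirichlet condition $v = 0$ on $\partial S_+^{N-1}$ this gives
\[
v(t, \sigma) \ \le\ \varepsilon(t)\, \sigma_N, \quad \text{with } \varepsilon(t) \to 0 \text{ as } t \to \infty,
\]
equivalently $\tilde u(y) \le \varepsilon(\abs y)\, y_N\, \abs y^{-(q+1)/(q-1)}$ with $\varepsilon(\abs y) \to 0$ as $\abs y \to 0$. Inserting this refined pointwise estimate together with the 3G-inequality $G_\Omega(x, y) \lesssim \dist(x, \partial\Omega)\, y_N / \abs{x-y}^N$ and rescaling $y = \abs x z$, one obtains via dominated convergence (the scaled integrand being majorized by an integrable function on $\RR^N$ precisely because $q > q_1$) that the near-integral tends to $0$ as $x \to 0$. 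Hence $\tilde u(x) \to 0$, so $u(x) \to z(0) = \zeta(0)$.

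The main obstacle is the last step: because the hypothesis gives a decay rate that is \emph{critical} for the Green potential, one must exploit both the boundary vanishing of $\tilde u$ on $\partial\Omega\setminus\{0\}$ (the extra factor $y_N$ produced by the $C^1$-decay of $v$ from Proposition~\ref{prop5.1}) and the sharp 3G-estimate in order to extract any smallness. An alternative route is to bypass the Green potential entirely and run an asymptotic ODE analysis of \eqref{lift4} mode by mode on the spherical eigenspaces of $-\Delta'$: the characteristic roots in the first eigenspace can be computed in closed form and turn out, under $q > q_1$, to produce a decay rate for $v$ strictly faster than $e^{-2t/(q-1)}$, whence $\tilde u$ is bounded near $0$ and elliptic regularity yields continuity.
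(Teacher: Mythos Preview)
Your primary approach via the Riesz--Martin decomposition is sound through the identification $\tilde u = G_\Omega[u^q]$ and the argument that $k=0$, but the last step---showing that the near-integral tends to $0$---does not close. After the rescaling $y = |x|z$, the contribution from $\Omega\cap B_\delta$ becomes
\[
|x|^{-\frac{2}{q-1}} \int_{\RR^N_+} G_{\RR^N_+}\Big(\tfrac{x}{|x|}, z\Big)\, z_N^{\,q}\, |z|^{-\frac{q(q+1)}{q-1}}\, \varepsilon^q(|x|\,|z|)\, dz .
\]
The integral does go to $0$ by dominated convergence (integrability of the majorant over $\RR^N_+$ is exactly where $q>q_1$ enters), but the prefactor $|x|^{-2/(q-1)}$ blows up; the net conclusion is only $\tilde u(x)=o(|x|^{-2/(q-1)})$, i.e.\ you have merely reproduced the hypothesis \eqref{6.1}, not proved $\tilde u(x)\to 0$. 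The $C^1$-refinement you extract from Proposition~\ref{prop5.1} is correct as a statement, but it does not help here: the extra factor it buys in $\tilde u^q(y)$ is $(y_N/|y|)^q$, which is scale-invariant and therefore only improves the constant in the $o(\cdot)$, never the power of $|x|$. To make a potential-theoretic bootstrap work you would need a \emph{quantitative} initial rate in \eqref{6.1}, which is not assumed.

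What actually works is your ``alternative route,'' and it is essentially the paper's proof. The paper tests \eqref{new1} against $v$, uses that the first Dirichlet eigenvalue of $-\Delta'$ on $S_+^{N-1}$ is $N-1$, and derives a second-order differential inequality for $X(t)=\|v(t,\cdot)\|_{L^2(S_+^{N-1})}$. The associated linear equation has characteristic roots $-\tfrac{q+1}{q-1}$ and $N-\tfrac{q+1}{q-1}$; the condition $q>q_1$ is precisely what makes the latter positive and the former strictly smaller than $-\tfrac{2}{q-1}$. Since $X(t)\to 0$, the maximum principle rules out the growing mode, and a finite bootstrap against the error term $H$ (whose $L^2$-norm is $\lesssim e^{-t}$ times the current $W^{2,2}$-bound on $v$, by \eqref{4.3} and Proposition~\ref{prop5.1}) yields $X(t)\le C e^{-2t/(q-1)}$, hence $u$ is bounded. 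Your mode-by-mode variant would work as well; the paper simply handles all modes at once through the $L^2$-norm and the eigenvalue lower bound.
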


\begin{proof} 
Let $v$ be the function given by \eqref{4.2}. By assumption \eqref{6.1}, we have
\begin{equation}\label{6.2}
\lim_{t \to +\infty}{v(t,\cdot)} = 0 \quad \text{uniformly in } S_+^{N-1}.
\end{equation}
We now rewrite \eqref{lift4} under the form
\begin{equation}\label{new1}
v_{tt} - \left(N-\tfrac{2(q+1)}{q-1}\right)v_{t}
+\ell_{N,q}v+\Gd'v+(v+\ga)^q= H,
\end{equation}
where $H$ is given by
\begin{equation}\label{new2}
H= -\epsilon_{1}v_{tt} + \epsilon_{2}v_{t} - \epsilon_{3}v + \langle\nabla'v,\overrightarrow {\epsilon_4}\rangle
+\langle\nabla'v_t,\overrightarrow {\epsilon_5}\rangle+
\langle\nabla'\langle \nabla' v,{\bf e}_N\rangle,\overrightarrow {\epsilon_6}\rangle.
\end{equation}
Thus,
\begin{multline}\label{6.3}
\int\limits_{S^{N-1}_{+}} v v_{tt} \, d\sigma - \left(N-\tfrac{2(q+1)}{q-1}\right) \int\limits_{S^{N-1}_{+}} v v_{t} \, d\sigma  +\ell_{N,q} \int\limits_{S^{N-1}_{+}} v^2  \, d\sigma + \int\limits_{S^{N-1}_{+}} v\Gd'v \, d\sigma + \\
+ \int\limits_{S^{N-1}_{+}} v(v+\ga)^q \, d\sigma = \int\limits_{S^{N-1}_{+}} v H d\sigma.
\end{multline}
Let 
$$
X(t)= \|v(t, \cdot)\|_{L^2(S^{N-1}_{+})} \quad \forall t \geq 0.
$$
Note that for every $t > 0$,
\begin{equation}\label{6.5}
X X_t = \int\limits_{S^{N-1}_{+}} vv_{t} \,d\sigma.
\end{equation}
Using H\"older's inequality we have
\[
|X X_t| \le \|v(t, \cdot)\|_{L^2(S^{N-1}_{+})}\|v_t(t, \cdot)\|_{L^2(S^{N-1}_{+})}.
\]
Thus,
\begin{equation}\label{6.5a}
|X_t| \le \|v_t(t, \cdot)\|_{L^2(S^{N-1}_{+})}.
\end{equation}
Computing the derivative with respect to $t$ on both sides of identity \eqref{6.5}, we get
\[
(X_t)^2 + X X_{tt} = \int\limits_{S^{N-1}_{+}} (v_{t})^2 \,d\sigma + \int\limits_{S^{N-1}_{+}} v v_{tt} \,d\sigma = \|v_t(t, \cdot)\|_{L^2(S^{N-1}_{+})}^2 + \int\limits_{S^{N-1}_{+}} v v_{tt} \,d\sigma.
\]
From this identity and estimate \eqref{6.5a}, we deduce that
\begin{equation}\label{6.5b}
X X_{tt} \geq \int\limits_{S^{N-1}_{+}} v v_{tt} \,d\sigma.
\end{equation}
On the other hand, since the first eigenvalue of the Laplace-Beltrami operator $-\Gd'$ in $W^{1,2}_{0}(S^{N-1}_{+})$ is $N-1$, 
\begin{equation*}
(N-1) X^2 \leq \int\limits_{S^{N-1}_{+}} |\nabla' v|^2 \, d\sigma = - \int\limits_{S^{N-1}_{+}} v \, \Gd' v \, d\gs.
\end{equation*}
By H\"older's inequality,
\begin{equation*}
\int\limits_{S^{N-1}_{+}} v H \, d\sigma \leq  X \|H(t,\cdot)\|_{L^2(S^{N-1}_{+})}.
\end{equation*}
From the elementary inequality
\[
(v+\ga)^q \leq 2^q(v^q+\ga^q),
\]
we get
\[
\int\limits_{S^{N-1}_{+}} v (v+\ga)^q \, d\sigma  \leq 2^q \int\limits_{S^{N-1}_{+}} \big(v^{q+1} + v \ga^q \big) \, d\sigma 
\]
It follows from H\"older's inequality that
\begin{equation}\label{6.8}
\int\limits_{S^{N-1}_{+}} v (v+\ga)^q \, d\sigma  \leq 2^q \Big( X^2 \|v(t, \cdot)\|_{L^\infty(S^{N-1}_+)}^{q-1} + X \|\alpha(t, \cdot)\|_{L^{2q}(S^{N-1}_+)}^q \Big). 
\end{equation}
We may assume that $u$ is a nontrivial solution of \eqref{E1}.
By the strong maximum principle, we have $u > 0$ in $\Omega$, thus $X > 0$. Combining \eqref{6.3}, \eqref{6.5} and \eqref{6.5b}--\eqref{6.8}, one gets
\begin{multline*}
X_{tt} - \left(N-\tfrac{2(q+1)}{q-1}\right)X_{t}
+ \Big(\ell_{N,q} - N + 1 + 2^q \|v(t, \cdot)\|_{L^\infty}^{q-1} \Big)X \geq\\
\geq - \big( \|H(t,\cdot)\|_{L^2} + 2^q \|\alpha(t, \cdot)\|_{L^{2q}}^q \big)
\end{multline*}
(to simplify the notation we drop the explicit dependence of the set $S_+^{N-1}$).
From the definition of the function $\alpha$, there exists $C > 0$ such that
\[
2^q \|\alpha(t, \cdot)\|_{L^{2q}}^q \le C \e^{- \frac{2qt}{q-1}}.
\]
In view of \eqref{6.2}, given $\eps > 0$ there exists $t_0 > 0$ such that
\[
2^q \|v(t, \cdot)\|_{L^\infty}^{q-1}  \leq \eps \quad \text{on $[t_0, \infty)$.}
\]
We deduce that for every $t \geq t_0$ we have
\begin{equation*}
X_{tt} - \left(N-\tfrac{2(q+1)}{q-1}\right)X_{t}
+ \left(\ell_{N,q} - N + 1 + \eps \right)X \geq - \|H(t,\cdot)\|_{L^2} - C\e^{- \frac{2qt}{q-1}}.
\end{equation*}
We shall show that
\begin{equation*}
X(t) \leq C \e^{- \frac{2t}{q-1}} \quad \forall t \geq 0,
\end{equation*}
and the conclusion will now follow from a bootstrap argument. Note that the linear equation 
\begin{equation*}
Z_{tt} - \left(N-\tfrac{2(q+1)}{q-1}\right)Z_{t}
+ \left(\ell_{N,q} - N + 1 \right)Z  = 0
\end{equation*}
has two linearly independent solutions:
\[
Z_1(t) =  \e^{- \frac{q+1}{q-1} t} \quad \text{and} \quad Z_2(t) = \e^{(N - \frac{q+1}{q-1}) t}.
\]
We can then take $\eps>0$ small enough so that the linear equation 
\begin{equation*}
Z_{tt} - \left(N-\tfrac{2(q+1)}{q-1}\right)Z_{t}
+ \left(\ell_{N,q} - N + 1 + \eps \right)Z  = 0
\end{equation*}
has two linearly independent solutions:
\[
Z_{1, \eps}(t) =  \e^{r_{1, \eps} t} \quad \text{and} \quad Z_{1,\eps}(t) = \e^{r_{2, \eps} t}
\]
such that
\[
r_{1, \eps} < - \frac{2}{q-1}  \quad \text{and} \quad r_{2, \eps} > 0.
\]
In particular,
\[
Z_{2, \eps}(t) \to +\infty \quad \text{as } t \to +\infty.
\]
From assumption \eqref{6.1}, $v$ is bounded. In view of \eqref{4.3} and Proposition~\ref{prop5.1} with $p = 2$, there exists $C_1 > 0$ such that
\begin{equation*}
\|H(t,\cdot)\|_{L^2} \leq C_1 \e^{-t} \quad \forall t \ge 0.
\end{equation*}
Thus,
\begin{equation*}
X_{tt} - \left(N-\tfrac{2(q+1)}{q-1}\right)X_{t}
+ \left(\ell_{N,q} - N + 1 \right)X \geq - \hat C_1 \e^{-t}. 
\end{equation*}
Since 
\[
X(t) \to 0  \quad \text{as } t \to +\infty,
\]
from the maximum principle there exists a constant $\tilde C_1 > 0$ such that
\[
X(t) \le \tilde C_1 (Z_{1, \eps}(t) + \e^{-t}).
\]
If $r_{1, \eps} \ge -1$, then 
\[
X(t) \le 2 \tilde C_1 Z_{1, \eps}(t).
\]
Since $r_{1, \eps} < - \frac{2}{q-1}$, the estimate above implies that $u$ is bounded and thus by standard elliptic estimates $u$ is continuous. Otherwise $r_{1, \eps} < -1$, in which case, 
\[
X(t) \le 2 \tilde C_1 \e^{-t}.
\]
Thus, by Proposition~\ref{prop6.1} for every $T \ge 2$,
\[
\|v\|_{W^{2,2}(Q_{T, 2})} \le \widetilde C_1 \e^{-T}.
\]
In view of \eqref{4.3}, there exists $C_2 > 0$ such that
\begin{equation*}
\|H(t,\cdot)\|_{L^2} \leq C_2 \e^{-2t} \quad \forall t \ge 0.
\end{equation*}
Thus,
\begin{equation*}
X_{tt} - \left(N-\tfrac{2(q+1)}{q-1}\right)X_{t}
+ \left(\ell_{N,q} - N + 1 \right)X \geq - \hat C_2 \e^{-2t}. 
\end{equation*}
This implies as before that
\[
X(t) \le \tilde C_2 (Z_{1, \eps}(t) + \e^{-2t}).
\]
If $r_{1, \eps} \ge -2$, then 
\[
X(t) \le 2 \tilde C_2 Z_{1, \eps}(t)
\]
and $u$ is bounded. Otherwise $r_{1, \eps} < -2$, in which case, 
\[
X(t) \le 2 \tilde C_2 \, \e^{-2t}.
\]
We can continue this argument and deduce in finitely many steps that 
\[
X(t) \le 2 \tilde C_k Z_{1, \eps}(t).
\]
Applying Proposition~\ref{prop5.1} with $p > \frac{N}{2}$, we deduce that for every $T \ge 2$,
\[
\|v\|_{W^{2,p}(Q_{T,1})} \le C \Big( Z_{1, \eps}(T) + \e^{-\frac{2T}{q-1}} \Big) \le C \, \e^{-\frac{2T}{q-1}}.
\]
Thus, by Morrey's embedding,
\[
\|v\|_{L^\infty(Q_{T,1})} \le C \, \e^{-\frac{2T}{q-1}}.
\]
This implies that $u$ is bounded and hence continuous in $\overline{\Omega}$.
\end{proof}

The conclusion of Theorem~\ref{prop6.1} is false with the critical exponent $q = q_1$. In fact, combining Theorem~\ref{thm1.2a} and the result of del Pino-Musso-Pacard mentioned in the Introduction (Theorem~\ref{thm1.3}), when $q= q_1$ there exist solutions of \eqref{E1} such that
\[
u(x) \sim x_N |x|^{-N} \big(\log{\tfrac{1}{|x|}} \big)^{-\frac{N-1}{2}} 
\] 
in a neighborhood of $0$. These solutions are necessarily discontinuous at $0$ but, since $\frac{2}{q_1-1} = N-1$,
\[
\lim_{x \to 0}{ |x|^{\frac{2}{q_1-1}} u(x)} = 0.
\]

\smallskip

The right statement in this case is the following:

\begin{thm}\label{prop6.2} 
Let $q = q_{1}$ and let $u$ be a nonnegative solution of \eqref{E1}. If
\begin{equation*}
\lim_{x \to 0}{ |x|^{N-1} \big(\log{\tfrac{1}{|x|}} \big)^{\frac{N-1}{2}} u(x)} = 0,
\end{equation*}
then $u$ can be continuously extended at $0$.
\end{thm}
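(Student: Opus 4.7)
The plan is to adapt the ODI strategy of Theorem~\ref{prop6.1} to the critical exponent $q = q_1$, where that strategy breaks down because of a resonance in the linearized equation. Set $t = \log\tfrac{1}{r}$ and $v(t,\sigma) = r^{N-1}\tilde u(r,\sigma)$ as in Lemma~\ref{lemma5.0}. Since $q_1 - 1 = \tfrac{2}{N-1}$ and $\ell_{N,q_1} = N-1$, the hypothesis becomes
\[
t^{(N-1)/2}\|v(t,\cdot)\|_{L^\infty(S^{N-1}_+)} \to 0 \quad \text{as } t\to \infty,
\]
and equation \eqref{new1} specializes, using $N - \tfrac{2(q_1+1)}{q_1-1} = -N$, to
\[
v_{tt} + Nv_t + \Delta'v + (N-1)v + (v+\alpha)^{q_1} = H
\]
modulo the $\epsilon_j$-perturbations. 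Repeating the test-function computation \eqref{6.3}--\eqref{6.8} for $X(t) := \|v(t,\cdot)\|_{L^2(S^{N-1}_+)}$, the spectral-gap term $\ell_{N,q_1} - (N-1)$ vanishes identically, so the resulting ODI reads
\[
X_{tt} + NX_t + c(t) X \geq -\|H(t,\cdot)\|_{L^2(S^{N-1}_+)} - Ce^{-(N+1)t},
\]
with $c(t) = 2^{q_1}\|v(t,\cdot)\|_{L^\infty}^{q_1-1}$. The strict hypothesis yields the key quantitative bounds $c(t) \leq \delta(t)/t$ and $X(t) \leq \delta(t)\,t^{-(N-1)/2}$ with $\delta(t)\to 0$, so the zeroth-order coefficient in the ODI decays strictly faster than $1/t$.

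The heart of the argument is the study of the perturbed linear ODE $Z_{tt} + NZ_t + c(t)Z = 0$. Its limiting form $Z_{tt}+NZ_t = 0$ has fundamental solutions $1$ and $e^{-Nt}$: the two characteristic roots have collapsed to $\{0,-N\}$, and the constant mode is the resonant one produced by the critical nonlinearity. Because $c(t) = o(1/t)$, a Wronskian and variation-of-parameters argument (seeded from the fast fundamental solution $Z_-(t)\sim e^{-Nt}$ obtained via the ansatz $Z_- = e^{-Nt}w$ and a contraction for $w$) produces a second fundamental solution $Z_+(t)$ asymptotic to a positive constant $\gamma$. Comparing $X$ on intervals $[T,T^*]$ with $T^*\to\infty$ against a supersolution of the nonhomogeneous equation (after the conjugation $u\mapsto e^{-Nt/2}u$, which turns $L = \partial_t^2 + N\partial_t + c(t)$ into a maximum-principle-friendly operator with nonpositive zeroth-order coefficient), and using $X(t)\to 0$ together with the strict $o(t^{-(N-1)/2})$ decay to rule out the slow resonant mode $Z_+$, one deduces
\[
X(t) \leq C\,e^{-\nu t} \quad \text{for some } \nu \in (0,N).
\]

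Once exponential $L^2$-decay of $v$ is in hand, the bootstrap of Theorem~\ref{prop6.1} applies verbatim: Proposition~\ref{prop5.1} with $p > N/2$ upgrades $L^2$-decay to $L^\infty$-decay of the same rate, improves the decay of $H$, and iteration produces $\|v(t,\cdot)\|_{L^\infty}\leq Ce^{-2t/(q_1-1)}$. Translated back, this makes $\tilde u$ bounded near $0$, and hence $u$ bounded near $0$; standard elliptic regularity then yields the continuous extension. The main obstacle is the second step: at criticality both roots of the limiting characteristic equation are nonpositive, so unlike in the proof of Theorem~\ref{prop6.1} one cannot eliminate a mode merely from the boundedness of $X$, and the strict hypothesis must be used quantitatively. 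The scale $t^{-(N-1)/2} = t^{-1/(q_1-1)}$ is not arbitrary but is the natural decay rate of the resonant scalar ODE $P_{tt}+NP_t + c\,P^{q_1}\approx 0$ driven by the nonlinearity; it is precisely the singular profile realized by the solutions of del Pino--Musso--Pacard from Theorem~\ref{thm1.3}.
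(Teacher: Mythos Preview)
Your overall strategy matches the paper's, but the middle step---the asymptotic analysis of the linear comparison equation---has a genuine gap. From the hypothesis you only get $c(t)=2^{q_1}\|v(t,\cdot)\|_{L^\infty}^{q_1-1}=o(1/t)$, which is \emph{not} enough to conclude that the slow fundamental solution $Z_+$ of $Z_{tt}+NZ_t+c(t)Z=0$ tends to a positive constant: that would require $\int^\infty c(s)\,ds<\infty$, and $o(1/t)$ does not give this. (Think of $c(t)=1/(t\log t)$; then $Z_+$ decays to $0$, roughly like a negative power of $\log t$.) For the same reason, the contraction argument you sketch for $Z_-=e^{-Nt}w$ is not justified, since the relevant Volterra kernel is unbounded and needs integrability of $c$. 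With $Z_+(t)\to 0$ possible, your stated comparison ``$X\to 0$ rules out $Z_+$'' fails; you would instead need to show $X/Z_+\to 0$, which is not obvious from the information you have extracted.

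The paper's fix is precisely to build the weight into the unknown: set $W(t)=t^{(N-1)/2}X(t)$ and derive the ODI for $W$. The point is that the coefficients of the $W$-equation are now \emph{explicit} rational functions of $t$ coming from differentiating $t^{(N-1)/2}$, together with the bound $c(t)\le \eps/t$; one lands exactly in the setting of Lemma~\ref{lemma2.3}, which gives two fundamental solutions $W_{1,\eps}\lesssim t^{(N-1)/2}e^{-(N-1)t}$ and $W_{2,\eps}\sim t^{(N-1)/2-\eps/N}\to+\infty$. The hypothesis is precisely $W(t)\to 0$, so the diverging mode is killed by a clean maximum-principle comparison and one gets $W(t)\le C e^{-t}$; the bootstrap then proceeds as you describe. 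In other words, the $t^{(N-1)/2}$ weighting is not cosmetic: it is what converts the borderline situation (both modes for $X$ decay) into a non-borderline one (one mode for $W$ diverges), making the comparison argument work without any delicate asymptotics for an ODE with non-integrable $o(1/t)$ coefficient.
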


\begin{proof}
Let 
$$
W(t)= t^{\frac{N-1}{2}} \|v(t, \cdot)\|_{L^2(S^{N-1}_{+})} \quad \forall t \geq 0,
$$
where $v$ is the function given by \eqref{4.2}. By assumption, $W(t) \to 0$ as $t \to +\infty$. As in the proof of Theorem~\ref{prop6.1}, for any $\eps>0$ there exists $t_0 > 0$ such that for every $t \ge t_0$,
\begin{multline*}
W_{tt} + \left(N - \tfrac{N-1}{t} \right)W_{t}
+ \frac{1}{t} \left(-\tfrac{N(N-1)}{2}  + \eps + \tfrac{N^2-1}{4t} \right) W \geq \\
\geq - t^{\frac{N-1}{2}} \|H(t,\cdot)\|_{L^2} - C\,t^{\frac{N-1}{2}} \e^{- (N+1) t}.
\end{multline*}
The linear equation 
\begin{equation*}
W_{tt} + \left(N - \tfrac{N-1}{t} \right)W_{t}
+ \frac{1}{t} \left(- \tfrac{N(N-1)}{2} + \tfrac{N^2-1}{4t} \right) W = 0
\end{equation*}
has two linearly independent solutions $W_1$ and $W_2$ such that for $t$ sufficiently large (see Lemma~\ref{lemma2.3} below)
\[
W_1(t) = t^{\frac{N-1}{2}} \e^{- N t} (1 + o(1)) \quad \text{and} \quad W_2(t) = t^{\frac{N-1}{2}} (1 + o(1)).
\]
We can then take $\eps>0$ small enough so that the linear equation 
\begin{equation*}
W_{tt} + \left(N - \tfrac{N-1}{t} \right)W_{t}
+ \frac{1}{t} \left(\tfrac{ - N(N-1)}{2} + \eps  + \tfrac{N^2-1}{4t} \right) W = 0
\end{equation*}
has two linearly independent solutions $W_{1, \eps}$ and $W_{2, \eps}$ such that
\[
W_{1, \eps}(t) \le C t^{\frac{N-1}{2}} \e^{- (N-1) t}
\]
and
\[
W_{2, \eps}(t) \to +\infty \quad \text{as } t \to +\infty.
\]
In view of \eqref{4.3} and Proposition~\ref{prop5.1} with $p = 2$, there exists $C_1 > 0$ such that
\begin{equation*}
\|H(t,\cdot)\|_{L^2} \leq C_1 t^{-\frac{N-1}{2}}\e^{-t} \quad \forall t > 0.
\end{equation*}
Thus,
\begin{equation*}
W_{tt} + \left(N - \tfrac{N-1}{t} \right)W_{t}
+ \frac{1}{t} \left(-\tfrac{N(N-1)}{2} + \eps + \tfrac{N^2-1}{4t} \right)W \geq - C \e^{- t}.
\end{equation*}
Since 
\[
W(t) \to 0  \quad \text{as } t \to +\infty,
\]
from the maximum principle there exists a constant $\tilde C_1 > 0$ such that
\[
W(t) \le \tilde C_1 (W_{1, \eps}(t) + \e^{-t}).
\]
Thus, 
\[
W(t) \le \hat C_{1} \e^{-t}.
\]
Thus, by Proposition~\ref{prop5.1} with $p=2$, for every $T \ge 2$,
\[
\|v\|_{W^{2,2}(Q_{T, 2})} \le \widehat C_1 t^{-\frac{N-1}{2}}\e^{-T}.
\]
In view of \eqref{4.3}, there exists $\hat C_2 > 0$ such that
\begin{equation*}
\|H(t,\cdot)\|_{L^2} \leq \hat C_2 t^{-\frac{N-1}{2}}\e^{-2t} \quad \forall t \ge 0.
\end{equation*}
We can continue this argument as in the previous theorem and deduce after finitely many steps that 
\[
W(t) \le \hat C_{k} W_{1, \eps}(t) \le  \tilde C \, t^{\frac{N-1}{2}} \e^{- (N-1) t},
\]
which implies that $u$ is bounded and hence continuous in $\overline{\Omega}$.
\end{proof}


\section{Proof of Theorem~\ref{thm1.2}}\label{sec4}

We first establish the following

\begin{prop}\label{cv1} 
Let $q_1 \leq q < q_3$, with $q \neq q_2$. If $u$ is a solution of \eqref{E1} such that
\begin{equation*}
\abs x^{\frac{2}{q-1}}u(x) \; \text{ is bounded in } \Omega,
\end{equation*}
then for every $\eps > 0$ there exists $\delta > 0$ such that if $x \in \Omega \setminus \{0\}$, $\frac{x}{|x|} \in S_+^{N-1}$ and $|x| < \delta$, then
\begin{equation}\label{4.1}
\Big| |x|^{\frac{2}{q-1}} u(x) - w\big(\tfrac{x}{|x|}\big) \Big| < \eps
\end{equation}
where $w$ is a solution of \eqref{En3}.
\end{prop}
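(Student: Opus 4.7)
The plan is to use the change of variables from Section~\ref{sec5}, so that the function $v(t,\gs)$ defined in \eqref{4.2} is uniformly bounded on $Q_0 = (0,+\infty)\times S_+^{N-1}$ by hypothesis, and satisfies the perturbed equation \eqref{new1}. By Proposition~\ref{prop5.1} applied with $p$ large and Morrey's embedding, $v$ is uniformly bounded in $C^{1,\gamma}(Q_{T,1})$ for some $\gamma\in(0,1)$, uniformly in $T\ge 2$. Consequently, for every sequence $T_n\to +\infty$, a subsequence of the translates $v_n(t,\gs) := v(T_n+t,\gs)$ converges in $C^{1}_{\loc}(\RR \times \overline{S_+^{N-1}})$ to a nonnegative limit $v_\infty$. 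Using the decay \eqref{4.3} of the $\eps_j$ and of the function $\ga$, the limit satisfies, on $\RR\times S_+^{N-1}$, the autonomous equation
\begin{equation*}
v_{tt} + \Delta' v - \kappa v_t + \ell_{N,q} v + v^q = 0, \qquad v=0 \text{ on } \RR\times \partial S_+^{N-1},
\end{equation*}
with $\kappa = N - \tfrac{2(q+1)}{q-1}$.

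The core of the proof is to show that $v_\infty$ is independent of $t$, and for this I introduce the Lyapunov functional
\begin{equation*}
\mathcal{E}(t) = \int_{S_+^{N-1}} \Big(\tfrac{v_t^2}{2} - \tfrac{|\nabla' v|^2}{2} + \tfrac{\ell_{N,q}}{2}v^2 + \tfrac{v^{q+1}}{q+1}\Big)\, d\gs.
\end{equation*}
Multiplying \eqref{new1} by $v_t$ and integrating by parts on the sphere (using that $v_t=0$ on $\partial S_+^{N-1}$), one obtains
\begin{equation*}
\mathcal{E}'(t) = \kappa \int_{S_+^{N-1}} v_t^2\, d\gs + R(t),
\end{equation*}
where $R(t)$ gathers the contributions of $H$ from \eqref{new2} and of $(v+\ga)^q - v^q$. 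The uniform $C^{1,\gamma}$ bound together with \eqref{4.3} and $\|\ga(t,\cdot)\|_{L^\infty}\le C\e^{-\frac{2t}{q-1}}$ yield $|R(t)|\le C\e^{-ct}$ for some $c>0$, and $\mathcal E$ itself is uniformly bounded because $v,v_t$ are. Since $q\neq q_2$ gives $\kappa\neq 0$, integration from $0$ to $+\infty$ yields
\begin{equation*}
\int_0^{+\infty} \int_{S_+^{N-1}} v_t^2 \, d\gs\, dt < +\infty.
\end{equation*}

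The uniform $C^{1,\gamma}$ bound makes $t \mapsto \|v_t(t,\cdot)\|_{L^2}^2$ uniformly continuous in $t$, and the integrability above then forces $\|v_t(t,\cdot)\|_{L^2}\to 0$ as $t\to+\infty$; a further application of Proposition~\ref{prop5.1} with $p>\tfrac{N}{2}$ together with Morrey's embedding upgrades this to $\|v_t(t,\cdot)\|_{L^\infty}\to 0$. Passing to the limit in the translated sequence, $(v_\infty)_t\equiv 0$, so $v_\infty(t,\gs) = \omega(\gs)$ where $\omega\ge 0$ solves \eqref{En3}. By Theorem~\ref{thm1.0} the set of such $\omega$ is either $\{0\}$ (if $q=q_1$) or $\{0,\omega_0\}$ (if $q_1<q<q_3$, $q\neq q_2$), where $\omega_0$ denotes the unique positive solution. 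The $\omega$-limit set of $v(T,\cdot)$ in $C(\overline{S_+^{N-1}})$ is connected by the uniform $C^{1,\gamma}$ control, and being contained in a discrete set it reduces to a single element; unwinding \eqref{4.2} and absorbing the vanishing contribution of $\ga$ produces \eqref{4.1}.

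The main obstacle is to establish this almost-monotonicity of $\mathcal E$ with an integrable remainder: one must control the perturbations $\eps_1,\ldots,\eps_6$ multiplying derivatives of $v$ in \eqref{new2} and the nonlinear correction $(v+\ga)^q-v^q$ quantitatively in terms of the $C^{1,\gamma}$ bound. Once this is done, Theorem~\ref{thm1.0} and the connectedness of the $\omega$-limit set close the argument, and the restriction $q\neq q_2$ is used precisely to ensure $\kappa\neq 0$, which is what gives the $L^2$-in-time bound on $v_t$.
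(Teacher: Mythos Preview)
Your argument is correct and follows essentially the same route as the paper: both introduce the Lyapunov-type functional obtained by multiplying \eqref{new1} by $v_t$ and integrating over $S_+^{N-1}$, use $q\neq q_2$ to get $\int_0^\infty\!\int v_t^2<\infty$, upgrade to $v_t\to 0$ uniformly via the $W^{2,p}$ estimates of Proposition~\ref{prop5.1}, and conclude by the connectedness of the $\omega$-limit set together with the discreteness of solutions of \eqref{En3} from Theorem~\ref{thm1.0}. One small remark: the control of the remainder $R(t)$ (in particular of the $\epsilon_1 v_{tt}$ and $\langle\nabla' v_t,\overrightarrow{\epsilon_5}\rangle$ terms in $H$) requires the full $W^{2,p}$ bound from Proposition~\ref{prop5.1}, not merely the $C^{1,\gamma}$ bound you emphasize, but you have already invoked that proposition so the needed estimate is available.
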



\begin{proof} 
Let $v$ be the function given by \eqref{4.2}. We first rewrite equation \eqref{lift4} under the form
\begin{equation}\label{new1x}
v_{tt} 
+\ell_{N,q}v+\Gd'v+(v+\ga)^q - \left(N-\tfrac{2(q+1)}{q-1}\right)v_{t} = H,
\end{equation}
where $H$ is given by \eqref{new2}.
Multiplying \eqref{new1x} by $v_{t}$ and integrating over $S^{N-1}_{+}$ yields
\begin{multline*}
\int\limits_{S^{N-1}_{+}} v_t v_{tt} \, d\sigma +\ell_{N,q} \int\limits_{S^{N-1}_{+}} v_t v  \, d\sigma + \int\limits_{S^{N-1}_{+}} v_t \, \Gd'v \, d\sigma 
+ \int\limits_{S^{N-1}_{+}} v_t (v+\ga)^q \, d\sigma + \\ - \left(N-\tfrac{2(q+1)}{q-1}\right) \int\limits_{S^{N-1}_{+}} (v_{t})^2 \, d\sigma  = \int\limits_{S^{N-1}_{+}}  v_t H \, d\sigma.
\end{multline*}
Thus,
\begin{multline}\label{f2}
\frac{d}{dt}\int\limits_{S^{N-1}_{+}} \bigg[\frac{(v_{t})^2}{2}  + \frac{\ell_{N,q} v^2}{2} - \frac{\abs{\nabla'v}^2}{2} + \frac{(v+\ga)^{q+1}}{q+1} \bigg] \, d\gs - \left(N-\tfrac{2(q+1)}{q-1}\right) \int\limits_{S^{N-1}_{+}} (v_{t})^2 \, d\sigma  = \\
= \int\limits_{S^{N-1}_{+}} \Big[ v_t H + \ga_{t} (v+\ga)^{q} \Big] \, d\sigma.
\end{multline}
From our assumption on $u$, $v$ is bounded. It follows from \eqref{5.3} and the Sobolev imbedding that $v$, $v_t$ and $\nabla' v$ are uniformly bounded in $S^{N-1}_+ \times \RR_+$. Integrating \eqref{f2} from $0$ to $T$, for any $T > 0$,
one deduces that
$$
\int\limits_{S^{N-1}_{+}} \bigg|\frac{(v_{t})^2}{2}  + \frac{\ell_{N,q} v^2}{2} - \frac{\abs{\nabla'v}^2}{2} + \frac{(v+\ga)^{q+1}}{q+1} \bigg| \, d\gs \leq C \quad \text{in } \RR_+
$$
for some constant $C > 0$. On the other hand, 
$$
\int\limits_{S_+^{N-1}} |v_{t} H| \, d\sigma \leq C \e^{-t}.
$$
Moreover, since $v$ is bounded and $\alpha$ satisfies \eqref{5.4}, we have
$$
\int\limits_{S^{N-1}_{+}} |\ga_{t}| (v+\ga)^{q}  \, d\sigma \leq C \e^{- \frac{2t}{q-1}}.
$$
Thus, integrating \eqref{f2} on $(0, +\infty)$, we obtain
\begin{equation*}
\left|N-\tfrac{2(q+1)}{q-1}\right| \int_{0}^{+\infty} \!\!\!\! \int\limits_{S^{N-1}_{+}}
v_{t}^2 \, d\gs< + \infty.
\end{equation*}
Since $q \neq q_{2}$, $N-\frac{2(q+1)}{q-1} \neq 0$. Hence,
\[
\int_{0}^{+\infty} \!\!\!\! \int\limits_{S^{N-1}_{+}}
v_{t}^2 \, d\gs< +\infty.
\]
By \eqref{5.3} and Morrey's estimates, $v_t$ is uniformly continuous on $Q_0$. We deduce that
\[
v_t(t, \cdot) \to 0 \quad \text{uniformly in $S_+^{N-1}$ as } t \to +\infty.
\]

We now prove that
\[
v(t,\cdot) \to w \quad \text{uniformly in $S_+^{N-1}$ as } t \to +\infty,
\]
where $w$ is a nonnegative solution of \eqref{En3}. For this purpose, we study the limit set of the trajectories of $v$, namely the set
$$
\Gg=\bigcap_{\gt > 0}\overline{\bigcup_{t \geq \gt}\{v(t,.)\}},
$$ 
where the closure is computed with respect to the usual norm in $C^0(S_+^{N-1})$. Since $\Gamma$ is the intersection of a decreasing family of closed connected subsets of $C^0(S_+^{N-1})$, $\Gamma$ is closed and connected. In addition, since $v$ is uniformly continuous in $Q_0$, it follows from the Arzel\`a-Ascoli theorem that $\Gamma$ is also compact and nonnempty.

We claim that every $w \in \Gamma$ satisfies problem \eqref{En3}. Indeed, let $(t_k)$ be a sequence of nonnegative real numbers such that $t_k \to +\infty$ and
$$
v(t_k, \cdot) \to w \quad \text{uniformly in } S_+^{N-1}.
$$
Clearly, $w$ is nonnegative and $w = 0$ on $\partial S_+^{N-1}$. For each $k \ge 1$, let
$$
V_k : (s, \sigma) \in [0, 1] \times S_+^{N-1} \longmapsto v(t_k + s, \sigma).
$$
For every $\varphi \in C_0^\infty(S_+^{N-1})$ and for every $\eps \in (0, 1)$, from the equation satisfied by $v$ we have
\begin{multline*}
\int_0^\eps \!\! \int\limits_{S^{N-1}_{+}} \Big[ (V_k)_{tt}\varphi + \ell_{N,q} V_k \varphi + V_k  \Delta'\varphi +(V_k+\ga)^q \varphi - \left(N-\tfrac{2(q+1)}{q-1}\right)(V_k)_{t} \varphi \Big] \, d\sigma \, dt =\\
 =  \int_{t_k}^{t_k + \eps} \!\!\!\! \int\limits_{S^{N-1}_{+}}  H \varphi \, d\sigma \, dt.
\end{multline*}
As $k \to +\infty$,
\[
\int_{t_k}^{t_k + \eps} \!\!\!\! \int\limits_{S^{N-1}_{+}}  H \varphi \, d\sigma \, dt\to 0.
\]
Since $v_t \to 0$ uniformly as $t \to +\infty$, we also have
\[
\int_0^\eps \!\! \int\limits_{S^{N-1}_{+}} (V_k)_{t} \varphi \, d\sigma \, dt \to 0.
\]
Note that
\[
\int_{0}^{\eps} \!\!\!\! \int\limits_{S_+^{N-1}} (V_k)_{tt} \varphi \, d\sigma \, d\tau = \int\limits_{S_+^{N-1}} \big[v_{t}(t_k+ \eps, \sigma) - v_{t}(t_k, \sigma) \big] \varphi \, d\sigma \to 0.
\]
Since the sequence $(V_k)$ is bounded in $C^1$, passing to a subsequence if necessary, we may assume that for some continuous function $W$,
$$
V_k \to W  \quad \text{uniformly in } [0, 1] \times S_+^{N-1}.
$$
We conclude that for every $\eps \in (0, 1)$,
\begin{equation*}
\int_0^\eps \!\! \int\limits_{S^{N-1}_{+}} \Big[ \ell_{N,q} W \varphi - W \Delta'\varphi + W^q \varphi \Big] \, d\sigma \, dt = 0.
\end{equation*}
Dividing both sides by $\eps$ and letting $\eps \to 0$, we get
$$
\int\limits_{S^{N-1}_{+}} \Big[ \ell_{N,q} W(0, \sigma) \varphi - W(0, \sigma) \Delta'\varphi + (W(0, \sigma))^q \varphi \Big] \, d\sigma = 0.
$$ 
Since $w = W(0,\cdot)$, we conclude that $w$ satisfies \eqref{En3}.
Hence, every element of $\Gamma$ is a nonnegative solution of \eqref{En3}. Since these solutions form a discrete subset of $C^0(S_+^{N-1})$ and $\Gamma$ is connected (in our case, the set of nonnegative solutions is $\{0, \omega\}$, where $\omega$ is the unique positive solution of \eqref{En3}), $\Gamma$ contains a single element.
In particular,
\[
v(t,\cdot) \to w \quad \text{uniformly in $S_+^{N-1}$ as } t \to +\infty.
\]
The proposition follows from this convergence.
\end{proof}

\begin{proof}[Proof of Theorem~\ref{thm1.2}]
Let $u$ be a solution of \eqref{E1}. Since $q < q_2$, by Theorem~\ref{thm1.1} there exists $C > 0$ such that for every $x \in \Omega$,
\[
0 \le \abs x^{\frac{2}{q-1}} u(x) \le C.
\]
Thus, by Proposition~\ref{cv1}, there exists a solution $w$ of \eqref{En3} such that \eqref{4.1} holds. Either $w$ is the unique positive solution of \eqref{En3} (see Theorem~\ref{thm1.0}) or $w = 0$. If $w = 0$, then
$$
\lim_{x \to 0}{\abs x^{\frac{2}{q-1}} u(x) } = 0.
$$
Hence, by Theorem~\ref{prop6.1} $u$ can be continuously extended at $0$.
\end{proof}


\section{Proof of Theorem~\ref{thm1.2a}}\label{sec7a}

We first prove an estimate which improves Theorem~\ref{thm1.1} when $q= q_1$, except that we do not know whether the constant $C$ below can be chosen independently of the solution.

\begin{thm}\label{prop4.1}
Assume that $q = q_1$. Then, every solution of \eqref{E1} satisfies
\begin{equation*}
u(x) \leq C |x|^{-(N-1)} \big(\log{\tfrac{1}{|x|}} \big)^{-\frac{N-1}{2}} \quad \forall x \in \Omega,
\end{equation*}
for some constant $C>0$ possibly depending on the solution.
\end{thm}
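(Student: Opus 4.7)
I would pass to the transformed function $v(t,\sigma)=r^{N-1}\tilde u(r,\sigma)$ with $t=\log(1/r)$ from Lemma~\ref{lemma5.0}. At the critical exponent $q=q_1$ two cancellations make the analysis tractable: $\ell_{N,q_1}=N-1$ coincides with the first Dirichlet eigenvalue of $-\Delta'$ on $S_+^{N-1}$, and the damping coefficient is $N-\tfrac{2(q+1)}{q-1}=-N$. So, up to exponentially small errors, \eqref{lift4} reads
\[
v_{tt}+Nv_t+(N-1)v+\Delta'v+(v+\alpha)^q = H.
\]
Theorem~\ref{thm1.1} (applicable since $q_1<q_2$) gives $v\le C$ on $Q_0$, and Proposition~\ref{prop5.1} supplies uniform $W^{2,p}$ estimates on each slab $Q_{T,1}$. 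Because the damping has a definite sign, the natural energy
\[
\mathcal{E}(t)=\int_{S_+^{N-1}}\Big[\tfrac12 v_t^2-\tfrac12\big(|\nabla'v|^2-(N-1)v^2\big)+\tfrac{(v+\alpha)^{q+1}}{q+1}\Big]\,d\sigma
\]
satisfies $\mathcal{E}'(t)+N\int v_t^2\,d\sigma=O(e^{-t})$, whence $\int_0^\infty\!\!\int v_t^2\,d\sigma\,dt<\infty$. A LaSalle-type argument parallel to Proposition~\ref{cv1} then locates the $\omega$-limit of $\{v(t,\cdot)\}$ inside the nonnegative solution set of \eqref{En3}, which by Theorem~\ref{thm1.0}(i) is $\{0\}$. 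Hence $v(t,\cdot)\to 0$ uniformly.

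The heart of the proof is to extract the sharp rate $t^{-(N-1)/2}$ by projecting onto the first eigenfunction $\phi(\sigma)=x_N/|x|$. Let $Y(t)=\int v(t,\cdot)\phi\,d\sigma\ge 0$. Testing the PDE against $\phi$, the resonance cancellation $\ell_{N,q_1}Y+\int\phi\Delta'v\,d\sigma=0$ leaves
\[
Y''(t)+NY'(t)+\int(v+\alpha)^q\phi\,d\sigma = O(e^{-t}).
\]
Jensen's inequality for the probability measure $\phi\,d\sigma/\!\int\phi$ yields $\int v^q\phi\,d\sigma\ge C_0 Y^q$ with $C_0=(\int\phi)^{1-q}>0$, so
\[
Y''+NY'+C_0 Y^q \le Ce^{-t}.
\]
Excluding the trivial case where $u$ extends continuously to $0$ (so $Y>0$ on $(0,\infty)$ by the strong maximum principle), I would substitute $f(t)=Y(t)^{-(q-1)}=Y^{-2/(N-1)}$; exploiting the critical-exponent identity $q(N-1)/2=(N+1)/2$, a direct computation converts the inequality into
\[
f''+Nf'\ge\tfrac{2C_0}{N-1}+\tfrac{N+1}{2}\tfrac{(f')^2}{f}-Ce^{-t}f^{(N+1)/2}.
\]
A bootstrap (a crude polynomial a priori upper bound on $f$ renders the exponential remainder negligible) reduces this to $f''+Nf'\ge c>0$ for $t\ge T_0$. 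Integrating with the factor $e^{Nt}$ gives $f'(t)\ge c_1$ eventually, so $f(t)\ge c_1 t$, i.e., $Y(t)\le Ct^{-(N-1)/2}$.

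Finally I would upgrade the projection bound to a pointwise bound on $v$. The orthogonal component $\psi=v-Y\phi/\|\phi\|_{L^2}^2$ satisfies an equation whose linearization enjoys the spectral gap $\lambda_2-(N-1)>0$, where $\lambda_2$ is the second Dirichlet eigenvalue of $-\Delta'$ on $S_+^{N-1}$. A parallel energy estimate, using the coercivity $\int|\nabla'\psi|^2\ge\lambda_2\int\psi^2$ on $\phi^\perp$ together with the uniform smallness of $v$, gives $\|\psi(t,\cdot)\|_{L^2}=O(t^{-(N-1)/2})$ (in fact exponentially smaller). Combining, $\|v(t,\cdot)\|_{L^2}=O(t^{-(N-1)/2})$; Proposition~\ref{prop5.1} with $p>N/2$ and the Morrey embedding then upgrade this to $\|v(t,\cdot)\|_{L^\infty}=O(t^{-(N-1)/2})$, which translates back to the claimed estimate for $u$. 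The main obstacle is the derivation of the polynomial rate for $Y$: second-order ODE inequalities resist direct supersolution comparison, and the substitution $f=Y^{-(q-1)}$, though it cleanly exploits the special identity at $q=q_1$, introduces an exponentially small remainder whose control requires a careful bootstrap.
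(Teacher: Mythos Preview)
Your outline is essentially the paper's proof: the same projection onto the first eigenfunction to reach $Y''+NY'+cY^{q_1}\le Ce^{-t}$ via Jensen, the same substitution $f=Y^{1-q}$ (this is exactly Lemma~\ref{lemma2.1}), the same spectral-gap estimate on $\phi^\perp$ (the paper obtains $\|v_2\|_{L^2}\le Ce^{-t/2}$ using the second Dirichlet eigenvalue $2N$), and the same $L^2\to L^\infty$ upgrade through Proposition~\ref{prop5.1}. The one place the paper is cleaner than your bootstrap---precisely the obstacle you flag---is that Lemma~\ref{lemma2.1} first replaces $Y$ by $z=Y+Ae^{-t}$ with $A$ large, which (using $N>1$) yields $z''+Nz'+bz^{q_1}\le 0$ outright; only then does it pass to $w=z^{1-q}$, giving $w''+Nw'\ge b(q-1)$ with no remainder left to control.
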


In the proof of this result we need the following lemma:

\begin{lemma}\label{lemma4.1}
Let $a= q_1$ and $E = \ker{[\Delta' + (N-1) I]}$.  Given a solution of \eqref{E1}, denote by $v$ the function given by \eqref{4.2}. If
\[
v = v_1 + v_2
\]
is the decomposition of $v$ as the orthogonal projections in $L^2(S^{N-1}_+)$ onto $E$ and $E^\bot$, respectively, then
\begin{equation}\label{4.6}
\|v_1(t, \cdot)\|_{L^2(S^{N-1}_+)} \leq C \, t^{- \frac{N-1}{2}} \quad \text{and} \quad  \|v_2(t, \cdot)\|_{L^2(S^{N-1}_+)} \leq C \, \e^{-\frac{t}{2}} \quad \forall t > 0.
\end{equation}
\end{lemma}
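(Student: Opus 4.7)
The plan is to project equation \eqref{lift4} at $q=q_1$ onto $E=\mathbb{R}\phi$ (with $\phi(\sigma)=\sigma_N/|\sigma|$, normalized to $\|\phi\|_{L^2(S^{N-1}_+)}=1$ and $-\Delta'\phi=(N-1)\phi$) and onto its $L^2$-orthogonal complement $E^\perp$, and analyze the two projections separately. The algebraic facts driving the decomposition are $\ell_{N,q_1}=N-1$ (the first Dirichlet eigenvalue on $S^{N-1}_+$) and $N-\tfrac{2(q+1)}{q-1}\big|_{q_1}=-N$, reducing the leading part of the equation from Lemma~\ref{lemma5.0} to
\[
v_{tt}+Nv_t+\Delta'v+(N-1)v+(v+\alpha)^q=H,
\]
with $\|H(t,\cdot)\|_{L^2}\le Ce^{-t}$ by \eqref{4.3}, Theorem~\ref{thm1.1} (which gives $v$ bounded), and Proposition~\ref{prop5.1}. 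On $E$ the restoring operator $\Delta'+(N-1)I$ vanishes (resonance); on $E^\perp$ it satisfies $-(\Delta'+(N-1)I)\ge (N+1)I$ since the second Dirichlet eigenvalue on $S^{N-1}_+$ is $\lambda_2=2N$ (realized by the degree-$2$ spherical harmonics antisymmetric in $x_N$). A preliminary ingredient is the crude uniform limit $v(t,\cdot)\to 0$ on $S^{N-1}_+$, obtained by the Lyapunov argument of Proposition~\ref{cv1} (valid at $q_1$ since $q_1\ne q_2$): one deduces $\int_0^\infty\|v_t\|_{L^2}^2\,dt<+\infty$, hence $v_t\to 0$ uniformly via Proposition~\ref{prop5.1}, and every cluster point of $\{v(t,\cdot)\}$ solves \eqref{En3}, which by Theorem~\ref{thm1.0}(i) must be zero.

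For the exponential bound on $v_2:=\Pi_{E^\perp}v$, I would project the equation onto $E^\perp$, obtaining
\[
(v_2)_{tt}+N(v_2)_t+\Delta'v_2+(N-1)v_2=-\Pi_{E^\perp}\bigl[(v+\alpha)^q\bigr]+\Pi_{E^\perp}[H]+\text{l.o.t.},
\]
and introduce the modified energy
\[
\mathcal{E}_2(t):=\tfrac{1}{2}\|\partial_t v_2\|_{L^2}^2+\tfrac{1}{2}\bigl(\|\nabla' v_2\|_{L^2}^2-(N-1)\|v_2\|_{L^2}^2\bigr)+\beta\int_{S^{N-1}_+}v_2\,\partial_t v_2\,d\sigma,
\]
with $\beta>0$ small. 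The spectral gap makes $\|\nabla' v_2\|^2-(N-1)\|v_2\|^2\ge(N+1)\|v_2\|^2$ on $E^\perp$, so $\mathcal{E}_2$ is coercive (comparable to $\|\partial_t v_2\|^2+\|v_2\|^2$). Differentiating along the flow, using the equation, and applying Cauchy--Schwarz to absorb the cross-terms yields, after using $\|v(t,\cdot)\|_\infty\to 0$ to control the nonlinear contribution,
\[
\mathcal{E}_2'(t)\le -\gamma\,\mathcal{E}_2(t)+C\sqrt{\mathcal{E}_2(t)}\,e^{-t},\qquad\gamma>1,\quad t\ge t_0.
\]
Gronwall then forces $\mathcal{E}_2(t)\le Ce^{-t}$ for large $t$, and in particular $\|v_2(t,\cdot)\|_{L^2}\le Ce^{-t/2}$, which is the second bound in the lemma.

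For the algebraic bound on $v_1=c(t)\phi$, I project \eqref{lift4} onto $\phi$. Resonance cancels the linear restoring part and leaves
\[
c''(t)+Nc'(t)+\int_{S^{N-1}_+}(v+\alpha)^q\phi\,d\sigma=\int_{S^{N-1}_+}H\phi\,d\sigma+\text{l.o.t.}
\]
Using the just-established exponential bound on $v_2$ together with $\|\alpha\|_\infty\le Ce^{-(N-1)t}$, I expand
\[
\int_{S^{N-1}_+}(v+\alpha)^q\phi\,d\sigma=K_0\,c(t)^q+O\bigl(c(t)^{q-1}e^{-t/2}\bigr),\qquad K_0:=\int_{S^{N-1}_+}\phi^{q+1}\,d\sigma>0,
\]
so $c$ satisfies the perturbed autonomous ODE $c''+Nc'+K_0\,c^q=r(t)$ with $|r(t)|\le Ce^{-t/2}$. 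The asymptotic balance between $Nc'$ and $K_0c^q$ produces the power law $c(t)\sim A\,t^{-1/(q-1)}=A\,t^{-(N-1)/2}$ with $A^{q-1}=N/\bigl((q-1)K_0\bigr)$. Rigorously, one verifies that the super-solution $\bar c(t)=A_1(t+t_0)^{-(N-1)/2}$, for $A_1$ and $t_0$ sufficiently large, satisfies $\bar c''+N\bar c'+K_0\bar c^q\ge r(t)$, and a comparison argument based on positivity of $c$, the monotonicity of $c\mapsto c^q$, and the exponentially small right-hand side $r(t)$ gives $c(t)\le Ct^{-(N-1)/2}$, yielding the first bound $\|v_1(t,\cdot)\|_{L^2}\le Ct^{-(N-1)/2}$.

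The main obstacle is the step for $v_2$: the linearization $\partial_t^2+N\partial_t-\mathcal{A}$ on $E^\perp$, where $\mathcal{A}:=-(\Delta'+(N-1)I)\ge(N+1)I$, has saddle-type spectrum on every eigenmode (one positive and one negative characteristic root), so the natural ``wave'' energy $\tfrac{1}{2}\|\partial_tv_2\|^2+\tfrac{1}{2}\langle\mathcal{A}v_2,v_2\rangle$ is not a Lyapunov function. The cross-term $\beta\int v_2\,\partial_tv_2\,d\sigma$ must be tuned finely against the damping $-N\|\partial_tv_2\|^2$ so that the resulting inequality is dissipative simultaneously in $\partial_tv_2$ and in $v_2$, and this closure only works once the $L^\infty$-smallness of $v$ from the preliminary step has been invoked to absorb the bounded-but-not-small nonlinear forcing $(v+\alpha)^q$.
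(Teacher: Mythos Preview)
Your energy scheme for $v_2$ cannot close. On $E^\perp$ the linear part is $\partial_t^2+N\partial_t-\mathcal A$ with $\mathcal A\ge (N+1)I$, and on the lowest mode the characteristic roots are $1$ and $-(N+1)$. Along the growing solution $e^{t}$ of the homogeneous problem, any coercive quadratic form in $(v_2,\partial_t v_2)$---in particular your $\mathcal E_2$---behaves like $e^{2t}$, so an inequality $\mathcal E_2'\le -\gamma\,\mathcal E_2+C\sqrt{\mathcal E_2}\,e^{-t}$ with $\gamma>0$ is impossible: it would force exponential decay of $\mathcal E_2$ for \emph{every} solution of the linear equation. Concretely, when you differentiate $\mathcal E_2$ you produce the term $2\langle \mathcal A v_2,\partial_t v_2\rangle$ (from the ``wrong-sign'' potential) and the term $+\beta\langle \mathcal A v_2,v_2\rangle$ (from the cross term), neither of which can be absorbed by the damping $-N\|\partial_t v_2\|^2$. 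The $L^\infty$-smallness of $v$ that you invoke only tames the nonlinear forcing; it cannot repair the linear saddle structure.

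The paper avoids this by working with the scalar $Y(t)=\|v_2(t,\cdot)\|_{L^2}$. Using $YY_{tt}\ge\int v_2\,v_{tt}$, the spectral gap $\lambda_2=2N$, and the smallness of $\|v(t,\cdot)\|_{L^\infty}$, one derives the \emph{second-order} differential inequality $Y_{tt}+NY_t-(N+1-\varepsilon)Y\ge -Ce^{-t}$. This inequality by itself allows growing solutions; the crucial extra input is the a priori fact $Y(t)\to 0$ (equivalently: the trajectory lies on the stable manifold), which lets the maximum principle rule out the unstable mode and yields $Y(t)\le C(e^{r_{1,\varepsilon}t}+e^{-t})\le Ce^{-t/2}$. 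This is the step your Lyapunov approach misses. For $v_1$ the paper is also much simpler than your expansion-plus-supersolution route: since $v,\alpha\ge 0$ one has $(v+\alpha)^{q_1}\ge v^{q_1}$, and Jensen's inequality with weight $\phi_1$ gives $y''+Ny'+y^{q_1}\le Ce^{-t}$ directly, without any prior information on $v_2$; the bound $y(t)\le Ct^{-(N-1)/2}$ then follows from the scalar ODE Lemma~\ref{lemma2.1}. In particular the paper treats $v_1$ first and $v_2$ afterwards, the reverse of your ordering.
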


\begin{proof}
Denoting by $\phi_1$ the first eigenfunction of $\Delta'$ with $\|\phi_1\|_{L^1} =1$, we have
$$
v_1(t,\sigma) = y(t) \phi_1(\sigma) \quad \text{where} \quad y(t)=\int\limits_{S^{N-1}_{+}} v(t,\gs)\phi_{1}(\gs) \, d\gs.
$$
Since $q = q_1$, equation \eqref{new1} becomes
\begin{equation}\label{4.8}
v_{tt} + N v_{t} + (N-1) v + \Gd'v + (v+\ga)^{q_1}= H,
\end{equation}
with $H$ defined in \eqref{new2}. Since $\alpha \geq 0$, we have
$(v+\ga)^{q_1} \geq v^{q_1}$. Thus,
$$
v_{tt} + N v_{t} + (N-1) v + \Gd'v + v^{q_1} \leq H.
$$
By Jensen's inequality, 
$$
y^{q_1} \leq \int\limits_{S_+^{N-1}} v^{q_1} \phi_1 \, d\sigma.
$$
Multiplying \eqref{4.8} by $\phi_1$ and integrating over $S^{N-1}_+$, we get
\begin{equation*}
y'' + Ny'+ y^{q_1} \leq \int\limits_{S^{N-1}_{+}} H \phi_{1} \, d\gs.
\end{equation*}
By Theorem~\ref{thm1.1}, $v$ is uniformly bounded in $\RR_+ \times S_+^{N-1}$. In particular, by \eqref{4.3} and Proposition~\ref{prop5.1} with $p =2$, we have for every $t \ge 0$,
\[
\int\limits_{S^{N-1}_{+}} H \phi_{1} \, d\gs \leq C \, \e^{-t}.
\]
Thus,
\begin{equation*}
y'' + Ny'+ y^{q_1} \leq C \, \e^{-t}.
\end{equation*}
Applying Lemma~\ref{lemma2.1} we deduce that
$$
y(t) \leq  C t^{- \frac{N-1}{2}} \quad \forall t > 0.
$$
This concludes the proof of the first estimate in \eqref{4.6}.

\medskip
In order to prove the estimate for $v_2$, let 
$$
Y(t)=\|v_2(t, \cdot)\|_{L^2(S^{N-1}_+)} \quad \forall t \geq 0.
$$
Since $v(t,\sigma) = y(t) \phi_1(\sigma) + v_2(t,\sigma)$, we have
$$
v_t = y_t \phi_1 + (v_2)_t \quad \text{and} \quad v_{tt} = y_{tt} \phi_1 + (v_2)_{tt}.
$$
Using the orthogonality between $\phi_1$ and $v_2$,
\begin{equation*}
Y Y_t = \int\limits_{S^{N-1}_{+}} v_2 (v_2)_{t} \,d\sigma = \int\limits_{S^{N-1}_{+}} v_2 \big[y_t \phi_1 + (v_2)_{t} \big] \,d\sigma = \int\limits_{S^{N-1}_{+}} v_2 v_t \, d\sigma.
\end{equation*}
From the first equality, we have
\[
|Y_t| \le \|v_2(t, \cdot)\|_{L^2}.
\]
One also shows that
\begin{equation*}
Y Y_{tt} \geq \int\limits_{S^{N-1}_{+}} v_2 v_{tt} \,d\sigma.
\end{equation*}
On the other hand, since the second eigenvalue of the Laplace-Beltrami operator $-\Gd'$ in $W^{1,2}_{0}(S^{N-1}_{+})$ is $2N$, 
\begin{equation*}
2N Y^2 \leq \int\limits_{S^{N-1}_{+}} |\nabla' v_2|^2 \, d\sigma = - \int\limits_{S^{N-1}_{+}} v_2 \, \Gd' v_2 \, d\gs = - \int\limits_{S^{N-1}_{+}} v_2 \, \Gd' v \, d\gs.
\end{equation*}
Multiply \eqref{4.8} by $v_2$ and integrate over $S_+^{N-1}$. As in the proof of Theorem~\ref{prop6.1}, for every $\eps >0$ there exists $t_1 > 0$ such that for every $t \ge t_1$,
\begin{equation*}
Y_{tt} + N Y_{t} - (N+1-\eps) Y \geq - C \, \e^{-t}.
\end{equation*}
Note that for $\eps > 0$ small the linear equation
\[
Z_{tt} + N Z_{t} - (N+1-\eps) Z = 0
\]
has two linearly independent solutions $Z_{1, \eps}$ and $Z_{2, \eps}$ such that
\[
Z_{1, \eps}(t) = \e^{r_{1, \eps}t} \quad \text{and} \quad Z_{2, \eps}(t) = \e^{r_{2, \eps}t} 
\]
with
\[
r_{1, \eps} \le -\frac{1}{2} \quad \text{and} \quad r_{2, \eps} > 0.
\]
Since $Y(t) \to 0$ as $t \to +\infty$, applying the maximum principle one deduces that
\[
Y(t) \le C (Z_{1, \eps}(t) + \e^{-t}).
\]
In particular,
\[
Y(t) \le C \e^{-\frac{t}{2}}.
\]
This gives the estimate for $v_2$.
\end{proof}

\begin{proof}[Proof of Theorem~\ref{prop4.1}]
By Lemma~\ref{lemma4.1} above, we have 
$$
\|v(t, \cdot)\|_{L^2} \leq C \, t^{- \frac{N - 1}{2}} \quad \forall t > 0.
$$
Inserting this estimate into estimate \eqref{5.3} for some $p > \frac{N}{2}$ the result follows.
\end{proof}

\begin{proof}[Proof of Theorem~\ref{thm1.2a}]
By Theorem~\ref{prop4.1}, the function $w : [0, +\infty) \to \RR$ given by
\[
w(t, \sigma) = t^{\frac{N-1}{2}} v(t, \sigma)
\]
is bounded. By a straightforward computation, $w$ satisfies
\begin{multline}\label{t2}
w_{tt} + \left(N - \tfrac{N-1}{t} \right) w_{t} +\left(N-1+\tfrac{N^2-1}{4t^2}\right)w +\Gd'w + \\
+ \myfrac{1}{t}\left(w^{q_{1}} - \tfrac{N(N-1)}{2} w \right) = t^{\frac{N-1}{2}}H,
\end{multline}
where $H$ is given by \eqref{new2}. Let $\phi : S_+^{N-1} \to \RR$ be the function defined by $\phi(\sigma) = \frac{\sigma_N}{|\sigma|}$; we recall that $\phi$ is an eigenfunction of $-\Delta'$ in $W_0^{1,2}(S_+^{N-1})$ associated to the first eigenvalue $N-1$. Let 
 $$
z(t)=\int\limits_{S^{N-1}_{+}} w(t,\gs) \gf(\gs) \, d\gs \quad \forall t \ge 0.
$$
Multiplying \eqref{t2} by $\phi$ and integrating over $S_+^{N-1}$, we obtain the following equation satisfied by $z$:
\[
z_{tt} +\left(N-\tfrac{N-1}{t}\right) z_t + \tfrac{N^2 - 1}{4 t^2} z + \frac{1}{t} \int\limits_{S_+^{N-1}} w^{q_1} \phi \, d\sigma - \tfrac{N(N-1)}{2t} z = t^{\frac{N-1}{2}} \int\limits_{S_+^{N-1}} H\phi \, d\sigma.
\]
Thus,
\begin{equation*}
z_{tt} +\left(N-\tfrac{N-1}{t}\right) z_t
+\frac{1}{t}\left(\theta z^{q_{1}}- \tfrac{N(N-1)}{2} z\right)=\Psi,
\end{equation*}
where
$$
\theta = \int\limits_{S^{N-1}_{+}} \gf^{q_{1}+1} \, d\gs
$$
and
$$
\Psi=t^{\frac{N-1}{2} }\int\limits_{S^{N-1}_{+}}{} H\gf \, d\gs-\tfrac{N^2-1}{4t^2} z + \frac{1}{t} \int\limits_{S^{N-1}_{+}} \big[ (z\gf)^{q_{1}} -  w^{q_{1}}\big]\gf \, d\gs.
$$
By Lemma~\ref{lemma4.1}, we have
\begin{equation}\label{7.1x}
\| z(t) \phi - w(t, \cdot)\|_{L^2} \leq C \, t^{\frac{N-1}{2}} \e^{- \frac{t}{2}}.
\end{equation}
Since
\begin{equation*}
\abs{(z \gf)^{q_{1}} - w^{q_{1}} } \leq q_{1}|z\phi - w | \big[ (z\gf)^{q_{1}-1} + w^{q_1 -1} \big],
\end{equation*}
$z$ is bounded in $\RR_+$ and $w$ is bounded in $\RR_+ \times S_+^{N-1}$,
\[
\begin{split}
\int\limits_{S^{N-1}_{+}} \big| (z\gf)^{q_{1}} -  w^{q_{1}}\big|\gf \, d\gs 
& \le \|z\phi - w\|_{L^2} \, \Big[	z^{q_1 - 1} \|\phi^{q_1 -1}\|_{L^2} + \|w^{q_1 -1}\|_{L^2} \Big]\\
& \le C \, t^{\frac{N-1}{2}} \e^{- \frac{t}{2}}.
\end{split}
\]
By Proposition~\ref{prop4.1}, \eqref{4.3} and Proposition~\ref{prop5.1} with $p=2$, 
\[
\|H(t, \cdot)\|_{L^2} \leq C \, t^{- \frac{N-1}{2}} \e^{-t}.
\]
Thus,
$$
\norm{\Psi(t,.)}_{L^\infty}\leq C \Big(\e^{-t} + t^{-2} + t^{\frac{N-3}{2}} \e^{- \frac{t}{2}}  \Big) \le \tilde C \, t^{-2}.
$$ 
By a straightforward modification of the end of the proof of \cite[Corollary~4.2]{BidRao:96}, $z$ admits a limit $\kappa\geq 0$ when $t\to +\infty$, where $\kappa$ satisfies
\begin{equation*}
\theta\kappa^{q_1}-\tfrac{N(N-1)}{2}\kappa=0.
\end{equation*}
Therefore, either $\kappa = 0$ or $\kappa = \left(\tfrac{N(N-1)}{2\theta}\right)^{\frac{N-1}{2}}$.\\
By \eqref{7.1x} we deduce that, as $t \to +\infty$, 
$$
t^{\frac{N-1}{2}} v(t,\cdot) \to \kappa \phi \quad \text{in } L^2(S^{N-1}_+).
$$
By Proposition~\ref{prop5.1} with $p > \frac{N}{2}$ and Morrey's estimates, we conclude that 
$$
t^{\frac{N-1}{2}} v(t,\cdot) \to \kappa \phi \quad \text{uniformly in } S^{N-1}_+.
$$
Rewriting the convergence in terms of $u$, we conclude that either \eqref{E6a} holds or
\begin{equation}\label{7a.1}
|x|^{N-1} \big(\log{\tfrac{1}{|x|}} \big)^{\frac{N-1}{2}} u(x) \to 0 \text{as $x \to 0$.}
\end{equation}
If \eqref{7a.1} holds, then $u$ must be continuous in view of Theorem~\ref{prop6.2}.
\end{proof}


\appendix
\section{Some \textsc{ode} lemmas}\label{sec2}

We gather in this section a couple of \textsc{ode} results which are used in this paper. These results are presumably well-known to specialists:

\begin{lemma}\label{lemma2.1}
Given $T > 0$, let $y \in C^2([T, +\infty))$ be a nonnegative function such that
\begin{equation*}
\left\{
\begin{aligned}
& y_{tt} + a y_t + by^q \leq c\, \e^{-t} \quad \text{in } (T, +\infty),\\
& \lim_{t \to +\infty}{y(t)} = 0,
\end{aligned}
\right.
\end{equation*}
where $q, a > 1$ and $b, c > 0$. Then, there exists $C > 0$ such that
\begin{equation}\label{2.2}
0 \leq y(t) \leq C \, t^{-\frac{1}{q-1}} \quad \forall t \geq T.
\end{equation}
\end{lemma}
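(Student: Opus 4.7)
The plan is to prove the estimate by constructing an explicit super-solution of the form $\phi(t) = C t^{-\frac{1}{q-1}}$ for a constant $C > 0$ sufficiently large (depending on $y(T)$, $y_t(T)$, and on $a, b, c, q$), and then to compare $y$ with $\phi$.

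First I would verify that $\phi$ is a super-solution. A direct computation gives
\[
\phi_{tt} + a\phi_t + b\phi^q \;=\; \frac{Cq}{(q-1)^2}\,t^{-\frac{2q-1}{q-1}} \;+\; C\Big(bC^{q-1} - \frac{a}{q-1}\Big)\,t^{-\frac{q}{q-1}}.
\]
Whenever $bC^{q-1} > a/(q-1)$, both terms on the right are positive. Because the second term decays only polynomially in $t$, while $c\e^{-t}$ decays exponentially, enlarging $C$ suitably ensures that $\phi_{tt} + a\phi_t + b\phi^q \geq c\e^{-t}$ on $[T, +\infty)$. A further enlargement of $C$ (now depending on $y(T)$) guarantees $\phi(T) = CT^{-1/(q-1)} \geq y(T)$.

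Setting $w = y - \phi$, we obtain
\[
w_{tt} + a w_t + b(y^q - \phi^q) \leq 0 \quad \text{on } (T, +\infty),
\]
together with $w(T) \leq 0$ and $\lim_{t \to +\infty} w(t) = 0$. The conclusion $y(t) \leq Ct^{-1/(q-1)}$ is equivalent to $w \leq 0$ on $[T, +\infty)$. I would obtain this by arguing on connected components of $\{w > 0\}$: on any such component $(\alpha,\beta) \subset [T,+\infty]$ one has $w(\alpha) = 0$, either $w(\beta) = 0$ or $\beta = +\infty$ with $w(t)\to 0$, and $y > \phi > 0$ in the interior; consequently $y^q - \phi^q > 0$ and hence $(e^{at}w_t)' \leq -be^{at}(y^q - \phi^q) < 0$, so $e^{at}w_t$ is strictly decreasing on $(\alpha,\beta)$. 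Combining this monotonicity with the convexity estimate $y^q - \phi^q \geq q\phi^{q-1}w$ (valid on $\{w>0\}$ since $q>1$ and $y>\phi>0$) and an integration by parts of the differential inequality against $w$, I would extract a quantitative estimate that, for $C$ taken sufficiently large (recall $\phi^{q-1}(t) = C^{q-1}t^{-1}$), rules out the existence of such a component.

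I expect the comparison step to be the main obstacle. A naive pointwise maximum principle fails here, because at an interior maximum $t^*$ of $w$ the necessary condition $w_{tt}(t^*) \leq 0$ is perfectly compatible with the differential inequality, which only forces $w_{tt}(t^*) \leq -b(y^q - \phi^q)(t^*) < 0$; the zeroth-order coefficient obtained by linearising $y^q - \phi^q$ has the wrong sign for the classical principle. The resolution must therefore exploit the global/integral structure of the inequality on a connected component of $\{w>0\}$, together with the boundary behaviour of $w$ at $\alpha$ and at $\beta$ (or at infinity), rather than any local pointwise comparison.
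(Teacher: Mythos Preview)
Your super-solution computation is correct: $\phi(t)=Ct^{-1/(q-1)}$ satisfies $\phi_{tt}+a\phi_t+b\phi^q\geq c\,\e^{-t}$ on $[T,\infty)$ for $C$ large, and one can arrange $\phi(T)\geq y(T)$. The gap is in the comparison step, and it is not merely a technical obstacle but a genuine failure of the strategy.

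You already note that the linearised zeroth-order coefficient has the wrong sign for the maximum principle. The integral argument you sketch does not repair this. On a component $(\alpha,\beta)$ of $\{w>0\}$ with $w(\alpha)=w(\beta)=0$, multiplying $w_{tt}+aw_t+b(y^q-\phi^q)\leq 0$ by $w$ and integrating gives, after the boundary terms drop and the convexity bound $y^q-\phi^q\geq q\phi^{q-1}w$ is used,
\[
bqC^{q-1}\int_\alpha^\beta t^{-1}w^2\,dt \;\leq\; \int_\alpha^\beta (w_t)^2\,dt.
\]
This is a Poincar\'e-type inequality, not a contradiction: for any value of $C$, it is satisfied by a bump supported on a sufficiently short interval. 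In fact enlarging $C$ pushes the linearised operator $\partial_t^2+a\partial_t+bqC^{q-1}t^{-1}$ \emph{further} into the oscillatory regime, making positive components of $w$ easier, not harder, to produce. So ``$C$ sufficiently large'' is exactly the wrong lever here. The same difficulty persists on an unbounded component $(\alpha,\infty)$; the monotonicity of $\e^{at}w_t$ is consistent with $w>0$ and $w\to 0$.

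The paper sidesteps the comparison issue entirely. It first absorbs the forcing by setting $z=y+A\e^{-t}$ (this is where $a>1$ is used) so that $z_{tt}+az_t+bz^q\leq 0$ for $t$ large, and then applies the Emden--Fowler substitution $W=z^{1-q}$. A direct calculation gives
\[
W_{tt}+aW_t \;\geq\; -(q-1)\,\frac{z_{tt}+az_t}{z^q}\;\geq\; b(q-1),
\]
so $(\e^{at}W_t)'\geq b(q-1)\e^{at}$, whence $W_t$ is eventually bounded below by a positive constant and $W(t)\gtrsim t$. This yields $z(t)\lesssim t^{-1/(q-1)}$ directly, with no comparison principle needed. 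The essential point is that the substitution $W=z^{1-q}$ converts the superlinear source term into a linear inhomogeneity with the favorable sign, precisely the mechanism your approach lacks.
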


\begin{proof}
Given $A > 0$, let
$$
z(t) = y(t) + A \e^{-t} \quad \forall t \geq T.
$$
Then, $z$ satisfies
$$
z_{tt} + a z_t + b z^q \leq \big[ c - (a-1) A \big] \e^{-t} + b (z^q - y^q).
$$
By convexity of the function $t  \in \RR_+ \mapsto t^q$, 
$$
y^q \geq z^q - q z^{q-1} A \e^{-t}.
$$
Thus,
\begin{equation}\label{2.3}
z_{tt} + a z_t + b z^q \leq \big[ c - (a-1 + bqz^{q-1}) A \big] \e^{-t}.
\end{equation}
Since $a > 1$ and $z(t) \to 0$ as $t \to \infty$, we can choose $T_1 > T$ and $A > 0$ sufficiently large so that the right-hand side of \eqref{2.3} is negative on $[T_1, \infty)$. Thus,
\begin{equation}\label{2.4}
z_{tt} + a z_t + b z^q \leq 0 \quad \text{in } [T_1, \infty).
\end{equation}
Let $w = z^{1-q}$. By a straightforward computation, we have
\begin{equation}\label{2.5}
w_{tt} + a w_t \geq - (q-1) \frac{z_{tt} + a z_t}{z^q}. 
\end{equation}
Combining \eqref{2.4}--\eqref{2.5}, we deduce that
$$
w_{tt} + a w_t \geq b(q-1) \quad \text{in } [T_1, \infty).
$$
The function $x = w_t$ satisfies
$$
x_t + a x \geq b(q-1) \quad \text{in } [T_1, \infty).
$$
Thus, taking $T_2 > T_1$ sufficiently large, 
$$
x(t) \geq \tfrac{b(q-1)}{a} + c_1 \e^{-at} \geq \tfrac{b(q-1)}{2 a} \quad \forall t \geq T_2.
$$
Since $w_t = x$, choosing $T_3 > T_2$ large enough, we then get
$$
w(t) \geq \tfrac{b(q-1)}{4 a} \, t \quad \forall t \geq T_3.
$$
Therefore,
$$
z(t) \leq \left(\tfrac{4 a}{b(q-1)} \, t^{-1}\right)^{\frac{1}{q-1}}  
\quad \forall t \geq T_3.
$$
We can now enlarge the constant in the right-hand side so that this estimate holds for every $t \geq T$.
This immediately implies \eqref{2.2}.
\end{proof}

\begin{lemma}\label{lemma2.3}
Let $a, a_1, b, b_1 \in \RR$ with $a \ne 0$. Then, the equation
\begin{equation*}
y_{tt} + \big( a - \tfrac{a_1}{t} \big) y_{t} + \tfrac{1}{t} \big( b - \tfrac{b_1}{t} \big) y = 0 \quad \text{in } (0, +\infty),
\end{equation*}
has two linearly independent solutions $y_1$ and $y_2$ such that
\begin{equation*}
y_1(t) = t^{a_1 + \frac{b}{a}} \e^{-at} (1 + o(1)) \quad \text{and} \quad y_2(t) = t^{-\frac{b}{a}} (1 + o(1))
\end{equation*}
for $t$ sufficiently large.
\end{lemma}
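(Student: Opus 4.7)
The plan is to view the equation as an asymptotic perturbation of the limiting constant-coefficient equation $y_{tt} + a y_t = 0$, whose solutions span $\{1, \e^{-at}\}$, and to exhibit each prescribed solution by an explicit substitution that absorbs the polynomial correction, followed by a contraction-mapping argument at infinity. The correct exponents can be guessed from a WKB ansatz $y \sim t^{\gamma} \e^{-\lambda t}$: inserting into the equation and keeping the two leading orders in $1/t$ forces $\lambda(\lambda - a) = 0$, and matching the $1/t$ terms then pins down $\gamma = -b/a$ when $\lambda = 0$ and $\gamma = a_1 + b/a$ when $\lambda = a$.

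To justify the first asymptotics I would set $y_2(t) = t^{-b/a} u(t)$. A direct computation shows that $u$ satisfies
\[
u_{tt} + \Big(a - \tfrac{a_1 + 2b/a}{t}\Big)u_t + \tfrac{c}{t^2}\, u = 0,
\]
where $c = (b/a)^2 + (1+a_1)(b/a) - b_1$; the crucial cancellation, namely the vanishing of the $u/t$ coefficient (which would otherwise equal $a\mu + b$ at $\mu = -b/a$), is exactly what forces the choice of exponent. Likewise, setting $y_1(t) = t^{a_1+b/a}\e^{-at} v(t)$ gives an equation for $v$ of the form
\[
v_{tt} + \Big(-a + \tfrac{\tilde c_1}{t}\Big) v_t + \tfrac{\tilde c_2}{t^2} v = 0,
\]
a $1/t$-perturbation of $v_{tt} - a v_t = 0$, whose bounded homogeneous mode is the constant $1$.

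It remains in each case to construct a solution tending to $1$ at infinity. Both auxiliary equations have the form $w_{tt} + \alpha w_t = g(t)$ with $\alpha \ne 0$ and $|g| = O(1/t^2)$ once $w$ is restricted to a neighbourhood of the constant solution. Applying variation of parameters with the fundamental pair $\{1, \e^{-\alpha t}\}$ and choosing the constants of integration so that the non-decaying mode corresponds to data prescribed at $+\infty$, I would reformulate the problem as a Volterra integral equation and solve it by Banach's fixed-point theorem on the complete metric space
\[
X_T = \Big\{ w \in C^1([T,+\infty)) \ : \ \sup_{t \ge T}\bigl(|w(t) - 1| + t\,|w_t(t)|\bigr) \le 1 \Big\},
\]
endowed with the obvious norm. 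The $1/t^2$ size of the forcing forces the Lipschitz constant to be $O(1/T)$, so for $T$ large enough the map is a contraction and yields a unique fixed point $w$ with $w(t) \to 1$ and $t\, w_t(t) \to 0$.

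Linear independence of $y_1$ and $y_2$ is then automatic, since $y_1/y_2 \sim t^{a_1 + 2b/a}\e^{-at} \to 0$. The main (and essentially only) technical step is the contraction estimate on $X_T$; this is a special case of Levinson's classical theorem on asymptotic integration of linear second-order ODEs, and the smallness of the perturbation (at most $1/t$ in the first-order coefficient and $1/t^2$ in the zeroth-order coefficient) renders every estimate elementary.
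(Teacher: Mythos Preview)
Your argument is correct and can be carried through as written; the substitutions are computed accurately, the $1/t$ term in the zeroth-order coefficient does vanish in each case, and the contraction step on $X_T$ is a routine instance of Levinson-type asymptotic integration. One small imprecision: the ratio $y_1/y_2 \sim t^{a_1+2b/a}\e^{-at}$ tends to $0$ only when $a>0$; for $a<0$ it tends to $+\infty$. Either way the ratio is nonconstant, so linear independence follows, but the phrasing should be adjusted.

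The paper's proof takes a different and considerably shorter route. Instead of two tailored substitutions, it performs a single Liouville-type change of unknown $z(t)=\e^{at/2}\,t^{-a_1/2}\,y(t)$, which eliminates the first-order term entirely and reduces the equation to the normal form
\[
z_{tt}-\Big(\tfrac{a^2}{4}-\tfrac{A_1}{t}+\tfrac{A_2}{t^2}\Big)z=0,
\]
with $A_1=b+\tfrac{a a_1}{2}$. For equations of this shape the asymptotics $z_{1,2}(t)=\e^{\mp at/2}\,t^{\pm A_1/a}(1+o(1))$ are classical and are simply cited from Bellman's book; translating back to $y$ recovers the stated exponents. So the paper outsources precisely the fixed-point argument you carry out by hand. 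Your approach is self-contained and makes transparent why the exponents $-b/a$ and $a_1+b/a$ arise (they are exactly the roots of the secular equation $a\mu+b=0$ and $-a\nu+aa_1+b=0$), whereas the paper's version is quicker but relies on an external reference for the key asymptotic step.
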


\begin{proof}
Let 
$$
z(t) =  \e^{\frac{at}{2}} \, t^{- \frac{a_1}{2}} y(t).
$$
Then, $z$ satisfies the equation
\begin{equation*}
z_{tt} - \big( \tfrac{a^2}{4} - \tfrac{A_1}{t} + \tfrac{A_2}{t^2} \big) z = 0,
\end{equation*}
where $A_1 = b + \frac{aa_1}{2}$ and $A_2 = b_1 + \frac{a_1}{2} + \frac{a_1^2}{4}$.
By \cite[pp.~126--127]{Bel:53}, the equation satisfied by $z$ has two linearly independent solutions with the following asymptotic behaviors as $t \to +\infty$:
$$
z_1(t) = \e^{-\frac{at}{2}} \, t^{ \frac{A_1}{a}} ( 1 + o(1)) \quad \text{and} \quad z_2(t) = \e^{\frac{at}{2}} \, t^{- \frac{A_1}{a}} ( 1 + o(1) ).
$$
Rewriting these formulas in terms of the function $y$, the result follows.
\end{proof}






\end{document}